\newcommand{\C}{\mathbb{C}}
\newcommand{\EE}{\mathbb{E}}
\newcommand{\cF}{\mathcal{F}}
\newcommand{\cH}{\mathcal{H}}
\newcommand{\cQ}{\mathcal{Q}}
\newcommand{\N}{\mathbb{N}}
\newcommand{\PP}{\mathbb{P}}
\newcommand{\R}{\mathbb{R}}
\newcommand{\RR}{\mathcal{R}}
\newcommand{\T}{\mathbb{T}}
\DeclareMathOperator{\re}{Re}
\DeclareMathOperator{\res}{Res}
\renewcommand{\epsilon}{\varepsilon}
\renewcommand{\setminus}{\smallsetminus}
\newcommand{\one}{\bm{1}}
\newcommand{\set}[1]{\left\{#1\right\}}
\newcommand{\pa}[1]{\left(#1\right)}
\newcommand{\bra}[1]{\left[#1\right]}
\newcommand{\abs}[1]{\left|#1\right|}
\newcommand{\brak}[1]{\left\langle#1\right\rangle}
\newcommand{\gamhalf}[1]{\Gamma\left(\frac{#1}{2}\right)}
\newtheorem{theorem}{Theorem}[section]
\newtheorem{definition}[theorem]{Definition}
\newtheorem{lemma}[theorem]{Lemma}
\newtheorem{proposition}[theorem]{Proposition}
\theoremstyle{remark}
\newtheorem{remark}[theorem]{Remark}
\numberwithin{equation}{section}
\newcommand{\dd}{\,\mathrm{d}}
\newcommand{\eps}{\varepsilon}
\newenvironment{acknowledgements}{%
  % Rename Abstract to Acknowledgements
  \begin{abstract}
}{%
  \end{abstract}
}
\title[Anomalous Regularization in Kraichnan's Model]{Anomalous Regularization in Kraichnan's Passive Scalar Model}
\author[L. Galeati]{Lucio Galeati}
\address{Università degli studi dell'Aquila, Dipartimento di Ingegneria e Scienze dell'Informazione e Matematica, Via Vetoio, 67100 L'Aquila, Italy.}
\email{lucio.galeati at univaq.it}
\author[F. Grotto]{Francesco Grotto}
\address{Università di Pisa, Dipartimento di Matematica, Largo Bruno Pontecorvo 5, 56127 Pisa, Italy.}
\email{francesco.grotto at unipi.it}
\author[M. Maurelli]{Mario Maurelli}
\address{Università di Pisa, Dipartimento di Matematica, Largo Bruno Pontecorvo 5, 56127 Pisa, Italy.}
\email{mario.maurelli at unipi.it}
\keywords{passive scalar transport, scalar turbulence, Kraichnan's model, anomalous regularization}
\date\today
\begin{document}

\begin{abstract}
We consider the advection of a passive scalar by a divergence free random Gaussian field, white in time and H\"older regular in space (rough Kraichnan's model), a well-established synthetic model of passive scalar turbulence.
By studying the evolution of negative Sobolev norms, we show an anomalous regularization effect induced by the dynamics: distributional initial conditions immediately become functions of positive Sobolev regularity.\\[1ex]
% We show that the advection of a passive scalar by a random velocity field can induce the dissipation of multiscale norms even in the absence of diffusion, and they can be used to gauge mixing and regularizing properties of stochastic transport. We consider the advection by a $d$-dimensional Gaussian vector field, prescribing a power-law spectrum for its covariance (\emph{Kraichnan's model}), and establish quantitative estimates for the evolution of negative Sobolev norms of passive scalars, therefore obtaining new insight on regularization by transport-type noise.
%
\noindent \textbf{MSC (2020):} 76F55, 76M35, 76F25, 35R36.

\end{abstract}

\maketitle

%%%%%%%%%%%%%%%%%%%%%%%%%%%%%%%%%%%%%%%%%%%%%

\section{Introduction}

The study of passive scalar fields $\rho^\nu$ advected by an incompressible turbulent field $u$,
\begin{equation}\label{eq:intro_passive_scalar}
    \partial_t \rho^\nu(t,x)+ u(t,x)\cdot \nabla \rho^\nu(t,x)=\nu \Delta \rho^\nu(t,x),
\end{equation}
is of fundamental importance in Physics and engineering applications and dates back to the classical works of Obukhov \cite{Obukhov1949}, Yaglom \cite{Yaglom1949}, Corrsin \cite{Corrsin1951}, Batchelor \cite{Batchelor1959I,Batchelor1959II}.
Although simpler in nature, the dynamics of passive scalars $\rho^\nu$ shares many phenomenological parallels with the nonlinear turbulent flow of the velocity field $u$, to the point where one can legitimately talk of \emph{scalar turbulence} \cite{shraiman2000scalar,sreenivasan2019turbulent}.
In particular, $\rho^\nu$ is expected to exhibit turbulent mixing, intermittency and most importantly \emph{anomalous dissipation} of energy, in the sense that
\begin{equation}\label{eq:intro_anomalous_dissipation}
    \liminf_{\nu\to 0^+} \int_0^\tau \nu \| \nabla\rho^\nu_t\|_{L^2}^2 \dd t > 0 \quad \forall\, \tau>0.
\end{equation}
Condition \eqref{eq:intro_anomalous_dissipation} necessarily implies, upon passing to the limit in weak topologies, the existence of solutions $\rho$ to the inviscid PDE
\begin{equation}\label{eq:intro_inviscid_PDE}
    \partial_t \rho(t,x)+ u(t,x)\cdot \nabla \rho(t,x)=0
\end{equation}
for which energy is not preserved, i.e. $\| \rho_\tau\|_{L^2} < \| \rho_0\|_{L^2}$ for $\tau>0$, contrary to what a naive integration by parts would suggest.
In particular, the validity of \eqref{eq:intro_anomalous_dissipation} requires some roughness of both $u$ and $\rho$: otherwise, the solution to \eqref{eq:intro_inviscid_PDE} could be represented as
\begin{equation}\label{eq:intro_Lagrangian_formula}
    \rho(t,x) = \rho_0(\Phi^{-1}(t,x))
\end{equation}
where $\Phi^{-1}(t,x)$ is the backward flow associated to the ODE $\dot x_t = u(t,x_t)$. In this perspective, anomalous dissipation requires failure of renormalizability of $\rho$ in the sense of DiPerna--Lions \cite{DiPLio1989}, lack of $W^{1,p}$-regularity for $u$ and possibly absence of a canonical backward solution flow associated to $\dot x_t = u(t,x_t)$.

Due to all these pathological behaviors, verifying \eqref{eq:intro_anomalous_dissipation} in cases of interest remains an extremely hard task. 
There has been however major progress in the mathematical community in recent years, with several explicit constructions of velocity fields $u$ proposed.
We mention the works \cite{DEIJ2022,CoCrSo2023,elgindi2023norm} for examples based on alternating shear flows, the recent breakthrough \cite{armstrong2023anomalous} based on fractal homogenization and the subsequent improvement \cite{burczak2023anomalous} using convex integration.

On the other hand, if one allows for randomly sampled velocity fields $u$, there is a relatively simple, yet surprisingly rich, class of models showing anomalous dissipation.
%all the aforementioned properties.
In his seminal work \cite{Kraichnan1968} in 1968,
Kraichnan considered a random Gaussian velocity field exhibiting rapid fluctuations in time, and evaluated 3D energy spectra by means of Lagrangian-History Direct Interaction (LHDI) closure approximation.
Subsequent works \cite{kraichnan1994,BeGaKu1998,MajKra1999,FaGaVe2001} popularized the advection by noise of transport type, delta-correlated in time and colored in space, as \emph{Kraichnan's model} of passive scalar turbulence \cite{shraiman2000scalar}.

%Let us present more rigorous the mathematical framework for the resulting stochastic PDE.
We consider advection of scalars in $\R^d$, $d\geq 2$, by a Gaussian velocity field $\dot W=\dot W(t,x)$ which is white in time and divergence-free, homogeneous and isotropic in space; that is, we take $W$ to be the centered $\R^d$-valued Gaussian process with covariance
\begin{equation}\label{eq:covariance_W}
    \EE[W(t,x)\otimes W(s,y)]=(t\wedge s) Q(x-y),
\end{equation}
with the covariance tensor $Q$ having a power-law spectrum: for some parameter $\alpha\in (0,+\infty)$, we set
\begin{equation}\label{eq:kraichnan_noise}
    Q(z)_{jk}= \frac{1}{(2\pi)^{d/2}}\int_{\R^d} \pa{\delta_{jk}-\frac{\xi_j \xi_k}{|\xi|^2}}  \frac{e^{i\xi\cdot z}}{(1+|\xi|^2)^{d/2+\alpha}}  \dd \xi.
\end{equation}
In the inviscid case, the evolution of the scalar field $\rho$ is thus formally given by the stochastic PDE (compare with \eqref{eq:intro_inviscid_PDE})
\begin{equation}\label{eq:intro_kraichnan}
    \dd \rho(t,x) + \circ \dd W(t,x)\cdot \nabla \rho(t,x) = 0,
\end{equation}
interpreted - at least for regular solutions $\rho$ - in its integral version with Stratonovich stochastic integration in time.
The value $\alpha$ in \eqref{eq:kraichnan_noise} determines sharply the space regularity of $W$, which can be shown to be almost $C^\alpha$ and nowhere better.

As nicely illustrated in the lecture notes \cite{CaFaGa2008}, the Kraichnan model exhibits a phase transition depending on the value of $\alpha$:
\begin{itemize}
    \item[i)] For $\alpha>1$ (\emph{smooth Kraichnan}), $W$ is spatially Lipschitz and admits a unique underlying stochastic flow $\Phi(t,x,\omega)$.
    Solutions to \eqref{eq:intro_kraichnan} can be represented by $\rho(t,x)=\rho_0(\Phi^{-1}(t,x,\omega))$, are renormalized and conserve all $L^p$-norms.
    Lyapunov exponents are deterministic and can be explicitly computed, see \cite{BaxHar1986,LeJan1985}, and the top one is strictly positive, implying chaotic behavior.
    On compact manifolds, the stochastic flow is an exponential mixer \cite{DoKaKo2004,GesYar2021}.
    \item [ii)] For $\alpha\in (0,1)$ (\emph{rough Kraichnan}), solutions to the SPDE \eqref{eq:intro_kraichnan} exhibit anomalous dissipation and intermittency \cite{BeGaKu1998}. The underlying SDE now allows for Lagrangian particle splitting (\emph{spontaneous stochasticity}) and the top Lyapunov exponent is formally $+\infty$ (\emph{hyperinstability}). The dynamics is not described by a flow of maps anymore, rather a flow of Markovian kernels \cite{LeJRai2002}, which retains a diffusive behavior, in the sense that, formally at least, initial conditions $\rho_0=\delta_x$ become $L^1$-valued at positive times.
\end{itemize}

Despite this counterintuitive behavior, it is worth noting that in the regime $\alpha\in (0,1)$ existence and uniqueness of solutions to \eqref{eq:intro_kraichnan} still hold, which are moreover recovered in the vanishing viscosity limit.
The other main known example of a PDE displaying all these features (uniqueness of vanishing viscosity limit, anomalous dissipation, intermittency) is the 1D Burgers equation, see \cite{DriEyi2015} and references therein.

The rough Kraichnan model has been the source of fundamental new concepts in turbulence theory, such as Lagrangian slow modes and spontaneous stochasticity; the latter has then been understood to be deeply related to anomalous dissipation \eqref{eq:intro_anomalous_dissipation} in a much more general setting, see \cite{DriEyi2017}. For $\alpha=2/3$, it reproduces the Richardson-Kolmogorov scaling of energy cascade (cf. \cite[pp. 23,84]{CaFaGa2008}), as well as predictions from the Obukhov-Corrsin theory of passive scalar turbulence.
Despite its simplicity, it remains a great tool to develop and test new ideas in isotropic turbulence.\\

In this work, by studying the evolution of multiscale norms of solutions $\rho(t,x)$ to \eqref{eq:intro_kraichnan}, we observe the remarkable phenomenon that turbulent random advection can induce a regularizing effect on the advected solution, even in the absence of viscosity.
Our main findings can be loosely stated as follows (see \cref{thm:main1} and \cref{thm:main2} for the precise mathematical results):

\begin{theorem}[Informal statement]\label{thm:main_informal}
    Let $\alpha\in (0,1)$ fixed. For any $s\in (0,d/2)$, there exist positive constants $K$, $C$ such that
	% \begin{equation*}
	% 	\sup_{t\in [0,T]} \EE\big[\|\rho(t,\cdot) \|_{\dot H^{-s}}^2 \big] + K\, \EE\bigg[ \int_0^T \|\rho(t,\cdot) \|_{\dot H^{-s+1-\alpha}}^2 \dd t\bigg] \leq 2 e^{CT}\,  \EE\big[\|\rho_0 \|_{\dot H^{-s}}^2\big].
	% \end{equation*}
    \begin{equation}\label{eq:intro_main_thm}
		\frac{\dd}{\dd t} \EE\big[\|\rho(t,\cdot) \|_{\dot H^{-s}}^2 \big] + K\, \EE\big[ \|\rho(t,\cdot) \|_{\dot H^{-s+1-\alpha}}^2 \big] \leq C\,  \EE\big[\|\rho(t,\cdot) \|_{\dot H^{-s}}^2\big]\quad \forall\, t\geq 0.
	\end{equation}
    for all solutions $\rho$ to \eqref{eq:intro_kraichnan}.
    In particular:
    \begin{itemize}
        \item[a)] Solutions associated to initial data $\rho_0\in L^2$ gain positive Sobolev regularity and $\PP$-a.s. belong to $L^2([0,T];H^{1-\alpha-})$.
        \item[b)] Solutions associated to initial datum $\rho_0$ in negative Sobolev spaces $\dot H^{-s}$, $0<s<d/2$, are still well-defined and instantaneously become $L^2$-regular at $t>0$; by Part a), such generalized solutions $\PP$-a.s. belong to $L^2([\delta,T];H^{1-\alpha-})$ for any $\delta>0$. 
    \end{itemize}
\end{theorem}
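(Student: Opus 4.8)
The plan is to derive a closed evolution equation for the spectral density of the second moment of $\rho$ and then to reduce the claimed inequality \eqref{eq:intro_main_thm} to a single pointwise estimate on a nonlocal Fourier symbol. First I would pass from the Stratonovich formulation \eqref{eq:intro_kraichnan} to its Itô form. Writing $W=\sum_k\sigma_k\beta^k$ with $\sum_k\sigma_k(x)\otimes\sigma_k(y)=Q(x-y)$, the Stratonovich--Itô corrector collapses to a constant-coefficient Laplacian: since $Q$ is divergence-free and isotropic, $Q_{jk}(0)=c_0\delta_{jk}$ and the first-order part of the corrector drops, leaving $\dd\rho=-\sigma_k\cdot\nabla\rho\,\dd\beta^k+\tfrac{c_0}{2}\Delta\rho\,\dd t$ with $c_0>0$. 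Applying Itô's formula to $\norm{\rho}_{\dot H^{-s}}^2=\norm{\Lambda^{-s}\rho}_{L^2}^2$ and taking expectations kills the martingale part and yields
\begin{equation*}
  \frac{\dd}{\dd t}\EE\big[\norm{\rho}_{\dot H^{-s}}^2\big]=-c_0\,\EE\big[\norm{\rho}_{\dot H^{1-s}}^2\big]+\sum_k\EE\big[\norm{\Lambda^{-s}(\sigma_k\cdot\nabla\rho)}_{L^2}^2\big].
\end{equation*}
This step is formal for $\rho$ merely in $\dot H^{-s}$; I would make it rigorous on the viscous/mollified approximations producing the vanishing-viscosity solutions and then pass to the limit. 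The first term is the regularizing drift and the second the positive Itô correction; the subtlety is that both are individually of order $\dot H^{1-s}$, so the naive heat-type gain to $\dot H^{1-s}$ is destroyed and only their difference, of the weaker order $\dot H^{-s+1-\alpha}$, survives.

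To exhibit this cancellation I would move to Fourier variables, setting $g(t,\xi):=\EE|\hat\rho(t,\xi)|^2\ge0$, so that $\EE\norm{\rho}_{\dot H^{-s}}^2=\int|\xi|^{-2s}g(t,\xi)\,\dd\xi$. Expanding the Itô correction through $\sum_k\sigma_k(x)\otimes\sigma_k(y)=Q(x-y)$ and inserting the explicit spectrum \eqref{eq:kraichnan_noise} turns it into a convolution against $\hat Q_{jk}$; combining with the drift, swapping the order of integration, and symmetrizing $\zeta\mapsto-\zeta$ (both the divergence-free projector and the kernel are even), I expect to arrive at
\begin{equation*}
  \frac{\dd}{\dd t}\EE\big[\norm{\rho}_{\dot H^{-s}}^2\big]=\int_{\R^d}g(t,\xi')\,\mathsf{b}(\xi')\,\dd\xi',
\end{equation*}
\begin{equation*}
  \mathsf{b}(\xi')=c\int_{\R^d}\pa{|\xi'|^2-\frac{(\zeta\cdot\xi')^2}{|\zeta|^2}}\,\frac{|\xi'+\zeta|^{-2s}+|\xi'-\zeta|^{-2s}-2|\xi'|^{-2s}}{(1+|\zeta|^2)^{d/2+\alpha}}\,\dd\zeta.
\end{equation*}
The whole problem is thereby reduced to the pointwise bound $\mathsf{b}(\xi')\le-K|\xi'|^{2-2s-2\alpha}+C|\xi'|^{-2s}$, after which integrating against $g\ge0$ returns exactly \eqref{eq:intro_main_thm}, since $2-2s-2\alpha$ is the exponent of the $\dot H^{-s+1-\alpha}$ norm.

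The hard part is pinning down the sign and scaling of $\mathsf{b}$. For the scaling I would replace the kernel by its leading homogeneous part $|\zeta|^{-d-2\alpha}$; the rescaling $\zeta=|\xi'|\omega$ then pulls out the factor $|\xi'|^{2-2s-2\alpha}$ times a dimensionless integral $I$, while the difference between $(1+|\zeta|^2)^{-d/2-\alpha}$ and $|\zeta|^{-d-2\alpha}$ supplies the lower-order remainder $C|\xi'|^{-2s}$ (and a bounded contribution on $|\xi'|\lesssim1$). The decisive point is $I<0$: the second difference of $|\cdot|^{-2s}$ is positive in the radial direction (where $|\cdot|^{-2s}$ is convex) but negative in the tangential directions (where it is concave, since $(|\xi'|^2+|\zeta|^2)^{-s}<|\xi'|^{-2s}$), and the divergence-free weight $|\xi'|^2-(\zeta\cdot\xi')^2/|\zeta|^2=|\xi'|^2\sin^2\theta$ vanishes precisely along $\zeta\parallel\xi'$ while maximally weighting $\zeta\perp\xi'$; thus the incompressibility of the noise forces the concave, dissipative directions to dominate. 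Making this rigorous requires care with three competing singularities of $I$: integrability at $\zeta=0$ for all $\alpha\in(0,1)$ is recovered from the even second-difference structure, which gains two orders (needed once $\alpha\ge1/2$); integrability at $\zeta=\mp\xi'$ uses $2s<d$; and integrability at infinity uses $d+2\alpha>d$. This is exactly where the hypothesis $s\in(0,d/2)$ enters.

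Finally, once \eqref{eq:intro_main_thm} is established, conclusions (a) and (b) follow by a Grönwall/time-integration argument: integrating the differential inequality controls $\int_0^T\EE\norm{\rho}_{\dot H^{-s+1-\alpha}}^2\,\dd t$ and hence yields the $L^2([0,T];H^{1-\alpha-})$ membership in (a), while the instantaneous smoothing in (b) follows by first propagating the gain from $\dot H^{-s}$ up to $L^2$ at positive times and then invoking (a) from any $\delta>0$. I expect the genuine obstacle to be the strict negativity and uniform control of the constant in $I$, i.e. turning the heuristic that the solenoidal weight selects the concave directions of $|\cdot|^{-2s}$ into a quantitative lower bound $K>0$ valid for all $s\in(0,d/2)$ and $\alpha\in(0,1)$.
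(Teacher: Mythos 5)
Your overall architecture coincides with the paper's: pass to the It\^o form with corrector $\tfrac{c}{2}\Delta$, derive an exact Fourier identity $\frac{\dd}{\dd t}\EE\big[\|\rho_t\|_{\dot H^{-s}}^2\big]=\int F(\xi)\,\EE\big[|\hat\rho_t(\xi)|^2\big]\dd\xi$ for a deterministic flux symbol (your $\mathsf{b}$ is the $\zeta\mapsto-\zeta$ symmetrization of the paper's $F$ in \eqref{eq:functions_Fn}), reduce everything to the pointwise bound $F(\xi)\leq -K|\xi|^{2-2\alpha-2s}+C|\xi|^{-2s}$, and conclude (a), (b) by Gr\"onwall and a bootstrap. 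The rigorous justification of testing with the non-compactly-supported singular weight $|\xi|^{-2s}$ is also handled in the paper essentially as you propose for part of the range: via the viscous equation and the vanishing-viscosity limit (\cref{prop:identity_neg_sobolev_general}, \cref{prop:vanishing_viscosity_approximation}) when $s\in(0,1-\alpha]$, supplemented for $s>1-\alpha$ by a direct cutoff argument with a parity cancellation in the second-order Taylor remainder (\cref{prop:identity_neg_sobolev}).

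The genuine gap is exactly where you locate it yourself: the strict sign $I<0$, i.e.\ $K>0$, together with the two-term expansion of $F$ with a controlled remainder. Your convexity/concavity heuristic (the solenoidal weight $\sin^2\theta$ suppresses the radially convex direction of $|\cdot|^{-2s}$ and emphasizes the tangentially concave ones) identifies the mechanism but does not prove the sign: at intermediate angles the second difference of $|\cdot|^{-2s}$ changes sign and is still weighted positively, so the outcome is a genuinely quantitative competition, not a soft domination. The paper settles it in \cref{prop:highfreqasymp} by reducing $F$ to a one-dimensional integral $J(\lambda)=\int_0^\infty h(\lambda r)f(r)\dd r$, representing $J$ via the Mellin--Parseval formula, shifting the contour past the pole of $M[h,z]$ at $z=d+2\alpha$ (the poles of $M[f,1-z]$ at $z=d$ and $z=d+2$ producing, respectively, an exact cancellation with the term $G(\xi)$ and part of the $O(|\xi|^{-2s})$ remainder), and reading off $K_{d,\alpha,s}$ as the explicit product of Gamma factors \eqref{eq:formula_K_dalphas}, whose positivity is visible because $\Gamma(-\alpha)<0$ for $\alpha\in(0,1)$ while every other factor is positive. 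If you want a sign argument closer in spirit to your heuristic, \cref{app:formal_derivation} gives one in the self-similar case: there the physical-space flux kernel is $\mathcal H^s(z)=2(2s-d)(s+\alpha-1)|z|^{2s+2\alpha-2-d}$, which for $s+\alpha>1$ is a negative multiple of a Riesz kernel, and for $s+\alpha<1$, being even and mean-free, satisfies $\langle\mathcal H^s\ast\rho,\rho\rangle=-\tfrac12\iint\mathcal H^s(x-y)|\rho(x)-\rho(y)|^2\dd x\dd y\le 0$; but that computation is formal and does not by itself yield the uniform remainder bound needed for the non-self-similar noise \eqref{eq:kraichnan_noise}, which is why the Mellin machinery is the actual load-bearing step.
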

% Weak solutions, whose precise definition we defer to the subsequent Section, starting from an initial datum $\rho(t=0,x)$ that is a generalized function belonging to a Sobolev space of negative index $-s$, $0<s<d/2$, become weak solutions of positive Sobolev regularity at time $t>0$.

Roughly speaking, for fixed $\alpha\in (0,1)$, by \eqref{eq:intro_main_thm} there is a consistent regularity gain of order $1-\alpha$ across all negative Sobolev norms $\dot H^{-s}$ with $s\in (0,d/2)$. Such spaces are commonly used to gauge mixing \cite{MaMePe2005,Thiffeault2012} and to study the distribution of energy across different Fourier modes, in particular in order to identify cascade phenomena.

\cref{thm:main_informal} can be viewed as an \emph{anomalous regularization} result and the mechanism generating it is the same one responsible for anomalous dissipation \eqref{eq:intro_anomalous_dissipation}.
Indeed, in the case of regular transport equations, the dynamics is reversible and by \eqref{eq:intro_Lagrangian_formula} one has $\rho_0(x)=\rho(t,\Phi(t,x))$; in particular, for sufficiently regular $\Phi$, $\rho(t,\cdot)\in H^s$ if and only if $\rho_0\in H^s$.
In particular, turbulent flows display a symmetry breaking of time reversibility. 
We see again a nice parallel with Burgers' equation, where $L^\infty$ initial data become $BV$ regular at positive times by Oleinik's result \cite{DLOM2004}.
Such self-regularizing mechanisms are expected to hold in more complicated hydrodynamic turbulence, see the discussion in \cite{Drivas2022}; the Onsager threshold $C^{1/3}$ in turbulent solutions of the Euler equations, should not only be informative of limited regularity of the fluid, but also of the presence of internal alignment mechanisms that lead to its saturation.

\begin{remark}[Optimality]\label{rem:optimality}
    We expect \cref{thm:main_informal} to be sharp, in the following sense:
    \begin{itemize}
        \item[i)] Solutions cannot belong to $L^2_t H^{1-\alpha}$, so that we cannot expect a similar balance to be true for $s\leq 0$ (namely, when $\rho_0$ belongs to a space of positive Sobolev regularity). Indeed, a commutator argument shows that, if $\rho\in L^2_t H^{1-\alpha}$, then its $L^2$-norm must be preserved; this was originally communicated to us by T. Drivas and can now be found in \cite[Theorem 1.3 and Section 3.2]{DGP2025} (in a slightly sharper statement, which also covers Besov-type regularity). Since the SPDE \eqref{eq:intro_kraichnan} is known to display anomalous dissipation by the aforementioned references, regularity $L^2_t H^{1-\alpha}$ would yield a contradiction.
        \item[ii)] White noise is formally invariant under \eqref{eq:intro_kraichnan}; arguing as in \cite{FlaLuo2019,GroPec2022} (which treat more complicated nonlinear (S)PDEs), one can construct \emph{white noise solutions} to \eqref{eq:intro_kraichnan}. Such solutions have stationary marginals which belong to $H^{-d/2-}\setminus H^{-d/2}$, providing counterexamples to the same regularising effect for $s>d/2$, at least for this class of solutions. Note that this does not necessarily exclude the existence of strong solutions (obtained by Wiener chaoses) witnessing anomalous regularization; however, if that were the case, pathwise uniqueness and uniqueness in law of solutions couldn't hold, and the dynamics in such poor regularity spaces might be strongly ill-defined.
    \end{itemize}
    We do however expect the result to extend to the borderline value $s=d/2$, in agreement with \cite{coghi2023existence} which already dealt with the case $d=2,\,s=1$.
    Let us also mention that in the borderline case $s=0$, sharp borderline Besov regularity $\tilde L^2_{\omega,t} \tilde B^{1-\alpha}_{2,\infty}$ was recently established in \cite[Theorem 5.1 and Remark 5.2]{DGP2025}.
\end{remark}

It is worth mentioning that much of the Physics literature on Kraichnan's model has focused on the \emph{statistically self-similar} case, that is formally when $Q(0)-Q(z)$ has an exact power-law scaling, obtained by replacing $(1+|\xi|^2)^{d/2+\alpha}$ with $|\xi|^{d+2\alpha}$ in \eqref{eq:kraichnan_noise} (see eq. \eqref{eq:covariance_selfsimilar} below); see for instance \cite{EyinkXin1996,EyiXin2000,BeGaKu1998,MajKra1999,hakulinen2003}.
Additionally, these works sometimes make use of additional assumptions that are difficult to justify with mathematical rigor, or even unclear, such as the existence of invariant measures (equilibrium states) which are the unique long-time limit of all solutions, or initial data being sampled randomly and translation invariant (thus necessarily implying $\EE[\| \rho_0\|_{H^s}]=+\infty$ for all $s\in\R$).

In comparison, the rigorous mathematical literature on Kraichnan model is relatively small and concentrated in the early 2000's, most notably in the works by E-Vanden Eijnden \cite{EVan2000,EVan2001}, LeJan-Raimond \cite{LeJRai2002,LeJRai2004} and Eyink-Xin \cite{EyiXin1996}.
Among recent contributions, let us recall the (already mentioned) exponential mixing results from \cite{GesYar2021} for $\alpha>1$, the recent work \cite{zelati2023statistically} in the self-similar case $\alpha=1$ and finally \cite{Rowan2023}, proving asymptotic exponential decay of energy for rough Kraichnan on $\mathbb{T}^d$.\\

In the statistically self-similar case, even giving mathematical meaning to the SPDE \eqref{eq:intro_kraichnan} becomes non-obvious, as explained in \cref{sec:self-similar}.
We believe one of the contributions of this work is instead to keep all the analysis at the level of \eqref{eq:kraichnan_noise}, where a robust solution theory for the SPDE \eqref{eq:intro_kraichnan} is available, as reviewed in \cref{sec:preliminaries}. This prevents us from performing explicit computations based on self-similar Ans\"atze and requires to work with Mellin transforms, cf. \cref{sec:asymp}.
Nonetheless we note that, at least formally, in the self-similar case our main estimate \eqref{eq:intro_main_thm} actually becomes the exact equality
\begin{equation}\label{eq:intro_selfsimilar_balance}
	\frac{\dd}{\dd t}\EE\big[\|\rho(t,\cdot) \|_{\dot H^{-s}}^2 \big] + K\, \EE\big[ \|\rho(t,\cdot) \|_{\dot H^{-s+1-\alpha}}^2 \big] =0.
\end{equation}
A formal proof of \eqref{eq:intro_selfsimilar_balance} based on our main results will be presented in \cref{sec:self-similar}.\\

Besides the new insight on stochastic passive scalar advection,
the quantitative estimates of \cref{thm:main1} establish a framework for regularization of nonlinear SPDEs by transport noise,
as already initiated in recent contributions \cite{coghi2023existence,GalLuo2023,JiaLuo2024,bagnara2024regularization}; after the appearance of this manuscript, this direction has been further developed in \cite{JiaLuo2025a,JiaLuo2025b,bagnara2026refined}.
In particular, this work can be regarded as a follow up to \cite{coghi2023existence}, where the balance \eqref{eq:intro_main_thm} was first rigorously shown for $s=1$, $d=2$ and applied to prove well-posedness of the stochastic $2$D Euler equations with $L^p$-valued vorticity.

After the first version of this manuscript appeared, several further results concerning anomalous regularization have followed. Specifically, \cite{BGM2024} considers the Kasantzev-Kraichnan model of vector advection, for a suitable range of parameters $\alpha$ and negative Sobolev norms $H^{-s}$ (see \cite[Hypothesis 3.11 and Theorem 3.12]{BGM2024}); instead, \cite{CLP2025} considers a larger class of scalar transport equations with additional deterministic drift $b\cdot\nabla \rho \dd t$, proving well-posedness, anomalous regularization, and the existence of the dissipation measure, as well as characterizing the zero-noise limit and establishing an LDP for it.
\cite{DGP2025} extends the present results to the full weakly compressible regime $\eta>1-d/(4\alpha^2)$ (without assuming $W$ divergence-free) and provides several other quantitative estimates (see \cite[Theorem 1.3]{DGP2025}), as well as counterpart results concerning Richardson's law (\cite[Theorem 1.5]{DGP2025}); in the divergence-free case, it establishes sharp Besov regularity $\tilde L^2_{\omega,t} \tilde B^{1-\alpha}_{2,\infty}$ and a link between this regularity result and the structure of the anomalous dissipation measure (Theorem 1.2).
Finally, the work \cite{Rowan2025} provides a general framework to establish anomalous dissipation and anomalous regularization results on the torus $\mathbb{T}^d$ (\cite[Assumption 1.15 and Theorem 1.16]{Rowan2025}), resulting in upper and lower bounds for the associated invariant measures (\cite[Proposition 1.11 and Theorem 1.13]{Rowan2025}); the latter in particular provide a rigorous justification for \cref{rem:invariant_measures} below, in the non-self-similar setting.
Differently from all the previous references, on $\T^d$ one cannot exploit the isotropy of the noise covariance; the analysis from \cite{Rowan2025} instead relies on suitable weighted Poincaré inequalities in the lattice (\cite[Section 6]{Rowan2025}). 

\subsection*{Outline of the paper}
%After the necessary notation and preliminary results are introduced, we rigorously state in this Section the main results, \cref{thm:main1,thm:main2}. In \Cref{sec:negsobol} we give an equation for the evolution of the $\dot{H}^{-s}$ norm of $\rho$, which is governed by the flux function $F$. In \Cref{sec:asymp} we show the main bound on the flux function $F$ and conclude the proof of the main results.
We conclude this introduction by defining the main notation adopted in the paper.
In \cref{sec:preliminaries}, we provide all necessary preliminary results concerning SPDE \eqref{eq:intro_kraichnan}, in particular solution concepts, well-posedness results and vanishing viscosity approximations.
In \Cref{sec:negsobol} we derive exact identities for the evolution of the $\dot{H}^{-s}$-norms of solutions, which are governed by deterministic flux functions $F$ in Fourier variables. In \Cref{sec:asymp} we obtain pointwise bounds on the flux functions, allowing to complete the proofs of main results (\cref{thm:main1,thm:main2}) in \cref{subsec:mainproofs}.
In \cref{sec:self-similar}, we discuss some (formal) consequences of our results in the statistically self-similar case.
Finally in \cref{app:formal_derivation} we provide a short formal derivation of \eqref{eq:intro_selfsimilar_balance} working directly in real variables.

%%%%%%%%%%%%%%%%%%%%%%%%%%%%%%%%%%%

%----------------------------------------------
\subsection*{Notation}
Throughout the paper we adopt the following notation and conventions:
\begin{itemize}[leftmargin=6mm]
    \item We always work on $\R^d$ with $d\geq 2$. For $v\in\R^d$, $|v|$ denotes its Euclidean norm; whenever it doesn't cause confusion, we also adopt the Japanese brackets $\brak{v}=(1+|v|^2)^{1/2}$.
    \item For $v\in \R^d\setminus\{0\}$, we denote by 
\begin{align*}
    P^\perp_v = I_d - \frac{v}{|v|}\otimes\frac{v}{|v|}
\end{align*}
the orthogonal projection matrix on the subspace $v^\perp$,
where $I_d$ stands for the identity matrix on $\R^d$ and $(x\otimes y)_{ij}=x_iy_j$.
The projection $P^\perp_v$ enjoys several useful properties, in particular
\begin{align*}
    (P^\perp_v)^2=P^\perp_v, \quad P^\perp_{\lambda v} = P^\perp_v, \quad P^\perp_v v=0
\end{align*}
and $P^\perp_{Ov} O= O P^\perp_v$ for any isometry $O$. Moreover $|P^\perp_v x|^2 \leq |x|^2$ for every $x\in \R^d$ and
\begin{align*}
    |P_{\xi-\eta}^\perp \xi|^2
    = |P_{\xi-\eta}^\perp \eta|^2
    = |\eta|^2 - \frac{(\eta\cdot (\xi-\eta))^2}{|\xi-\eta|^2},\quad \forall\,\xi,\eta\in\R^d.
\end{align*}
    \item When working with functions on $\R^d$, the bracket $\langle f,g \rangle$ denotes the $L^2$ scalar product $\int_{\R^d} fg \dd x$, or more generally the duality pairing between Schwartz functions $\mathcal{S}(\R^d)$ and tempered distributions $\mathcal{S}'(\R^d)$.
    \item We choose the angular frequency definition of Fourier transform:
    \begin{equation*}
    \hat f(\xi)=(2\pi)^{-\frac{d}{2}}\int_{\R^d} f(x) e^{-i\xi\cdot x} \dd x,\quad
    f(x)=(2\pi)^{-\frac{d}{2}} \int_{\R^d} \hat f(\xi) e^{i\xi\cdot x} \dd\xi;  
\end{equation*}
in particular (denoting by $\overline{\hat{g}}$ the complex conjugate of $\hat{g}$)
\begin{equation}\label{eq:convention_fourier}
    \widehat{f\ast g} = (2\pi)^{\frac{d}{2}} \hat f\,\hat g, \quad
    \langle f,g \rangle = \langle \hat{f},\overline{\hat{g}} \rangle, \quad
    \| f\ast g\|_{L^2} = (2\pi)^{\frac{d}{2}} \| \hat f\,\hat g\|_{L^2}.
\end{equation}
    \item For $s\in \R$, the inhomogeneous Sobolev spaces $H^s(\R^d)$, $H^s$ for short, are defined via Fourier transform by
    \begin{align*}
        H^s = \Big\{f\in \mathcal{S}'(\R^d): \| f\|_{H^s}^2 := \int_{\R^d} \langle \xi\rangle^{2s} |\hat f(\xi)|^2 \dd \xi <\infty\Big\}.
    \end{align*}
    Similarly, the homogeneous Sobolev spaces $\dot H^s$ are defined by replacing $\langle \xi \rangle$ by $|\xi|$ in the definition of $\| \cdot\|_{H^s}$ (resp. $\| \cdot\|_{\dot H^s}$). Note that for $s>0$ it holds $H^s\hookrightarrow \dot H^s$, while for $s<0$ $\dot H^s\hookrightarrow H^s$. We adopt the convention that both spaces coincide with $L^2$ for $s=0$.
    \item The symbol $\one_A$ indicates the indicator function of a set $A$.
    \item We always work with filtered probability spaces $(\Omega,\mathcal{F},(\mathcal{F}_t)_t,\PP)$ satisfying the standard assumptions. Elements of $\Omega$ are denoted by $\omega$ and $\EE$ denotes expectation w.r.t. $\PP$.
    \item To perform estimates, we often work with $t\in [0,T]$, for $T$ arbitrary large but finite. We use shortcuts to denote function spaces of mixed regularity involving the variables $\omega$, $t$ and $x$. For example $L^2_t\dot{H}^{-s}_x$ stands for $L^2([0,T];\dot{H}^{-s}(\R^d))$,
    $L^m_\omega L^\infty_t L^p_x$ for $L^m(\Omega; L^\infty([0,T]; L^p(\R^d)))$, and so on. Whenever several exponents coincide, we further abbreviate the notation, so that e.g. $L^2_{\omega,t,x}$ denotes $L^2(\Omega\times[0,T]\times \R^d)$.
    When the subscript is omitted, the space variable is assumed to be $x\in \R^d$.
    \item From now on, when working with stochastic processes (e.g. $\rho$), we will adopt the standard probabilistic convention to indicate the time evaluation at $t\in [0,T]$ with the subscript $\rho_t$.
    \item Concerning asymptotic relations, Landau's $O$ and $o$ have their standard meaning,
and we denote by $\sim$ asymptotic equivalence, \emph{i.e.} $f(x)\sim g(x)$ iff $f(x)=g(x)(1+o(1))$.
We will also write $f(x)\lesssim g(x)$ if and only if $f(x)=O(g(x))$, and $f(x)\simeq g(x)$ if and only if $f(x)\lesssim g(x)$ and $f(x)\gtrsim g(x)$.
Subscripts of Landau's notation or asymptotic relation symbols denote dependence on parameters.
    \item The volume element $\dd\sigma_{d-1}$ on $S^{d-1}\subset \R^d$ in spherical coordinates 
$\theta_1,\theta_2,\dots,\theta_{d-2}\in [0,\pi]$, $\theta_{d-1}\in [-\pi,\pi]$ is
\begin{equation}
    d\sigma_{d-1}= \sin ^{d-2} \theta_1 \sin ^{d-3} \theta_2 \cdots \sin \theta_{d-2} \dd \theta_1 \dd \theta_2 \cdots \dd \theta_{d-1},
\end{equation}
and we write $\omega_{d-1}=\sigma_{d-1}(S^{d-1})=2\pi^{d/2}/\Gamma(d/2)$.
    \item If $f(z)$ is a meromorphic function of $z\in\C$, we denote by $\res_{z=a}\bra{f(z)}$ its residue at $z=a$, in particular if $a$ is a simple pole,
\begin{equation*}
    \res_{z=a}\bra{f(z)}=\lim_{z\to a} (z-a) f(z).
\end{equation*}
\end{itemize}

%\blue{Notation: $A:B={\rm Tr} (A^T B)$ for Frobenius product.}

%------------------------------

\section{Preliminaries}\label{sec:preliminaries}

\subsection{Structure of the noise and stochastic integrals}

In the following, $(\Omega,\mathcal{F},(\mathcal{F}_t)_t,\PP)$ is a filtered probability space satisfying the standard assumptions and $W$ is the centered $\R^d$-valued $(\mathcal{F}_t)_t$-Wiener process with covariance \eqref{eq:covariance_W} where $Q$ is given by \eqref{eq:kraichnan_noise}, for some $\alpha\in (0,2)$. In particular, increments $W_t-W_s$ are independent of $\mathcal{F}_s$ whenever $s<t$.

Correspondingly, we define the convolution operator $\cQ f:= Q\ast f$, which is self-adjoint and positive definite on $L^2(\R^d;\R^d)$, and its square root $\cQ^{1/2}$. The It\^o isometry for stochastic integrals w.r.t. $W$ naturally involves the norm
\begin{align*}
    \| \cQ^{1/2} f\|_{L^2}^2
    = \langle Q\ast f, f\rangle
    = (2\pi)^{d/2} \int_{\R^d} \overline{\hat{f}(\xi)}\cdot \hat{Q}(\xi)\hat{f}(\xi) \dd \xi
    %& = (2\pi)^{d/2} \int_{\R^d} \frac{1}{(1+|\xi|^2)^{d/2+\alpha}} |P^\perp_\xi \hat f(\xi)|^2 \dd \xi.
\end{align*}
where in the second identity we employed our convention \eqref{eq:convention_fourier}. Noting that \eqref{eq:kraichnan_noise} amounts to
\begin{equation}\label{eq:fourier_Q}
    \hat Q(\xi)= \frac{1}{(1+|\xi|^2)^{d/2+\alpha}} P^\perp_\xi,
\end{equation}
we arrive at the formula
\begin{equation}\label{eq:CM_formula}
    \| \cQ^{1/2} f\|_{L^2}^2
    = (2\pi)^{d/2} \int_{\R^d} \frac{1}{(1+|\xi|^2)^{d/2+\alpha}} |P^\perp_\xi \hat f(\xi)|^2 \dd \xi.
\end{equation}

The following result follows from \cite[Lemma 2.8]{GalLuo2023}.

\begin{lemma}\label{lem:stochastic_integrals}
	For any $(\cF_t)_t$-progressively measurable process $f:\Omega\times [0,T]\times \R^d\to \R^d$ such that $\PP$-a.s. $\int_0^T \| \cQ^{1/2} f_r\|_{L^2}^2 \dd r<\infty$, the stochastic integral $M_t:= \int_0^t \langle f_r, \dd W_r\rangle$ is a continuous real-valued local martingale and it holds
    \begin{align*}
        [M]_t= \int_0^t \| \cQ^{1/2} f_r\|_{L^2}^2 \dd r =\int_0^t  \langle Q\ast f_r,f_r\rangle \dd r
        %= \int_0^t \int_{\R^d} \overline{\hat{h}_r(\xi)}\cdot \hat{Q}(\xi)\hat{h}_r(\xi) \dd \xi \dd r,
    \end{align*}
    where $[M]$ denotes the quadratic variation process associated to $M$.
    % Similarly, if $\PP$-a.s. $\int_0^T \| h_r\|_{L^2}^2 \dd r <\infty$, then $N_t:= \int_0^t h_r\cdot\dd W_r$ is a is a well-defined continuous $L^2$-valued local martingale. Moreover it holds
    % \begin{align*}
    %     [N]_t= \frac{{\rm Tr}(Q(0))}{2}\int_0^t  \|h_r\|_{L^2}^2 \dd r, \quad 
    %     \EE\Big[ \sup_{t\in [0,T]} \|N_t\|_{L^2}^p\Big]\lesssim_p {\rm Tr}(Q(0))^{\frac{p}{2}}\EE \bigg[\Big(\int_0^t  \|h_r\|_{L^2}^2 \dd r\Big)^{\frac{p}{2}}\bigg] \quad \forall\, p\in [1,\infty).
    % \end{align*}
\end{lemma}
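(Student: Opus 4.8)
The plan is to reduce the statement to the standard construction of the Itô integral against a Gaussian noise that is white in time and whose spatial Cameron--Martin structure is encoded by $\cQ$, which is precisely the content abstracted in \cite[Lemma 2.8]{GalLuo2023}. The starting observation is that for a fixed deterministic test vector field $f$, the real-valued process $t\mapsto \langle f,W_t\rangle$ is centered Gaussian and, reading off its covariance directly from \eqref{eq:covariance_W}, satisfies $\EE[\langle f,W_t\rangle\langle f,W_s\rangle]=(t\wedge s)\langle Q\ast f,f\rangle=(t\wedge s)\,\norm{\cQ^{1/2}f}_{L^2}^2$, using the identity recorded just above \eqref{eq:CM_formula}. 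Hence $\langle f,W_\cdot\rangle$ is a scaled Brownian motion with bracket $[\langle f,W_\cdot\rangle]_t=t\,\norm{\cQ^{1/2}f}_{L^2}^2$; this elementary fact is what will propagate, via independence of increments, to the full statement.

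First I would define $M$ on simple integrands $f_r=\sum_i \one_{(t_i,t_{i+1}]}(r)\,g_i$, with each $g_i$ being $\cF_{t_i}$-measurable and bounded, by setting $M_t=\sum_i \langle g_i,\,W_{t_{i+1}\wedge t}-W_{t_i\wedge t}\rangle$. Since the increment of $W$ over $(t_i,t_{i+1}]$ is independent of $\cF_{t_i}$ and centered, conditioning on $\cF_{t_i}$ (freezing $g_i$) shows that $M$ is a continuous real-valued martingale, continuity being inherited from the paths of $W$. Moreover, the conditional variance of the $i$-th term given $\cF_{t_i}$ equals $(t_{i+1}-t_i)\norm{\cQ^{1/2}g_i}_{L^2}^2=\int_{t_i}^{t_{i+1}}\norm{\cQ^{1/2}f_r}_{L^2}^2\dd r$, and orthogonality of martingale increments over disjoint intervals yields both the Itô isometry $\EE[M_t^2]=\EE\int_0^t \norm{\cQ^{1/2}f_r}_{L^2}^2\dd r$ and the bracket $[M]_t=\int_0^t \norm{\cQ^{1/2}f_r}_{L^2}^2\dd r$; recalling \eqref{eq:CM_formula} the latter is exactly $\int_0^t\langle Q\ast f_r,f_r\rangle\dd r$.

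It then remains to remove the simplifying assumptions by the usual two-step extension. Using the Itô isometry I would extend $f\mapsto M$ by continuity from simple integrands to all progressively measurable $f$ with $\EE\int_0^T\norm{\cQ^{1/2}f_r}_{L^2}^2\dd r<\infty$, obtaining a continuous square-integrable martingale with the stated quadratic variation, the density of simple integrands in this weighted norm being guaranteed by a monotone-class argument that respects progressive measurability. To reach the local-martingale statement under only the $\PP$-a.s. finiteness of $\int_0^T\norm{\cQ^{1/2}f_r}_{L^2}^2\dd r$, I would localize along the stopping times $\tau_n=\inf\{t:\int_0^t\norm{\cQ^{1/2}f_r}_{L^2}^2\dd r\ge n\}$, apply the previous step to $f\,\one_{[0,\tau_n]}$, and patch the pieces on $[0,\tau_n]$, recovering the bracket formula in the limit $n\to\infty$.

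The main obstacle is not any single computation but the functional-analytic setup: because $W$ is spatially homogeneous on all of $\R^d$, it is not an $L^2(\R^d;\R^d)$-valued Wiener process (indeed $\cQ$ is bounded but neither compact nor trace class, being a Fourier multiplier by \eqref{eq:fourier_Q}), so both the pairing $\langle g,W\rangle$ and the integral must be understood in the cylindrical / reproducing-kernel sense attached to the norm $\norm{\cQ^{1/2}\cdot}_{L^2}$. What makes everything well defined is precisely that we only ever test against integrands $g$ lying in the Cameron--Martin space of the noise, i.e. with $\norm{\cQ^{1/2}g}_{L^2}<\infty$; handling this rigorously is exactly where I would lean on \cite[Lemma 2.8]{GalLuo2023} rather than redo the construction from scratch.
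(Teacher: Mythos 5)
Your proposal is correct; the paper itself gives no proof of this lemma but simply defers to \cite[Lemma 2.8]{GalLuo2023}, and your sketch (simple integrands, conditional variance via independence of increments, It\^o isometry, density, localization) is precisely the standard construction that the cited result encapsulates. You also correctly identify the one genuine subtlety --- that $\cQ$, being a translation-invariant operator on $L^2(\R^d)$, is not trace class, so the integral must be understood in the cylindrical sense against integrands in the Cameron--Martin space --- which is exactly the point the paper outsources to the reference.
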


\begin{remark}\label{rem:properties_Q}
    By \cite[Lemma 2.2]{GalLuo2023}, $\cQ^{1/2}$ is a bounded linear operator from $L^2$ to itself and from $BV$ to $L^2$, where $BV$ denotes the space of (vector valued) signed measures of bounded variation; by interpolation, this implies that $\cQ^{1/2}\in \mathcal{L}(L^p;L^2)$ for any $p\in [1,2]$.
    Therefore by \cref{lem:stochastic_integrals}, $\int_0^t \langle f_r, \dd W_r\rangle$ is a well defined local martingale as soon as $\int_0^T \| f_r\|_{L^p}^2 \dd r<\infty$ $\PP$-a.s.
\end{remark}

To give meaning to the SPDE \eqref{eq:intro_kraichnan}, we need to further extend the definition of stochastic integrals, and in particular make sense of It\^o integrals of the form $\int_0^\cdot \nabla f_r\cdot \dd W_r$ in the sense of distributions.

\begin{lemma}\label{lem:stoch_int_distributions}
    For any progressively measurable $f\in L^2_{\omega,t,x}$ and any $s\in [0,d/2+\alpha)$, it holds
    \begin{equation}\label{eq:stoch_int_distributions}
        \EE\bigg[\sup_{t\in [0,T]} \Big\| \int_0^t \nabla f_r\cdot \dd W_r\Big\|_{H^{-s-1}}^2\bigg] \lesssim_{\alpha,s,d} \EE\bigg[ \int_0^T \| f_r\|_{H^{-s}}^2 \dd r\bigg]
    \end{equation}
    As  a consequence, the map $f\mapsto \int_0^\cdot \nabla f_r\cdot \dd W_r$ uniquely extends to a map from $L^2_{\omega,t} H^{-s}$ to $L^2_\omega C_t H^{-s-1}$.
\end{lemma}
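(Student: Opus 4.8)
The plan is to reduce the estimate to a frequency-by-frequency application of Doob's maximal inequality together with the It\^o isometry of \cref{lem:stochastic_integrals}, and then to a single pointwise bound on a convolution kernel in which the divergence-free structure of the noise (encoded by the projector $P^\perp$) plays the decisive role. Throughout I would first argue for $f\in L^2_{\omega,t,x}$, which is dense in $L^2_{\omega,t}H^{-s}$ for $s\geq 0$, and define $N_t := \int_0^t\nabla f_r\cdot\dd W_r$ through its pairing with test functions. Exploiting that $W$ is divergence-free, an integration by parts in $x$ moves the gradient from $f_r$ onto the test function, so that for the Fourier mode $\phi_\xi(x)=(2\pi)^{-d/2}e^{-i\xi\cdot x}$ one obtains $\hat N_t(\xi)=\langle N_t,\phi_\xi\rangle = i\int_0^t\langle \xi\, f_r\phi_\xi,\dd W_r\rangle$. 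By \cref{rem:properties_Q} this is a well-defined complex continuous local martingale for a.e.\ $\xi$, and its quadratic variation is computed from \cref{lem:stochastic_integrals} in its complex bilinear form. Using $\widehat{f_r\phi_\xi\,\xi}(\theta)=(2\pi)^{-d/2}\xi\,\hat f_r(\xi+\theta)$, the formula \eqref{eq:fourier_Q}, the change of variables $\zeta=\xi+\theta$, and the Notation identity $|P^\perp_{\xi-\zeta}\xi|^2=|P^\perp_{\xi-\zeta}\zeta|^2$, this yields
\[
\EE\big[|\hat N_T(\xi)|^2\big] = C\,\EE\int_0^T\!\int_{\R^d}\langle\xi-\zeta\rangle^{-d-2\alpha}\,|P^\perp_{\xi-\zeta}\zeta|^2\,|\hat f_r(\zeta)|^2\dd\zeta\dd r .
\]

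Next I would pass to the supremum in time. Writing $\|N_t\|_{H^{-s-1}}^2=\int_{\R^d}\langle\xi\rangle^{-2s-2}|\hat N_t(\xi)|^2\dd\xi$ and using $\sup_t\int\leq\int\sup_t$ together with Tonelli, it suffices to control $\EE[\sup_t|\hat N_t(\xi)|^2]$ for each $\xi$; Doob's $L^2$-maximal inequality applied to the real and imaginary parts gives $\EE[\sup_t|\hat N_t(\xi)|^2]\leq 4\,\EE[|\hat N_T(\xi)|^2]$. Combining this with the identity above and Tonelli reduces \eqref{eq:stoch_int_distributions} to the pointwise kernel estimate
\[
I(\zeta):=\int_{\R^d}\langle\xi\rangle^{-2s-2}\langle\xi-\zeta\rangle^{-d-2\alpha}\,|P^\perp_{\xi-\zeta}\zeta|^2\dd\xi\ \lesssim_{\alpha,s,d}\ \langle\zeta\rangle^{-2s},\qquad \zeta\in\R^d .
\]

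The heart of the argument, and the step I expect to be the main obstacle, is this kernel bound for the full range $s<d/2+\alpha$. The naive bound $|P^\perp_{\xi-\zeta}\zeta|^2\leq|\zeta|^2$ combined with a standard convolution estimate for Bessel-type weights only yields the restricted range $s<d/2+\alpha-1$; to reach $s<d/2+\alpha$ one must exploit the extra cancellation of the projector near $\xi=0$. I would use the rewriting $|P^\perp_{\xi-\zeta}\zeta|^2=\tfrac{|\zeta|^2}{|\xi-\zeta|^2}\,|P^\perp_\zeta\xi|^2\leq\tfrac{|\zeta|^2}{|\xi-\zeta|^2}\,|\xi|^2$ and split the domain into $\{|\xi|\leq|\zeta|/2\}$, $\{|\xi-\zeta|\leq|\zeta|/2\}$ and the complement. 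On the first region the factor $|P^\perp_{\xi-\zeta}\zeta|^2\lesssim|\xi|^2$ tames the local behaviour, and integrating $\langle\xi\rangle^{-2s-2}|\xi|^2$ up to scale $|\zeta|$ (whose growth is controlled exactly when $s<d/2+\alpha$, with only a harmless logarithm at $s=d/2$) against $\langle\zeta\rangle^{-d-2\alpha}$ produces the required $\langle\zeta\rangle^{-2s}$ decay; on the second region $\langle\xi\rangle\simeq\langle\zeta\rangle$ and the crude projector bound $|P^\perp_{\xi-\zeta}\zeta|^2\le |\zeta|^2$ tested against the integrable weight $\langle\cdot\rangle^{-d-2\alpha}$ suffices; on the complement both weights decay like $|\xi|$ and the tail is summable.

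Finally, the a priori estimate \eqref{eq:stoch_int_distributions} shows that $f\mapsto N$ is a bounded linear map from the dense subspace $L^2_{\omega,t,x}$ of $L^2_{\omega,t}H^{-s}$ into $L^2_\omega C_t H^{-s-1}$, time continuity of the paths following from that of the approximating stochastic integrals together with the estimate; it therefore extends uniquely by density to all of $L^2_{\omega,t}H^{-s}$, with the same bound.
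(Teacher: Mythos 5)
Your argument is correct and follows essentially the same route as the paper: Plancherel plus Doob's maximal inequality frequency by frequency, the It\^o isometry of \cref{lem:stochastic_integrals} applied to $\varphi=(2\pi)^{-d/2}e^{-i\xi\cdot x}$ after integrating by parts with the divergence-free noise, and then a kernel estimate that exploits the identity $|P^\perp_{\xi-\zeta}\zeta|=|P^\perp_{\xi-\zeta}\xi|\le|\xi|$. The only cosmetic difference is that the paper applies the bound $|P^\perp_\eta\xi|^2\le|\xi|^2$ globally at the outset --- which already reaches the full range $s<d/2+\alpha$, since $|\xi|^2$ is absorbed into the weight $\langle\xi\rangle^{-2s-2}$ --- and concludes with a two-region Young's-inequality argument, whereas you retain the projector longer and organize the same cancellation as a pointwise Schur-type bound over three regions.
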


\begin{proof}
    Let us set $N_t = \int_0^t \nabla f_r\cdot \dd W_r$. 
    First notice that, for $f\in L^2_{\omega,t,x}$, since $W$ is divergence-free, for any complex-valued $\varphi\in W^{1,\infty}$ by integration by parts it holds
    \begin{align*}
        \langle N_t, \varphi\rangle = -\int_0^t \langle f_r \overline{\nabla\varphi}, \dd W_r\rangle
    \end{align*}
    where now $f\,\nabla\varphi\in L^2_{\omega,t,x}$ and so the integral is well-defined by \cref{lem:stochastic_integrals}.
    We take $\varphi=(2\pi)^{-d/2}e^{-i\xi\cdot x}$ and apply \cref{lem:stochastic_integrals}: noting that
    \begin{align}\label{eq:Fourier_derivative}
        \widehat{f\xi e^{-i\xi \cdot}}(-\eta) = \hat{f}(\xi-\eta) \xi, \quad \forall \xi,\eta\in \R^d,
    \end{align}
    by Burkholder-Davis-Gundy inequality we obtain
    \begin{align*}
        \EE\Big[\sup_{t\in [0,T]} |\widehat{N_t}(\xi)|^2\Big]
        & \lesssim \int_0^T\EE\bigg[ \int_{\R^d}\overline{\hat{f}_r(\xi-\eta)}\cdot \hat{Q}(-\eta)\hat{f}_r(\xi-\eta) \dd\eta \bigg] \dd r\\
        & = \int_0^T \int_{\R^d} |P^\perp_\eta \xi|^2 (1+|\eta|^2)^{-\frac{d}{2}-\alpha}\, \EE\big[|\hat f_r(\xi-\eta)|^2\big] \dd \eta \dd r\\
        & \leq |\xi|^2 \int_0^T \int_{\R^d} (1+|\eta|^2)^{-\frac{d}{2}-\alpha}\, \EE\big[|\hat f_r(\xi-\eta)|^2\big] \dd \eta \dd r.
    \end{align*}
    As a consequence
    \begin{align*}
        \EE\Big[\sup_{t\in [0,T]} \| N_t\|_{H^{-s-1}}^2\Big]
        & \leq \int_{\R^d} (1+|\xi|^2)^{-s-1} \EE[\sup_{t\in [0,T]} |\widehat{N_t}(\xi)|^2] \dd \xi\\
        & \lesssim \int_0^T \int_{\R^d\times \R^d} (1+|\xi|^2)^{-s} (1+|\eta|^2)^{-\frac{d}{2}-\alpha} \EE[|\hat f_r(\xi-\eta)|^2] \dd \eta \dd \xi \dd r =(\ast).
    \end{align*}
    Let us define the function $g_r(\eta):= (1+|\eta|^2)^{-s} \EE[|\hat f_r(\eta)|^2]$, so that by assumption $g_r\in L^1$ with $\| g_r\|_{L^1}=\EE[\|f_r\|_{H^{-s}}^2]$; then
    \begin{align*}
        (\ast)
        & = \int_0^T \int_{\R^d\times \R^d} \frac{(1+|\xi-\eta|^2)^{s}}{(1+|\xi|^2)^{s} (1+|\eta|^2)^{\frac{d}{2}+\alpha}} g_r(\xi-\eta) \dd \eta \dd \xi \dd r.
    \end{align*}
    Define $D_1:=\{(\xi,\eta): |\xi-\eta|\leq 2 |\xi|\}$, $D_2=D_1^c=\{(\xi,\eta): |\xi-\eta|> 2 |\xi|\}$ and correspondingly set
    \begin{align*}
        I^i_r := \int_{D_i} \frac{(1+|\xi-\eta|^2)^{s}}{(1+|\xi|^2)^{s} (1+|\eta|^2)^{\frac{d}{2}+\alpha}} g_r(\xi-\eta) \dd \eta \dd \xi \dd r.
    \end{align*}
    On $D_1$, $(1+|\xi-\eta|^2)^{s}\lesssim (1+|\xi|^2)^{s}$ and so
    \begin{align*}
        I^1_r \lesssim \int_{\R^{2d}} \langle (1+|\eta|^2)^{-\frac{d}{2}-\alpha} g_r(\xi-\eta) \dd \xi \dd\eta
        = \|(1+|\cdot|^2)^{-\frac{d}{2}-\alpha}\|_{L^1} \| g_r\|_{L^1} \lesssim \EE[\|f_r\|_{H^{-s}}^2];
    \end{align*}
    instead on $D_2$ it holds $|\xi-\eta|\sim |\eta|$ and so
    \begin{align*}
        I^2_r
        & \lesssim \int_{\R^{2d}} (1+|\xi|^2)^{-s} (1+|\eta|^2)^{-\frac{d}{2}-\alpha+s} g_r(\xi-\eta) \dd \xi \dd\eta\\
        &= \langle (1+|\cdot|^2)^{-s}, (1+|\cdot|^2)^{-d/2-\alpha+s}\ast g_r\rangle\\
        & \leq \| g_r\|_{L^1} \| (1+|\cdot|^2)^{-s}\ast (1+|\cdot|^2)^{-d/2-\alpha+s}\|_{L^\infty}
        \lesssim \EE[\|f_r\|_{H^{-s}}^2]
    \end{align*}
    where in the last passage we used the facts that $d/2+\alpha-s>0, (d/2+\alpha-s)+s>d/2$ and Young's convolution inequality.
    Combining the bounds on $I^1$ and $I^2$, we conclude that \eqref{eq:stoch_int_distributions} holds.
\end{proof}

\begin{remark}
    A closer inspection to the proof of \cref{lem:stoch_int_distributions} reveals that we are actually providing a bound on the bracket process $[N]_t$ associated to the $H^{-s-1}$-valued martingale $N_t=\int_0^t \nabla f_r\cdot \dd W_r$.\footnote{For martingales $N$ with values in a Hilbert space $E$, $[N]_t$ is the unique increasing real-valued process such that $\|N_t\|_E^2 - [N]_t$ is still a martingale.} In particular, it holds
    \begin{align*}
        \frac{\dd}{\dd t} [N]_t \lesssim_{\alpha,s,d} \| f_r\|_{H^{-s}}^2.
    \end{align*}
    By standard localisation arguments, the definition of $N_t$ as a local martingale extends to any progressively measurable process $f$ such that $\int_0^T \| f_r\|_{H^{-s}}^2 \dd r<\infty$ $\PP$-a.s.
\end{remark}

%------------------------------
\subsection{Notions of Solutions}

The Kraichnan model \eqref{eq:intro_kraichnan} is typically written in Stratonovich form; this is the physical correct choice in order to interpret it as a passive scalar SPDE and derive representation formulas of the form \eqref{eq:intro_Lagrangian_formula}, at least whenever $W$ and $\rho$ are regular enough.
Mathematically, it is convenient to rewrite the system in It\^o form by computing the It\^o--Stratonovich corrector, yielding
\begin{equation}\label{eq:transportnoise}
    \dd \rho_t(x) + \nabla \rho_t(x)\cdot \dd W_t(x) = \frac{c}{2}\Delta \rho_t(x) \dd t
\end{equation}
where $c$ is the constant such that $Q(0)=cI$, namely (by \eqref{eq:fourier_Q})
\begin{align*}
    c= (2\pi)^{-d/2} \,\frac{d-1}{d} \int_{\R^d} \brak{\xi}^{-d-2\alpha} \dd \xi >0.
\end{align*}
It is well known that the two formulations \eqref{eq:intro_kraichnan} and \eqref{eq:transportnoise} are formally equivalent. 
%, as one can see writing the Stratonovich integral in \eqref{eq:intro_kraichnan} as It\^o integral plus It\^o-Stratonovich correction.
However, when $W$ and $\rho$ are spatially irregular, a rigorous formulation of \eqref{eq:intro_kraichnan} directly in Stratonovich form is missing, see the discussions in \cite[Section 2.2]{GalLuo2023} and \cite[Remark 2.10]{coghi2023existence}; for this reason, from now on we will systematically work directly with the It\^o formulation \eqref{eq:transportnoise}.
% 
% At first glance, the It\^o formulation seems to suggest that the stochastic advection \eqref{eq:intro_kraichnan} behaves like an additional Laplacian under expectation: such a heuristic observation is flawed, because, for spatially smooth noise, the solution follows the characteristics and hence does not gain spatial regularization from the Laplacian.

% \begin{definition}
%     An $L^2$ solution $\rho$ to the stochastic PDE \eqref{eq:transportnoise} is a $L^2$-valued, $(\cF_t)_t$-progressively measurable process, weakly continuous, with $\rho \in L^2([0,T];L^2)$ $\PP$-a.s., and such that
%     \begin{align*}
%         \rho_t = \rho_0 - \int_0^t \nabla \rho_s \dd W_s + \int_0^t \frac{c}{2} \Delta \rho_s \dd s, \quad \forall t\in [0,T].
%     \end{align*}
% \end{definition}

In the following, we will say that a $\mathcal{S}'(\R^d)$-valued process $f$ is weakly continuous if, for $\PP$-a.e. $\omega$, $t\mapsto f_t(\omega)$ is continuous w.r.t. the $\mathcal{S}'(\R^d)$-topology (that is, $t\mapsto \langle \varphi, f_t(\omega) \rangle$ is continuous for every $\varphi\in \mathcal{S}(\R^d)$).

\begin{definition}\label{defn:Kraichnan}
    Let $s\in [0,d/2)$ and $\rho_0\in \dot{H}^{-s}$.
    We say that $\rho$ is a $\dot{H}^{-s}$-solution to the SPDE \eqref{eq:transportnoise} if it is a $(\cF_t)_t$-progressively measurable, $H^{-s-2}$-valued weakly continuous process, satisfying $\rho \in L^2_{\omega,t}\dot{H}^{-s}$ and such that $\PP$-a.s.
    \begin{equation}\label{eq:def_sol}
        \rho_t = \rho_0 - \int_0^t \nabla \rho_s\cdot \dd W_s + \int_0^t \frac{c}{2} \Delta \rho_s \dd s, \quad \forall t\in [0,T].
    \end{equation}
\end{definition}

\begin{remark}
    Using the fact that $\Delta$ is a bounded linear operator from $\dot H^{-s}$ to $H^{-s-2}$, and applying \cref{lem:stoch_int_distributions}, all the integrals appearing in \eqref{eq:def_sol} define continuous processes in $H^{-s-2}$.

    Let us note that \cref{defn:Kraichnan} is not the only possible definition of solutions, as one can alternatively give meaning to \eqref{eq:transportnoise} in a variational way by testing with smooth functions $\varphi\in C^\infty_c$; the latter method allows to consider processes with values in $L^1_{loc}$, cf. \cite{GalLuo2023}. The two definitions are equivalent as soon as $\rho$ takes values in $L^2$.
\end{remark}

The following result is a consequence of \cite[Propositions 3.3 and 3.4]{GalLuo2023}.
%\footnote{There is a typo in \cite{GalLuo2023}, where pathwise uniqueness is stated in the class of solutions such that $\PP$-a.s. $\rho\in L^1_t L^1$. It is clear however from the proof that $\rho\in L^2_t L^1$ is needed, in order to apply It\^o formula to $|\hat\rho_t(\xi)|^2$, cf. \cref{lem:evol_regular_quantities} below.}
It guarantees the well-posedness of the inviscid SPDE \eqref{eq:transportnoise} in the class of $(L^1\cap L^2)$-valued solutions. 

\begin{theorem}\label{thm:wellposedness_kraichnan}
    For any $\rho_0\in L^1\cap L^2$, strong existence and pathwise uniqueness hold for equation \eqref{eq:transportnoise}, in the class of $\dot{H}^{-s}$ solutions $\rho$ such that, $\PP$-a.s., $\rho$ is weakly continuous in $L^2$ and $\rho\in L^2_t  L^1$. Moreover, the unique solution satisfies $\PP$-a.s., for any $p\in [1,2]$, the pathwise bound
    \begin{equation}\label{eq:Lp_bound}
        \|\rho_t\|_{L^p} \le \|\rho_0\|_{L^p}, \quad \forall t\in [0,T].
    \end{equation}
\end{theorem}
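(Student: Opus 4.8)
The plan is to treat existence, pathwise uniqueness, and the bound \eqref{eq:Lp_bound} separately, exploiting throughout that \eqref{eq:transportnoise} is \emph{linear} in $\rho$ with the driving noise $W$ fixed. The cleanest order is: first establish an $L^2$ energy inequality for solutions in the given class, deduce uniqueness from it by linearity, and finally construct a solution by vanishing viscosity, recovering \eqref{eq:Lp_bound} in the limit by lower semicontinuity.

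For the energy estimate I would work on the Fourier side. Testing \eqref{eq:def_sol} against $e^{-i\xi\cdot}$ and using \eqref{eq:Fourier_derivative}, each mode $\hat\rho_t(\xi)$ solves a scalar It\^o SDE; the hypothesis $\rho\in L^2_tL^1$ guarantees that $\hat\rho_t(\xi)$ is a genuine bounded, continuous process, so It\^o's formula applies to $|\hat\rho_t(\xi)|^2$ mode by mode (precisely the point flagged in the footnote). Writing $u_t(\xi):=\EE|\hat\rho_t(\xi)|^2$, the It\^o correction coming from the quadratic variation in \cref{lem:stochastic_integrals} (evaluated through \eqref{eq:fourier_Q}) contributes a gain governed by the kernel $J(\xi,\eta)\propto|P^\perp_{\xi-\eta}\eta|^2(1+|\xi-\eta|^2)^{-d/2-\alpha}$, while the $\tfrac c2\Delta$ drift contributes the loss $c|\xi|^2u_t(\xi)$. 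The normalization is exactly such that $\int_{\R^d}J(\xi,\eta)\,\dd\eta=c|\xi|^2$, the isotropy computation that fixes $c$ via $Q(0)=cI$, so the balance reads
\[
\frac{\dd}{\dd t}u_t(\xi)=\int_{\R^d}J(\xi,\eta)\big(u_t(\eta)-u_t(\xi)\big)\,\dd\eta .
\]
Here the divergence-free structure enters through $|P^\perp_{\xi-\eta}\xi|=|P^\perp_{\xi-\eta}\eta|$, which makes $J$ symmetric in $(\xi,\eta)$; hence $J$ defines a mass-non-increasing symmetric jump generator and, integrating in $\xi$, $\tfrac{\dd}{\dd t}\int u_t\le0$, i.e. $\EE\|\rho_t\|_{L^2}^2\le\|\rho_0\|_{L^2}^2$. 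Uniqueness is then immediate: the difference of two solutions solves \eqref{eq:transportnoise} with zero datum, whence $\EE\|\rho_t\|_{L^2}^2\le0$ and $\rho\equiv0$.

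For existence I would regularize by viscosity, solving $\dd\rho^\nu+\nabla\rho^\nu\cdot\dd W=(\tfrac c2+\nu)\Delta\rho^\nu\,\dd t$ with $\rho^\nu_0=\rho_0$ on the given filtered probability space; for $\nu>0$ this is a strictly parabolic linear SPDE whose strong well-posedness in $L^2$ follows from standard variational/semigroup theory. The key structural point is that the stochastic integral operator $f\mapsto\int_0^\cdot\nabla f\cdot\dd W$ (bounded and linear by \cref{lem:stoch_int_distributions}) and the Laplacian are the \emph{same} for every $\nu$, so the whole family is linear with a single fixed driving structure and no change of probability space is ever required. Uniform-in-$\nu$ bounds $\|\rho^\nu_t\|_{L^p}\le\|\rho_0\|_{L^p}$ — smooth solutions conserve $L^p$ under the divergence-free transport while the extra $\nu\Delta$ only dissipates — yield boundedness in $L^2_\omega L^\infty_t(L^1\cap L^2)$; extracting a weak-$\ast$ limit $\rho$ and passing to the limit in every term through weak-to-weak continuity of the fixed bounded linear operators (with $\nu\Delta\rho^\nu\to0$) identifies $\rho$ as a strong $\dot H^{-s}$-solution on the original space, and \eqref{eq:Lp_bound} survives by weak lower semicontinuity.

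The main obstacle is the rigorous justification of the energy identity at the low regularity permitted by the solution class: taken separately, the gain and loss terms both involve $\int|\eta|^2u_t(\eta)\,\dd\eta=\EE\|\nabla\rho_t\|_{L^2}^2$, which may be infinite precisely because $\rho\notin L^2_tH^1$ (the same defect responsible for anomalous dissipation). One must therefore never split gain from loss, but keep the symmetric combination $u_t(\eta)-u_t(\xi)$ intact, arguing via a truncation $|\xi|,|\eta|\le R$ and a monotone passage $R\to\infty$ that uses only the manifest sign and symmetry of $J$ rather than finiteness of the $\dot H^1$-seminorm. Secondary technical points — progressive measurability and weak $L^2$-continuity of the limit, and upgrading the expectation bound to the pathwise bound \eqref{eq:Lp_bound} — I would handle through the flow/renormalization representation of the smooth approximants together with Fatou's lemma, but the symmetric-kernel balance is where the real work lies and where the hypothesis $\rho\in L^2_tL^1$ is indispensable.
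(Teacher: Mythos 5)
You should first be aware that the paper does not prove this theorem at all: it imports it verbatim from \cite{GalLuo2023} (Propositions 3.3 and 3.4 there). Measured against that reference, your outline follows the same strategy: the mode-by-mode It\^o formula for $|\hat\rho_t(\xi)|^2$ under the hypothesis $\rho\in L^2_tL^1$ (this is exactly \cref{lem:evol_regular_quantities} and the point of the paper's footnote), the resulting symmetric jump kernel $J(\xi,\eta)=(2\pi)^{-d/2}|P^\perp_{\xi-\eta}\xi|^2\brak{\xi-\eta}^{-d-2\alpha}$ with $\int J(\xi,\eta)\dd\eta=c|\xi|^2$, and existence by vanishing viscosity with uniform pathwise $L^p$ bounds passed to the limit by lower semicontinuity (cf.\ \cref{lem:viscous_kraichnan} and \cref{prop:vanishing_viscosity_approximation}). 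Those parts are sound.

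The gap is in the step you yourself flag as ``where the real work lies'': deducing $\tfrac{\dd}{\dd t}\int u_t\leq 0$ for an \emph{arbitrary} solution in the uniqueness class by truncating to $|\xi|,|\eta|\leq R$ and letting $R\to\infty$ ``using only the sign and symmetry of $J$''. This does not close. Testing the balance with a cutoff $\chi_R$, the part of the double integral where both arguments see $\chi_R\equiv 1$ cancels by symmetry, but the flux through the annulus is only bounded by quantities of the form $R^{2-2\alpha}\|u_t\|_{L^1}+\int_{|\xi|\gtrsim R}|\xi|^2u_t(\xi)\dd\xi$ (use $|P^\perp_{\xi-\eta}\xi|^2\leq\min(|\xi|^2,|\eta|^2)$ and $\int_{|\zeta|\geq R/2}\brak{\zeta}^{-d-2\alpha}\dd\zeta\simeq R^{-2\alpha}$); making these vanish requires $\int|\xi|^2u_t(\xi)\dd\xi=\EE\|\rho_t\|_{\dot H^1}^2<\infty$, which is precisely the regularity that fails here, and the manifest sign of $J$ gives no help on the annulus. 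Since your uniqueness proof rests entirely on applying the resulting $L^2$ inequality to the difference of two solutions, this is a genuine hole rather than a technicality. The repair, which is what the reference and the paper's own \cref{prop:identity_neg_sobolev} actually do, is to give up the unweighted $L^2$ monotonicity and instead run Gr\"onwall on a weighted quantity $\int a_t(\xi)\psi(\xi)\dd\xi$ with a smooth, strictly positive, decaying weight (e.g.\ $\psi(\xi)=|\xi|^{-2s}$ with $s\in(1-\alpha,d/2)$): the evenness of $J(\xi,\xi+\zeta)$ in $\zeta$ kills the first-order Taylor term $\int J(\xi,\eta)\nabla\psi(\xi)\cdot(\eta-\xi)\dd\eta$, the second-order remainder is summable thanks to $s>1-\alpha$, and one obtains $\int J(\xi,\eta)(\psi(\eta)-\psi(\xi))\dd\eta\leq C\psi(\xi)$ \emph{with} the sign (the dangerous contribution of order $|\xi|^{2-2\alpha}\psi(\xi)$ is negative). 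Gr\"onwall in $\dot H^{-s}$ then gives $a\equiv 0$ from $a_0=0$, and uniqueness follows; note that this requires a genuinely smooth weight, so a sharp or Lipschitz truncation cannot substitute for it.
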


\subsection{Vanishing viscosity approximations}\label{subsec:vanishing_viscosity}

In order to prove our main results, it will be sometimes useful to derive estimates for the \emph{viscous} Kraichnan model, given in Stratonovich form by
\begin{equation}\label{eq:Kraichnan_viscous}
    \dd \rho^\nu_t(x) + \circ \dd W_t(x)\cdot \nabla \rho^\nu_t(x)  = \nu\Delta \rho^\nu_t(x) \dd t
\end{equation}
and then pass to the limit as $\nu\to 0^+$. As before, it is mathematically convenient to consider the It\^o version of the SPDE \eqref{eq:Kraichnan_viscous}, given by
\begin{equation}\label{eq:transportnoise_viscous}
    \dd \rho^\nu_t(x) + \dd W_t(x)\cdot \nabla \rho^\nu_t(x)  = \left(\frac{c}{2}+\nu\right)\Delta \rho^\nu_t(x) \dd t.
\end{equation}
One can define solutions to \eqref{eq:transportnoise_viscous} as in \cref{defn:Kraichnan}; the goal of this section is to verify that the SPDE \eqref{eq:transportnoise_viscous} is well defined and, most importantly, it recovers solutions to the inviscid SPDE \eqref{eq:transportnoise} as $\nu\to 0^+$.

\begin{lemma}\label{lem:viscous_kraichnan}
    For any $\rho_0\in L^1\cap L^2$, there exists a strong solution to \eqref{eq:transportnoise_viscous}, which satisfies the $\PP$-a.s. bounds
    \begin{equation}\label{eq:bounds_viscous_kraichnan}
        \| \rho^\nu_t\|_{L^2}^2+2\nu\int_0^t \| \nabla\rho^\nu_s\|_{L^2}^2 \dd s \leq \|\rho_0\|_{L^2}^2, \quad \|\rho^\nu_t\|_{L^1}\leq \| \rho_0\|_{L^1} \quad \forall\, t\geq 0.
    \end{equation}
    Moreover, pathwise uniqueness holds for \eqref{eq:transportnoise_viscous} in the class of weakly continuous solutions belonging $\PP$-a.s. to $L^2_t L^1_x$.
\end{lemma}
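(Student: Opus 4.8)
The key observation is that the viscous equation \eqref{eq:transportnoise_viscous} differs from the inviscid one \eqref{eq:transportnoise} only through the coefficient in front of the Laplacian, which is raised from $c/2$ to $c/2+\nu$. Since $\nu>0$, this strictly enlarges the diffusivity and the extra term $\nu\Delta$ only contributes additional dissipation; consequently the well-posedness theory behind \cref{thm:wellposedness_kraichnan} (and its source \cite{GalLuo2023}) applies up to straightforward modifications. The plan is therefore to (i) produce a solution by the same approximation-and-compactness scheme used in the inviscid case, retaining the dissipative term in the energy estimate, (ii) read off the pathwise bounds \eqref{eq:bounds_viscous_kraichnan} from the energy balance together with an $L^1$-contraction argument, and (iii) establish uniqueness via an energy estimate on the difference of two solutions. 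Throughout, the stochastic integrals appearing are well defined by \cref{lem:stochastic_integrals} and \cref{lem:stoch_int_distributions}.

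For existence, I would regularise the initial datum (mollifying $\rho_0\in L^1\cap L^2$ to smooth compactly supported data) and, if necessary, the noise, solving the resulting nondegenerate parabolic SPDE by classical means; here the genuine viscosity $\nu\Delta$ is an advantage, since stochastic parabolicity holds with a strict margin. Indeed, applying It\^o's formula to $\|\rho^\nu_t\|_{L^2}^2$, the martingale contribution $-2\langle\rho^\nu,\nabla\rho^\nu\cdot\dd W\rangle=-\langle\nabla((\rho^\nu)^2),\dd W\rangle=\langle(\rho^\nu)^2,\nabla\cdot\dd W\rangle$ vanishes pathwise because $W$ is divergence free, while the It\^o correction coming from the quadratic variation equals $+c\,\|\nabla\rho^\nu\|_{L^2}^2\dd t$ (with $c$ as in $Q(0)=cI$) and exactly cancels the $-c\,\|\nabla\rho^\nu\|_{L^2}^2\dd t$ produced by the $\tfrac{c}{2}\Delta$ part of the drift. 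What survives is precisely the dissipation $-2\nu\|\nabla\rho^\nu\|_{L^2}^2\dd t$, yielding the energy balance and hence the first bound in \eqref{eq:bounds_viscous_kraichnan}. This a priori estimate furnishes uniform $L^2_\omega C_t L^2\cap L^2_{\omega,t}H^1$ bounds, from which compactness delivers a solution in the limit; the $L^1$ bound $\|\rho^\nu_t\|_{L^1}\leq\|\rho_0\|_{L^1}$ follows as in \cref{thm:wellposedness_kraichnan}, since both the transport dynamics and the heat semigroup are $L^1$-contractions, so the added viscosity can only reinforce it.

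An equivalent and perhaps cleaner route for existence is to realise the extra Laplacian as a Stratonovich corrector: adding $d$ independent scalar Brownian motions $B^1,\dots,B^d$ (independent of $W$) and transporting $\rho^\nu$ also by the spatially constant fields $\sqrt{2\nu}\,e_k\,\dd B^k$ produces, in It\^o form, exactly the term $\nu\Delta\rho^\nu\dd t$. The augmented noise $\tilde W=W+\sqrt{2\nu}\sum_k e_k B^k$ remains divergence free, Gaussian and white in time, so that \eqref{eq:transportnoise_viscous} becomes again a transport-noise SPDE of the type covered by \cite{GalLuo2023}; one then only has to check that the arguments there tolerate the additional constant (hence spatially smooth) part $2\nu I$ of the covariance, which is harmless.

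For uniqueness, let $\rho$ be the difference of two weakly continuous solutions lying in $L^2_tL^1_x$ with the same initial datum; it solves the linear equation \eqref{eq:transportnoise_viscous} with zero datum. Applying It\^o's formula frequency-by-frequency to $|\hat\rho_t(\xi)|^2$ (cf. \cref{lem:evol_regular_quantities}) and integrating in $\xi$ reproduces the energy balance above, now with vanishing initial energy and the nonnegative dissipation $2\nu\int_0^t\|\nabla\rho_s\|_{L^2}^2\dd s\geq0$, forcing $\rho\equiv0$. The main obstacle throughout is the rigorous justification of this energy identity at the low regularity of the admissible class: the transport term $\nabla\rho\cdot\dd W$ must be controlled and its martingale part shown to vanish, which is exactly why membership in $L^2_tL^1$ rather than merely $L^1_tL^1$ is needed, as flagged in the footnote to \cref{thm:wellposedness_kraichnan}. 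The parabolic smoothing from $\nu\Delta$ mitigates this difficulty at the level of existence, but uniqueness must still be argued in the weaker class, where the Fourier-mode-wise It\^o computation is the key device.
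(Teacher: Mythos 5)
Your proposal is mathematically sound, but it is worth noting that the paper does not actually prove this lemma: its ``proof'' consists entirely of citing \cite{GalLuo2023} (strong existence and the $L^1$-bound from Proposition 3.1 there, the $L^2$-bound from Remark 3.2, pathwise uniqueness from Proposition 3.4), observing only that the arguments for the fractional Laplacian $-(-\Delta)^{\beta/2}$, $\beta\in(0,2)$, extend to $\beta=2$. What you have written is essentially a reconstruction of those cited arguments, and the key computations are correct: the martingale term in the energy balance vanishes pathwise because $W$ is divergence free, the It\^o correction $+c\|\nabla\rho^\nu\|_{L^2}^2$ cancels the contribution of the $\tfrac{c}{2}\Delta$ drift, leaving exactly $-2\nu\|\nabla\rho^\nu\|_{L^2}^2$, and uniqueness in the class $L^2_tL^1_x$ is obtained by the frequency-by-frequency It\^o computation of \cref{lem:evol_regular_quantities} (the symmetry $|P^\perp_{\xi-\eta}\xi|^2=|P^\perp_{\xi-\eta}\eta|^2$ making the transport kernel contribution vanish upon testing). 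One caveat on your ``cleaner route'': absorbing $\nu\Delta$ into the noise by adding the constant fields $\sqrt{2\nu}\,e_k\,\dd B^k$ changes the covariance to $Q+2\nu I$, and the constant part destroys the mapping property $\cQ^{1/2}\in\mathcal{L}(L^1;L^2)$ recorded in \cref{rem:properties_Q}, which is used elsewhere (e.g.\ in the tail estimates of \cref{lem:tail_estimates}, where stochastic integrals are controlled by $\||\rho^\nu|^2\|_{L^1}$). So that alternative is not as ``harmless'' as you claim and would require reworking the $L^1$-based estimates; since you present it only as an optional variant, this does not affect the validity of your main argument.
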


\begin{proof}
    All the statements are taken from \cite{GalLuo2023}; therein they are presented in the (more challenging) case where $\Delta$ is replaced by a fractional Laplacian $-(-\Delta)^{\beta/2}$ for $\beta\in (0,2)$, but the proofs naturally extend to $\beta=2$ as well. Specifically, strong existence and the $L^1$-bound come from \cite[Proposition 3.1]{GalLuo2023}, the $L^2$-bound from Remark 3.2 and pathwise uniqueness from Proposition 3.4 therein.
\end{proof}

The next result is the main goal of this section.

\begin{proposition}\label{prop:vanishing_viscosity_approximation}
    Let $\rho_0\in L^1\cap L^2$ and let $\rho^\nu$, $\rho$ denote the unique $(L^1\cap L^2)$-valued strong solutions respectively to \eqref{eq:transportnoise} and \eqref{eq:transportnoise_viscous}.
    Then for any $T>0$, $s\in (0,d/2)$ and any $m\in [1,\infty)$ it holds
    \begin{equation}\label{eq:vanishing_viscosity_approximation}
        \lim_{\nu\to 0} \EE\Big[ \sup_{t\in [0,T]} \| \rho^\nu_t - \rho_t\|_{\dot H^{-s}}^m\Big] = 0.
    \end{equation}
\end{proposition}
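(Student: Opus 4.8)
The plan is to estimate directly the difference $w^\nu:=\rho^\nu-\rho$, which by subtracting \eqref{eq:transportnoise} from \eqref{eq:transportnoise_viscous} solves the inviscid equation driven by the same noise, with vanishing initial datum and an extra deterministic forcing, $\dd w^\nu+\nabla w^\nu\cdot\dd W=\tfrac{c}{2}\Delta w^\nu\,\dd t+\nu\,\Delta\rho^\nu\,\dd t$, $w^\nu_0=0$. First I would reduce the statement to the case $m=2$. Both solutions obey the uniform bounds $\norm{\rho^\nu_t}_{L^1}+\norm{\rho^\nu_t}_{L^2}\le\norm{\rho_0}_{L^1}+\norm{\rho_0}_{L^2}$ (and likewise for $\rho$) coming from \cref{lem:viscous_kraichnan} and \eqref{eq:Lp_bound}; splitting the Fourier integral at $|\xi|=1$ and bounding $|\widehat{w^\nu}(\xi)|\lesssim\norm{w^\nu}_{L^1}$ on low frequencies, where $\int_{|\xi|\le1}|\xi|^{-2s}\dd\xi<\infty$ precisely because $s<d/2$, gives the deterministic bound $\sup_{\nu,\omega,t}\norm{w^\nu_t}_{\dot{H}^{-s}}\lesssim_{s,d}\norm{\rho_0}_{L^1}+\norm{\rho_0}_{L^2}$. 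Once the case $m=2$ of \eqref{eq:vanishing_viscosity_approximation} is known, the bound for $m\ge2$ follows from $\norm{w^\nu_t}_{\dot{H}^{-s}}^m\le(\sup_t\norm{w^\nu_t}_{\dot{H}^{-s}})^{m-2}\norm{w^\nu_t}_{\dot{H}^{-s}}^2$, and for $m<2$ from Jensen's inequality.

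For $m=2$ I would apply It\^o's formula mode by mode to $|\widehat{w^\nu_t}(\xi)|^2$ (exactly as in the uniqueness argument behind \cref{thm:wellposedness_kraichnan}) and integrate against $|\xi|^{-2s}\dd\xi$. By the It\^o isometry of \cref{lem:stochastic_integrals}, the quadratic variation of the transport martingale $-\nabla w^\nu\cdot\dd W$ produces, at frequency $\xi$, the gain density $\int(1+|\eta|^2)^{-d/2-\alpha}|P^\perp_\eta\xi|^2|\widehat{w^\nu}(\xi-\eta)|^2\dd\eta$, which combines with the It\^o corrector $\tfrac{c}{2}\Delta w^\nu$; the essential point is that the leading $|\xi|^{2-2s}$ contributions of the gain and of the corrector cancel (both carry the coefficient $c$ from $Q(0)=cI$), so that the net flux is controlled, \emph{without loss of derivatives}, by $C\norm{w^\nu_t}^2_{\dot{H}^{-s}}$. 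This yields an inequality of the form $\dd\norm{w^\nu_t}^2_{\dot{H}^{-s}}\le C\norm{w^\nu_t}^2_{\dot{H}^{-s}}\,\dd t+2\nu\langle w^\nu_t,\Delta\rho^\nu_t\rangle_{\dot{H}^{-s}}\,\dd t+\dd M_t$, with $M$ a local martingale.

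The viscous forcing is where the smallness must be spent. Expanding $w^\nu=\rho^\nu-\rho$ I would write $2\nu\langle w^\nu,\Delta\rho^\nu\rangle_{\dot{H}^{-s}}=-2\nu\norm{\rho^\nu}^2_{\dot{H}^{1-s}}+2\nu\,\mathrm{Re}\int|\xi|^{2-2s}\widehat{\rho^\nu}\,\overline{\widehat{\rho}}\,\dd\xi$, the first summand being nonpositive. In the cross term I distribute the two derivatives of $\Delta$ asymmetrically, $|\xi|^{2-2s}=|\xi|^{p}|\xi|^{2-2s-p}$, choosing $p\le1$ so that $\rho$ occupies a space of order $2-2s-p\in(-d/2,0]$ (hence bounded by $\norm{\rho_0}_{L^1}+\norm{\rho_0}_{L^2}$, using $s<d/2$) while $\rho^\nu$ occupies $\dot{H}^p$ with $p\le1$; for instance, for $s\in[\tfrac12,1)$ one may take $p=2-2s$ and pair $\norm{\rho^\nu}_{\dot{H}^{2-2s}}$ with $\norm{\rho}_{L^2}$. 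The viscous dissipation bound $2\nu\int_0^T\norm{\rho^\nu_t}^2_{\dot{H}^1}\,\dd t\le\norm{\rho_0}^2_{L^2}$ from \eqref{eq:bounds_viscous_kraichnan}, together with the interpolation $\norm{\rho^\nu}_{\dot{H}^p}\le\norm{\rho^\nu}^{1-p}_{L^2}\norm{\rho^\nu}^{p}_{\dot{H}^1}$ and H\"older in time, then bounds $\int_0^T 2\nu\,|\langle w^\nu_t,\Delta\rho^\nu_t\rangle_{\dot{H}^{-s}}|\,\dd t\lesssim\nu^{1-p/2}\to0$. Taking expectations, using $\EE[M_t]=0$ after localization together with Gr\"onwall, and handling the supremum in time by Burkholder--Davis--Gundy, I obtain the $m=2$ convergence for all $s\in[\tfrac12,d/2)$.

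The remaining small exponents $s\in(0,\tfrac12)$ are then reached by interpolation: picking $s_1\in(0,s)$ and $s_0\in[\tfrac12,d/2)$ with $s_0>s$, the inequality $\norm{w^\nu}_{\dot{H}^{-s}}\le\norm{w^\nu}^{\theta}_{\dot{H}^{-s_0}}\norm{w^\nu}^{1-\theta}_{\dot{H}^{-s_1}}$ combined with H\"older in $\omega$ converts the convergence already proven at $s_0$ and the uniform bound at $s_1$ into convergence at $s$. The step I expect to be the main obstacle is the flux estimate of the second paragraph: the cancellation between the transport noise's quadratic variation and the It\^o corrector must be exploited sharply—a crude use of $|P^\perp_\eta\xi|^2\le|\xi|^2$ overshoots the corrector and leaves a \emph{positive} $\dot{H}^{1-s}$ term—so that establishing the no-loss bound $C\norm{w^\nu}^2_{\dot{H}^{-s}}$ is really the upper-bound content of the exact flux identities developed in \cref{sec:negsobol}. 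By contrast the forcing analysis is soft, the only structural requirement being $s>0$, which is exactly what produces a positive power of $\nu$ and thereby distinguishes the weak norms $\dot{H}^{-s}$ from $L^2$, where anomalous dissipation precludes convergence.
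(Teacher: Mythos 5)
Your route is genuinely different from the paper's. The paper never runs an energy/Gr\"onwall estimate on $w^\nu=\rho^\nu-\rho$: it proves tightness of $\{\rho^\nu-\rho\}_\nu$ in weighted spaces $C^0_tH^{-s}_w$, identifies every limit point as the unique $(L^1\cap L^2)$-valued solution of \eqref{eq:transportnoise} with zero initial datum (hence zero) via the Gy\"ongy--Krylov lemma, then removes the weight using the tail estimate of \cref{lem:tail_estimates} and passes from $H^{-s}$ to $\dot H^{-s}$ with \cref{lem:negative_sobolev}. That argument is deliberately soft: it uses only the $L^1\cap L^2$ bounds and pathwise uniqueness, and no information whatsoever about the flux functions.

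The genuine gap in your proposal is the second paragraph, i.e.\ the rigorous derivation of $\dd\norm{w^\nu_t}^2_{\dot H^{-s}}\le C\norm{w^\nu_t}^2_{\dot H^{-s}}\dd t+\cdots$. You correctly defer the no-loss flux bound to \cref{sec:negsobol} and \cref{prop:highfreqasymp}, but the paper can justify the flux identity for an equation \emph{without} an extra $\nu\Delta$ only when $s>1-\alpha$ (\cref{prop:identity_neg_sobolev}: the cutoff fluxes $F^n$ are uniformly bounded precisely because $s+\alpha>1$; for $s\le 1-\alpha$ the true flux grows like $|\xi|^{2-2\alpha-2s}$ at infinity and the dominated-convergence argument breaks down). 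For $s\le 1-\alpha$ the paper instead exploits $(1+|\xi|^2)\,\EE[|\hat\rho^\nu_t(\xi)|^2]\in L^1_{t,\xi}$ from \eqref{eq:bounds_viscous_kraichnan} (\cref{prop:identity_neg_sobolev_general}); but $w^\nu$ contains the inviscid $\rho$ and has no such $H^1$-type bound, so neither mechanism applies to $w^\nu$ in that range. Your closing interpolation could repair this, but only if the base convergence is first established at some $s_0\in(\max(1-\alpha,1/2),\,d/2)$; your constraint $s_0\ge 1/2$ does not guarantee $s_0>1-\alpha$ when $\alpha<1/2$. Two further points would need work: (i) getting $\EE[\sup_{t\le T}(\cdots)]$ via Burkholder--Davis--Gundy requires controlling the quadratic variation of the energy martingale $2\int_0^\cdot\langle \nabla[(-\Delta)^{-s}w^\nu_r]\,w^\nu_r,\dd W_r\rangle$, i.e.\ a product estimate at a regularity ($w^\nu\in L^2$ only) where it is not obviously available --- the paper sidesteps this entirely by taking expectations mode by mode before summing; (ii) architecturally, your proof makes this preliminary proposition depend on the Mellin-transform asymptotics of \cref{sec:asymp} (not circular, since \cref{prop:highfreqasymp} is a purely analytic statement about $F$, but it inverts the paper's order and concentrates the hard analysis in a statement the paper keeps soft precisely so it can feed into the proof of \cref{thm:main1} for $s\le 1-\alpha$).
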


In order to prove it, we need some preliminary lemmas.

\begin{lemma}\label{lem:tail_estimates}
    Let $\rho_0\in L^1\cap L^2$, $\rho^\nu$ the unique associated solution to \eqref{eq:transportnoise_viscous}. Then for any $T>0$ it holds
    \begin{equation}\label{eq:tail_estimates}
        \lim_{R\to\infty} \sup_{\nu\in (0,1)}\, \EE\Big[\,\sup_{t\in [0,T]} \| \rho^\nu_t\, \one_{|x|>R}\|_{L^2}^2\Big] = 0.
    \end{equation}
\end{lemma}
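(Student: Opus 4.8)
The plan is to dominate the tail mass by a weighted energy functional and follow its evolution by It\^o's formula, the decisive point being a cancellation produced by the It\^o--Stratonovich corrector. Fix a smooth weight $\phi_R$ with $\one_{\{|x|\ge 2R\}}\le\phi_R\le\one_{\{|x|\ge R\}}$, $0\le\phi_R\le 1$, and $\norm{\nabla\phi_R}_{L^\infty}\lesssim R^{-1}$, $\norm{\Delta\phi_R}_{L^\infty}\lesssim R^{-2}$. Since $\norm{\rho^\nu_t\,\one_{|x|>2R}}_{L^2}^2\le\langle\phi_R,(\rho^\nu_t)^2\rangle$, it suffices to bound $\EE\big[\sup_{t\le T}\langle\phi_R,(\rho^\nu_t)^2\rangle\big]$ uniformly in $\nu\in(0,1)$ and show it vanishes as $R\to\infty$.

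First I would apply It\^o's formula to $t\mapsto\langle\phi_R,(\rho^\nu_t)^2\rangle$, legitimate for fixed $\nu>0$ by the parabolic regularity of the viscous solution (or via mollification). Using $\dd\rho^\nu=-\nabla\rho^\nu\cdot\dd W+(\tfrac c2+\nu)\Delta\rho^\nu\,\dd t$, that the scalar $t\mapsto\rho^\nu_t(x)$ has It\^o correction $c\,|\nabla\rho^\nu(x)|^2\,\dd t$ (since $Q(0)=cI$), and integrating the $\Delta\rho^\nu$ drift by parts against $\phi_R$, the gradient contributions combine into
\[
\Big(c-2\big(\tfrac c2+\nu\big)\Big)\int\phi_R|\nabla\rho^\nu|^2\,\dd x=-2\nu\int\phi_R|\nabla\rho^\nu|^2\,\dd x\le 0,
\]
which is the crucial cancellation: the a priori uncontrolled term $c\int\phi_R|\nabla\rho^\nu|^2$ generated by the noise is exactly absorbed by the corrector, leaving a favourable-sign term that I simply discard. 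What remains is
\[
\langle\phi_R,(\rho^\nu_t)^2\rangle=\langle\phi_R,\rho_0^2\rangle+M_t-2\nu\!\int_0^t\!\langle\phi_R,|\nabla\rho^\nu_s|^2\rangle\,\dd s+\Big(\tfrac c2+\nu\Big)\!\int_0^t\!\langle\Delta\phi_R,(\rho^\nu_s)^2\rangle\,\dd s,
\]
with martingale part $M_t=-\int_0^t\langle\phi_R\nabla\big((\rho^\nu_s)^2\big),\dd W_s\rangle$.

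The remainder drift is harmless: $|\langle\Delta\phi_R,(\rho^\nu_s)^2\rangle|\le\norm{\Delta\phi_R}_{L^\infty}\norm{\rho^\nu_s}_{L^2}^2\lesssim R^{-2}\norm{\rho_0}_{L^2}^2$ by the $L^2$-bound of \cref{lem:viscous_kraichnan}, contributing $O(T R^{-2})$ uniformly in $\nu$. The main obstacle is the martingale term, whose naive quadratic variation involves $\int(\rho^\nu)^2|\nabla\rho^\nu|^2$, which is \emph{not} controllable uniformly as $\nu\to0$. I would resolve this exactly as in \cref{lem:stoch_int_distributions}: since $W$ is divergence-free, integrating by parts inside the stochastic integral shifts the derivative onto the cutoff, giving $M_t=\int_0^t\langle(\rho^\nu_s)^2\,\nabla\phi_R,\dd W_s\rangle$. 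Now the integrand lies in $L^1_x$ with $\norm{(\rho^\nu_s)^2\nabla\phi_R}_{L^1}\le\norm{\nabla\phi_R}_{L^\infty}\norm{\rho^\nu_s}_{L^2}^2\lesssim R^{-1}\norm{\rho_0}_{L^2}^2$, so by the boundedness $\cQ^{1/2}\in\mathcal L(L^1;L^2)$ from \cref{rem:properties_Q} and \cref{lem:stochastic_integrals},
\[
[M]_T=\int_0^T\norm{\cQ^{1/2}\big((\rho^\nu_s)^2\nabla\phi_R\big)}_{L^2}^2\,\dd s\lesssim R^{-2}\,T\,\norm{\rho_0}_{L^2}^4\quad\PP\text{-a.s.},
\]
and Burkholder--Davis--Gundy yields $\EE\big[\sup_{t\le T}|M_t|\big]\lesssim R^{-1}\sqrt T\,\norm{\rho_0}_{L^2}^2$, again uniformly in $\nu$.

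Collecting the three contributions gives $\EE\big[\sup_{t\le T}\langle\phi_R,(\rho^\nu_t)^2\rangle\big]\le\langle\phi_R,\rho_0^2\rangle+C\big(R^{-1}\sqrt T+R^{-2}T\big)\norm{\rho_0}_{L^2}^2$ with $C$ independent of $\nu\in(0,1)$ (here $\tfrac c2+\nu\le\tfrac c2+1$ and the $L^2$-bound are used). Taking $\sup_{\nu}$ and letting $R\to\infty$, the first term vanishes by dominated convergence ($\rho_0\in L^2$) and the others vanish explicitly, proving \eqref{eq:tail_estimates}. I expect the only genuinely delicate point to be the rigorous justification of the It\^o formula and of the integration by parts in the noise for merely $L^2$-valued solutions, which I would dispatch by a mollification/approximation argument, using that for each fixed $\nu>0$ the viscous solution is regular enough to make every manipulation licit.
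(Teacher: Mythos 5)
Your proposal is correct and follows essentially the same route as the paper: test $|\rho^\nu|^2$ against a smooth cutoff at scale $R$, use the It\^o--Stratonovich cancellation to discard the negative term $-2\nu\int \phi_R|\nabla\rho^\nu|^2$, move the derivative onto the cutoff in the stochastic integral via the divergence-free structure of $W$, and control its quadratic variation through $\cQ^{1/2}\in\mathcal L(L^1;L^2)$ together with the uniform $L^2$ bound. The only cosmetic differences are your use of Burkholder--Davis--Gundy where the paper invokes Doob's inequality, and a harmless rescaling of the cutoff radius.
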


\begin{proof}
    Given the solution $\rho^\nu$, one can check that $|\rho^\nu|^2$ solves in the sense of distributions
    \begin{equation}\label{eq:SPDE_square}
        \dd |\rho^\nu|^2 + \nabla |\rho^\nu|^2 \cdot \dd W
        = 2 \nu\, \rho^\nu \Delta \rho^\nu \dd t + \frac{c}{2} \Delta( |\rho^\nu|^2) \dd t
        = \Big(\nu + \frac{c}{2}\Big) \Delta( |\rho^\nu|^2) \dd t -2\nu |\nabla \rho^\nu|^2 \dd t.
    \end{equation}
    Rigorously, to verify the above claim one should first spatially mollify the noise $W$, perform computations at the level of smooth solutions $\rho^{\nu,\eps}$ and then send $\eps\to 0$; since the arguments are classical, and we are only interested in the resulting \textit{inequalities} (which are stable under weak convergence), let us skip this technical step.

    Let $\varphi\in C^\infty_b(\RR^d)$ be a nonnegative function such that $\varphi(x)\equiv 0$ for $|x|\leq 1/2$, $\varphi(x)\equiv 1$ for $|x|\geq 1$; set $\varphi^R(x):=\varphi(x/R)$. Then by \eqref{eq:SPDE_square} and integration by parts it holds
    \begin{align*}
        \dd \langle |\rho^\nu|^2,\varphi^R\rangle
        = \frac{1}{R} \left<  \nabla\varphi\Big(\frac{\cdot}{R}\Big)|\rho^\nu|^2, \dd W\right> +  \frac{1}{R^2} \Big(\nu + \frac{c}{2}\Big) \left< \Delta\varphi\Big(\frac{\cdot}{R}\Big), |\rho^\nu|^2\right> \dd t -2\nu \langle \varphi^R, |\nabla \rho^\nu|^2 \rangle \dd t
    \end{align*}
    Noting that the last term on the r.h.s. is negative, integrating in time we find
    \begin{align*}
        \sup_{t\leq T} \, &\| \rho^\nu_t\, \one_{|x|>R}\|_{L^2}^2
        \leq \sup_{t\leq T}\, \langle |\rho^\nu_t|^2,\varphi^R\rangle\\
        & \leq \langle |\rho_0|^2,\varphi^R\rangle + \frac{1}{R} \sup_{t\in [0,T]} \Big| \int_0^t \left< \nabla\varphi\Big(\frac{\cdot}{R}\Big)|\rho^\nu_r|^2, \dd W_r\right>\Big|  + \frac{1}{R^2} \Big(\nu + \frac{c}{2}\Big) \int_0^T \left< |\Delta\varphi|\Big(\frac{\cdot}{R}\Big), |\rho^\nu_r|^2\right> \dd r.
    \end{align*}
    We can estimate the stochastic integral by Doob's inequality and apply \cref{lem:stochastic_integrals}, \cref{rem:properties_Q} to find
    \begin{align*}
        \EE\Big[ \sup_{t\in [0,T]} \Big| \int_0^t \left\langle \nabla\varphi\Big(\frac{\cdot}{R}\Big)|\rho^\nu_r|^2, \dd W_r\right\rangle\Big|^2 \Big]
        & \lesssim \EE\Big[  \int_0^T \Big\| \nabla\varphi\Big(\frac{\cdot}{R}\Big)\,|\rho^\nu_r|^2\Big\|_{L^1}^2 \dd r \Bigg]\\
        & \lesssim \| \nabla \varphi\|_{L^\infty}^2\, \EE\Big[  \int_0^T \|\rho^\nu_r\|_{L^2}^4 \dd r \Bigg]
        \lesssim T \| \rho_0\|_{L^2}^4,
    \end{align*}
    where we employed the pathwise bound \eqref{eq:bounds_viscous_kraichnan}.
    Similarly, for the deterministic integral we have the pathwise estimate
    \begin{align*}
        \int_0^T \left< |\Delta\varphi|\Big(\frac{\cdot}{R}\Big), |\rho^\nu_r|^2\right> \dd r
        \lesssim \| \Delta\varphi\|_{L^\infty}\, T\, \| \rho_0\|_{L^2}^2.
    \end{align*}
    Combining everything, noting that the above estimates are uniform in $\nu\in (0,1)$, we obtain
    \begin{align*}
        \sup_{\nu\in (0,1)} \EE\Big[\, \sup_{t\leq T} \| \rho^\nu_t\, \one_{|x|>R}\|_{L^2}^2 \Big]
        \lesssim \langle |\rho_0|^2,\varphi^R\rangle + \frac{T^{1/2}}{R} \| \rho_0\|_{L^2}^2 + \frac{T}{R^2} \| \rho_0\|_{L^2}^2.
    \end{align*}
    The conclusion now follows by taking the limit as $R\to\infty$, since by assumption $\rho_0\in L^2$ and so by dominated convergence $\langle |\rho_0|^2,\varphi^R\rangle\to 0$ as $R\to\infty$.
\end{proof}

\begin{lemma}\label{lem:negative_sobolev}
    Let $s>0$, $f\in \dot H^{-s}$. Then for any $\delta,\,\eps>0$ it holds
    \begin{align*}
        \| f\|_{\dot H^{-s}}^2 \leq \eps^{2\delta}  \| f\|_{\dot H^{-s-\delta}}^2 + \Big( 1+\frac{1}{\eps^2}\Big)^s \| f\|_{H^{-s}}^2
    \end{align*}
\end{lemma}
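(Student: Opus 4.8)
The plan is to work entirely on the Fourier side, where all three norms have clean integral expressions. Writing $g(\xi) := |\hat f(\xi)|^2 \geq 0$, we have
\begin{equation*}
    \| f\|_{\dot H^{-s}}^2 = \int_{\R^d} |\xi|^{-2s} g(\xi) \dd \xi, \quad
    \| f\|_{\dot H^{-s-\delta}}^2 = \int_{\R^d} |\xi|^{-2s-2\delta} g(\xi) \dd \xi, \quad
    \| f\|_{H^{-s}}^2 = \int_{\R^d} \langle\xi\rangle^{-2s} g(\xi) \dd \xi,
\end{equation*}
so the claimed inequality reduces to a pointwise estimate on the weights: it suffices to show that
\begin{equation}\label{eq:ptwise_weight}
    |\xi|^{-2s} \leq \eps^{2\delta} |\xi|^{-2s-2\delta} + \Big(1+\tfrac{1}{\eps^2}\Big)^s \langle\xi\rangle^{-2s}
    \qquad \text{for all } \xi\in\R^d\setminus\{0\},
\end{equation}
after which one multiplies by $g(\xi)$ and integrates. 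The strategy is thus to split $\R^d$ into low and high frequencies and handle each regime with a different term on the right.

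First I would dispose of the low frequencies $|\xi|\leq \eps$. There the factor $\eps^{2\delta}|\xi|^{-2\delta}\geq 1$, so $\eps^{2\delta}|\xi|^{-2s-2\delta}\geq |\xi|^{-2s}$ and the first term on the right of \eqref{eq:ptwise_weight} already dominates the left side on its own; the second term is nonnegative and may be discarded. This is the step that explains the sharp exponent $\eps^{2\delta}$: it is exactly the factor needed to make the homogeneous weight of order $-s-\delta$ overtake the weight of order $-s$ precisely at the cutoff $|\xi|=\eps$.

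For the high frequencies $|\xi|>\eps$ I would instead compare $|\xi|^{-2s}$ against $\langle\xi\rangle^{-2s}$. Since $\langle\xi\rangle^2 = 1+|\xi|^2$, one has
\begin{equation*}
    \frac{|\xi|^{-2s}}{\langle\xi\rangle^{-2s}} = \Big(\frac{1+|\xi|^2}{|\xi|^2}\Big)^{s} = \Big(1+\frac{1}{|\xi|^2}\Big)^s \leq \Big(1+\frac{1}{\eps^2}\Big)^s,
\end{equation*}
using $s>0$ and $|\xi|>\eps$ to get monotonicity in $|\xi|$. Hence $|\xi|^{-2s}\leq (1+\eps^{-2})^s\langle\xi\rangle^{-2s}$ on this region, so the second term of \eqref{eq:ptwise_weight} dominates the left side there while the first is discarded. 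Combining the two regimes gives \eqref{eq:ptwise_weight} pointwise everywhere, and integrating against $g$ concludes the proof.

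I do not anticipate a genuine obstacle here: the result is an elementary weighted interpolation and the only real choice is the location of the frequency cutoff, which is dictated by matching $\eps^{2\delta}|\xi|^{-2\delta}=1$ at $|\xi|=\eps$. The one point to keep in mind is simply to verify that the two constants produced by the split are exactly $\eps^{2\delta}$ and $(1+\eps^{-2})^s$ as stated, which the computation above confirms.
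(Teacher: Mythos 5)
Your proof is correct and follows essentially the same route as the paper: a frequency split at $|\xi|=\eps$, using $\eps^{2\delta}|\xi|^{-2\delta}\geq 1$ on the low frequencies and $|\xi|^{-2}\leq (1+\eps^{-2})\langle\xi\rangle^{-2}$ on the high frequencies. Phrasing it as a pointwise weight inequality rather than as a split of the integral is only a cosmetic difference.
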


\begin{proof}
    It holds
    \begin{align*}
        \| f\|_{\dot H^{-s}}^2
        & %= \int_{\R^d} |\xi|^{-2s} |\hat f(\xi)|^2 \dd \xi
        = \int_{|\xi|\leq \eps} |\xi|^{-2s} |\hat f(\xi)|^2 \dd \xi + \int_{|\xi|>\eps} |\xi|^{-2s} |\hat f(\xi)|^2 \dd \xi\\
        & \leq \eps^{2\delta} \int_{|\xi|\leq \eps} |\xi|^{-2s-2\delta} |\hat f(\xi)|^2 \dd \xi + \Big( 1+\frac{1}{\eps^2}\Big)^s \int_{|\xi|>\eps} \langle \xi\rangle^{-2s} |\hat f(\xi)|^2 \dd \xi
    \end{align*}
    where we used the fact that $|y|^{-2} \leq (1+\eps^{-2}) (1+|y|^2)^{-1}$ whenever $|y|\geq \eps$.
    By the definitions of the $\dot H^{-s-\delta}$- and $H^{-s}$-norms, conclusion follows.    
\end{proof}

With this preparations we can complete the

\begin{proof}[Proof of \cref{prop:vanishing_viscosity_approximation}]
    We divide the proof in two main steps.

    \emph{Step 1: convergence in $H^{-s}_w$.} Given the smooth weight $w(x)=(1+|x|^2)^{-d/2-1}$, let us denote by $H^{-s}_w$ the associated weighted negative Sobolev spaces, with norm $\| f\|_{H^{-s}_w} := \| f\, w\|_{H^{-s}}$. Their usefulness comes from the embedding $H^{-s}\hookrightarrow H^{-s}_w$ and compact one $H^{-s}\hookrightarrow H^{-s-\eps}_w$, cf. \cite[Lemma A.4]{bagnara2024regularization}. We first claim that, for any $T>0$, $\delta>0$ and any $m\in [1,\infty)$, it holds
    \begin{equation}\label{eq:vanishing_viscosity_claim}
        \lim_{\nu\to 0} \EE\Big[ \sup_{t\in [0,T]} \| \rho^\nu_t - \rho_t\|_{H^{-s}_w}^m\Big] = 0.
    \end{equation}
    The argument proving \eqref{eq:vanishing_viscosity_claim} is standard, based on tightness estimates and the Gyongy-Krylov lemma \cite[Lemma 1.1]{GyoKry1996}; in a nutshell, we need to verify that $\{\rho^\nu-\rho\}_{\nu>0}$ is tight in $C^0_t H^{-s}_w$ and that any limit point is necessarily $0$.
    As we haven't found a direct reference in the literature, we shortly sketch the proof.

    By \eqref{eq:Lp_bound}-\eqref{eq:bounds_viscous_kraichnan}, we know that $\rho^\nu-\rho$ are uniformly bounded in $L^\infty_{\omega,t} (L^1\cap L^2)$. To verify uniform continuity in time, let us focus on $\rho^\nu$ with \eqref{eq:transportnoise_viscous} written in integral form, the bounds for $\rho$ being similar. Again by \eqref{eq:bounds_viscous_kraichnan}, we have the $\PP$-a.s. bound
    \begin{align*}
        \Big\| \int_s^t \Big( \frac{c}{2} + \nu\Big) \Delta\rho^\nu_r \dd r\Big\|_{H^{-2}}
        \leq \Big( \frac{c}{2} + \nu\Big) \int_s^t \| \rho^\nu_r\|_{L^2} \dd r \lesssim |t-s| \| \rho_0\|_{L^2}\quad \forall\, s<t;
    \end{align*}
    instead for the stochastic integral, applying \cite[Lemma 2.8]{GalLuo2023} and exploiting the fact that $W$ is divergence-free, for any $m\in [1,\infty)$ and $s<t$ it holds
    \begin{align*}
        \EE\bigg[\Big\| \int_s^t \nabla\rho^\nu_r\cdot\dd W_r \Big\|_{H^{-1}}^m\bigg]
        & \leq \EE\bigg[\Big\| \int_s^t \rho^\nu_r\cdot\dd W_r \Big\|_{L^2}^m\bigg]\\
        & \lesssim {\rm Tr}(Q(0))\, \EE\Big[ \Big( \int_s^t \|\rho^\nu_r\|_{L^2}^2 \dd r\Big)^{m/2} \Big]
        \lesssim \| \rho_0\|_{L^2}^m |t-s|^{m/2}.
    \end{align*}
    By an application of Kolmogorov's continuity theorem, we deduce that for any $\gamma<1/2$
    \begin{align*}
        \EE\big[ \| \rho^\nu-\rho\|_{L^\infty_t (L^1\cap L^2)}^m + \| \rho^\nu-\rho\|_{C^\gamma_t H^{-2}}\big] \lesssim \| \rho_0\|_{L^2}^m;
    \end{align*}
    by \cite[Lemma 3.5]{bagnara2024regularization}, $\{\rho^\nu-\rho\}_{\nu>0}$ is tight in $C^0_t H^{-s}_w$, for any $s>0$.

    By Prokhorov's and Skorokhod's theorems, we can then extract a subsequence and pass to a new probability space (both not relabelled for simplicity) such that $\rho^\nu-\rho$ converges in $C^0_t H^{-\delta}_w$ to some limit $\tilde \rho$. Exploiting the linearity of the SPDEs \eqref{eq:transportnoise}-\eqref{eq:transportnoise_viscous}, performing standard arguments one can show that $\tilde\rho$ is a weak solution to the SPDE \eqref{eq:transportnoise} (driven by a new Kraichnan noise $\tilde W$) with initial condition $\tilde \rho_0=0$.
    On the other hand, by properties of weak convergence, for any $t\geq 0$ we have
    \begin{align*}
        \| \tilde\rho_t\|_{L^1\cap L^2} \leq \liminf_{\nu\to 0} \| \rho^\nu_t-\rho_t\|_{L^1\cap L^2} \leq 2\| \rho_0\|_{L^1\cap L^2},
    \end{align*} 
    so that in particular $\tilde\rho \in L^\infty_{\omega,t} L^1$; but then by \cref{thm:wellposedness_kraichnan} it necessarily holds $\tilde\rho\equiv 0$. As the argument holds for any subsequence we can extract, claim \eqref{eq:vanishing_viscosity_claim} follows.

    \emph{Step 2: Conclusion.} We first upgrade \eqref{eq:vanishing_viscosity_claim} by showing that
    \begin{equation}\label{eq:vanishing_viscosity_claim2}
        \lim_{\nu\to 0} \EE\Big[ \sup_{t\in [0,T]} \| \rho^\nu_t - \rho_t\|_{H^{-\delta}}^m\Big] = 0.
    \end{equation}
    Let $\varphi$, $\varphi_R$ be cutoff functions as defined in \cref{lem:tail_estimates}, so that $1-\varphi^R$ is smooth and compactly supported.
    By \eqref{eq:vanishing_viscosity_claim}, $\rho^\nu (1-\varphi^R)\to \rho^\nu (1-\varphi^R)$ in $C_t H^{-\delta}$; on the other hand, by \eqref{eq:tail_estimates}, $\rho^\nu \varphi^R$ can be made arbitrarily small in $H^{-s}$, since
    \begin{align*}
        \sup_{t\in [0,T]} \| \rho^\nu \varphi^R\|_{H^{-s}} \leq \sup_{t\in [0,T]} \| \rho^\nu \varphi^R\|_{L^2}.
    \end{align*}
    Combining these facts together, convergence \eqref{eq:vanishing_viscosity_claim2} follows.
    It remains to further improve this by replacing the $H^{-s}$-topology with the $\dot H^{-s}$-one.
    To this end, fix any $s\in (0,d/2)$ and any $\delta>0$ such that $s+\delta<d/2$. By the embedding $L^1\cap L^2\hookrightarrow \dot H^{-s-\delta}$, the estimates \eqref{eq:Lp_bound}-\eqref{eq:bounds_viscous_kraichnan} and \cref{lem:negative_sobolev}, for any $\eps>0$ it holds
    \begin{align*}
        \sup_{t\in [0,T]} \| \rho^\nu_t-\rho_t\|_{\dot H^{-s}}^m
        \lesssim \eps^{m \delta}  \|\rho_0\|_{L^1\cap L^2}^m + C_\eps \sup_{t\in [0,T]} \| \rho^\nu_t-\rho_t\|_{H^{-s}}^m.
    \end{align*}
    Taking expectation and then in order the limits $\nu\to 0^+$ and $\eps\to 0^+$, \eqref{eq:vanishing_viscosity_approximation} follows from \eqref{eq:vanishing_viscosity_claim2}.
\end{proof}

%------COMMENTO SU DIMENSIONE FRATTALE
% \red{L: After \cref{thm:main2}, I would consider ading a bit of a conceptual discussion about what this means in terms of evolution of fractal sets: if we start with a set $A$ of fractal dimension $H$, then by Frostman's lemma \red{(roughly)} there exists a meausure $\nu$ supported on $A$ such that $\nu\in \dot H^{-\frac{d-H}{2}}$. Formally speaking, $(S \nu)_t$ would be a measure supported on $\Phi_t(A)$, the image of $A$ under the stochastic flow $\Phi_t$ generated by $W$. We see however that, since $(S \nu)_t\in L^2$, $\Phi_t(A)$ immediately becomes a set of positive Lebesgue measure. In other words the ``flow'' $\Phi_t$ (which doesn't exist anymore in a classical sense) increases the Hausdorff dimension of the set $A$.} 

%%%%%%%%%%%%%%%%%%%%%%%%%%%%%%%%%%%%%%%%%%%%%%%%%%%%%%%%%%%%%
\section{Evolution of negative Sobolev norms in Kraichnan's model}\label{sec:negsobol}

%\subsection{Exact identities for the evolution of negative Sobolev norms}\label{subsec:identity_sobolev}

The following lemma, based on \cite{GalLuo2023}, describes the mean evolution of quantities associated to the energy spectrum $|\hat\rho(\xi)|^2$ of a given solution $\rho$.

\begin{lemma}\label{lem:evol_regular_quantities}
    Let $\rho_0\in L^1\cap L^2$ and let $\rho$ be the unique solution to \eqref{eq:transportnoise} given by \cref{thm:wellposedness_kraichnan}. Let $\psi:\R^d\to\R$ be an integrable function with compact support.
    Then the function
    \begin{equation}\label{eq:defn_a}
        a_t(\xi):=\EE[|\hat\rho_t(\xi)|^2]
    \end{equation}
    satisfies
    \begin{equation}\label{eq:evolution_observables}
	\frac{\dd}{\dd t } \int_{\R^d} a_t(\xi) \psi(\xi) \dd \xi
	= (2\pi)^{-d/2} \int_{\R^d\times \R^d} a_t(\xi) |P^\perp_{\xi-\eta}\xi|^2 \frac{1}{\brak{ \xi-\eta}^{d+2\alpha}} (\psi(\eta)-\psi(\xi)) \dd \xi \dd \eta.
\end{equation}
\end{lemma}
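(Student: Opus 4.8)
The plan is to diagonalise the equation in Fourier space, treating each frequency $\xi$ separately: for fixed $\xi$ the process $\hat\rho_t(\xi)$ solves a scalar (complex-valued) It\^o SDE, to which I apply It\^o's formula for $|\hat\rho_t(\xi)|^2$, take expectations to discard the martingale part, and finally integrate against $\psi$. The hypothesis $\rho\in L^2_tL^1$ from \cref{thm:wellposedness_kraichnan} is exactly what makes this legitimate: it guarantees that $\hat\rho_t(\xi)=(2\pi)^{-d/2}\langle\rho_t,e^{-i\xi\cdot}\rangle$ is a well-defined bounded continuous function of $\xi$ with, by \eqref{eq:Lp_bound}, the uniform bound $a_t(\xi)\le(2\pi)^{-d}\|\rho_0\|_{L^1}^2$, which in turn controls all the integrability needed below (this is precisely the point raised in the footnote to \cref{thm:wellposedness_kraichnan}).

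Testing the weak formulation \eqref{eq:def_sol} against $(2\pi)^{-d/2}e^{-i\xi\cdot x}$ and using $\widehat{\Delta\rho_s}(\xi)=-|\xi|^2\hat\rho_s(\xi)$, I first obtain the SDE $\dd\hat\rho_t(\xi)=-\tfrac{c}{2}|\xi|^2\hat\rho_t(\xi)\dd t+\dd M_t(\xi)$, where $M_t(\xi)=\int_0^t\langle\rho_s\nabla\varphi,\dd W_s\rangle$ with $\varphi=(2\pi)^{-d/2}e^{-i\xi\cdot}$ is the (complex) local martingale coming from the transport term. Its quadratic variation is computed exactly as in the proof of \cref{lem:stoch_int_distributions}, via \cref{lem:stochastic_integrals}, \eqref{eq:fourier_Q} and the shift identity \eqref{eq:Fourier_derivative}, except that here I use the It\^o isometry as an exact identity for $[M(\xi),\overline{M(\xi)}]$ rather than an upper bound (no Doob inequality is involved). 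This gives
\[
  \frac{\dd}{\dd t}[M(\xi),\overline{M(\xi)}]_t=(2\pi)^{-d/2}\int_{\R^d}|P^\perp_{\xi-\eta}\xi|^2\,\brak{\xi-\eta}^{-d-2\alpha}\,|\hat\rho_t(\eta)|^2\dd\eta .
\]
Applying It\^o's formula to $|\hat\rho_t(\xi)|^2$, taking expectations (the stochastic terms are genuine, not merely local, martingales thanks to the uniform bound on $a_t$), and using that the drift coefficient $-\tfrac c2|\xi|^2$ is real so that its two occurrences combine to $-c|\xi|^2 a_t(\xi)$, I arrive at the pointwise evolution
\[
  \frac{\dd}{\dd t}a_t(\xi)=-c|\xi|^2 a_t(\xi)+(2\pi)^{-d/2}\int_{\R^d}|P^\perp_{\xi-\eta}\xi|^2\,\brak{\xi-\eta}^{-d-2\alpha}\,a_t(\eta)\dd\eta .
\]

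The final and most delicate step is to integrate this against $\psi$ and recombine the local and nonlocal terms into the symmetric jump form \eqref{eq:evolution_observables}. For the nonlocal term, after pairing with $\psi(\xi)$ I swap the dummy variables $\xi\leftrightarrow\eta$ and use $P^\perp_{\eta-\xi}=P^\perp_{\xi-\eta}$ together with the identity $|P^\perp_{\xi-\eta}\eta|^2=|P^\perp_{\xi-\eta}\xi|^2$ recorded among the notation, turning it into $(2\pi)^{-d/2}\iint a_t(\xi)|P^\perp_{\xi-\eta}\xi|^2\brak{\xi-\eta}^{-d-2\alpha}\psi(\eta)\dd\xi\dd\eta$. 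For the local term I observe that the It\^o--Stratonovich corrector $c$, defined through $Q(0)=cI$, satisfies
\[
  c|\xi|^2=(2\pi)^{-d/2}\int_{\R^d}|P^\perp_{\xi-\eta}\xi|^2\,\brak{\xi-\eta}^{-d-2\alpha}\dd\eta ,
\]
which I verify by the change of variable $\zeta=\xi-\eta$, isotropy of the measure $\brak{\zeta}^{-d-2\alpha}\dd\zeta$ (so that $\int\frac{(\xi\cdot\zeta)^2}{|\zeta|^2}\brak{\zeta}^{-d-2\alpha}\dd\zeta=\frac{|\xi|^2}{d}\int\brak{\zeta}^{-d-2\alpha}\dd\zeta$) and the explicit value of $c$. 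Substituting rewrites $-c|\xi|^2 a_t(\xi)$ as minus the diagonal $\psi(\xi)$ contribution of the same double integral, and the two pieces fuse into the kernel $\psi(\eta)-\psi(\xi)$, which is \eqref{eq:evolution_observables}.

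I expect the genuine obstacle to be conceptual rather than computational and to lie in this last identity: it is the precise cancellation whereby the drift produced by the It\^o--Stratonovich correction exactly removes the diagonal of the collision kernel, leaving the clean symmetric operator of jump type on which all the later $\dot H^{-s}$ estimates depend. The remaining technical points — applying It\^o's formula frequency-by-frequency and interchanging $\tfrac{\dd}{\dd t}$, $\EE$ and $\int\dd\xi$, as well as Fubini in the double integral — are routine given the uniform bound $a_t\le(2\pi)^{-d}\|\rho_0\|_{L^1}^2$, the compact support of $\psi$ and the integrability of $\brak{\cdot}^{-d-2\alpha}$ coming from $d+2\alpha>d$.
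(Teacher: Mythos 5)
Your proposal is correct and follows essentially the same route as the paper: Fourier diagonalization, It\^o's formula for $|\hat\rho_t(\xi)|^2$ with the exact It\^o isometry for the quadratic variation, and the cancellation of the It\^o--Stratonovich drift $-c|\xi|^2 a_t(\xi)$ against the diagonal of the collision kernel via $Q(0)=cI$ (which the paper performs pointwise in $\xi$ before testing with $\psi$, whereas you perform it after integration --- an immaterial reordering). Your isotropy computation of $c|\xi|^2=(2\pi)^{-d/2}\int|P^\perp_{\zeta}\xi|^2\brak{\zeta}^{-d-2\alpha}\dd\zeta$ is just the explicit form of the Fourier-inversion identity $\xi\cdot Q(0)\xi=(2\pi)^{-d/2}\int\xi\cdot\hat Q(\eta)\xi\,\dd\eta$ used in the paper.
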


\begin{proof}
    The argument is taken from \cite[Proposition 3.4 and Remark 3.5]{GalLuo2023}, we give only a sketch.
    Notice that, since $\rho_t\in L^1$, $\hat\rho_t(\xi)$ is a pointwise defined, continuous function.
    Passing to Fourier transform (that is, testing with $(2\pi)^{-d/2}e^{-i\xi\cdot x}$), $\hat{\rho}(\xi)$ satisfies, for every $\xi$,
    \begin{align*}
        \dd \hat\rho(\xi) = -(2\pi)^{-d/2}i\langle \xi e^{-i\xi \cdot}\rho, \dd W(\xi) \rangle -\frac{c}{2}|\xi|^2 \hat\rho(\xi) \dd t.
    \end{align*}
    We apply It\^o formula and recall \cref{lem:stochastic_integrals}, formulas \eqref{eq:CM_formula}-\eqref{eq:Fourier_derivative} and the fact that $Q(0)=cI$, and get the equation for $|\rho(\xi)|^2$ for every $\xi$:
    \begin{align*}
        \dd |\hat\rho(\xi)|^2 &= \dd M^\xi -c|\xi|^2 |\hat\rho(\xi)|^2 \dd t +(2\pi)^{-d/2}\int_{\R^d} \overline{\widehat{\xi e^{-i\xi \cdot}\rho\ }(\eta)} \cdot \hat{Q}(\eta) \widehat{\xi e^{-i\xi \cdot}\rho\ }(\eta) \dd \eta \dd t\\
        &=\dd M^\xi -\xi \cdot Q(0)\xi |\hat\rho(\xi)|^2 \dd t +(2\pi)^{-d/2}\int_{\R^d}\xi \cdot \hat{Q}(\eta)\xi\, |\hat{\rho}(\xi+\eta)|^2 \dd \eta \dd t\\
        &=\dd M^\xi +(2\pi)^{-d/2}\int_{\R^d}\xi \cdot \hat{Q}(\eta)\xi\, \big(|\hat{\rho}(\xi+\eta)|^2-|\hat{\rho}(\xi)|^2\big) \dd \eta \dd t,
    \end{align*}
    where $M^\xi$ is a martingale. Taking expectation and using the expression \eqref{eq:fourier_Q} for $\hat{Q}$, after the change of variables $\tilde\eta =\xi+\eta$ we obtain
    \begin{align}\label{eq:spectrum}
        \frac{\dd}{\dd t}a(\xi) = (2\pi)^{-d/2}\int_{\R^d} \frac{1}{\brak{\xi-\eta}^{d+2\alpha}} |P^\perp_{\xi-\eta}\xi|^2 (a(\eta)-a(\xi)) \dd \eta.
    \end{align}
    Since $\rho\in L^\infty_{t,\omega} L^1$, $a$ is bounded, so we can test \eqref{eq:spectrum} with any integrable function $\psi$ with compact support, getting \eqref{eq:evolution_observables}.
\end{proof}

Formally taking $\psi(\xi) = |\xi|^{-2s}$ in \cref{lem:evol_regular_quantities}, we obtain an equation for the evolution of the $\dot{H}^{-s}$ norm of $\rho$. The next result rigorously justifies this procedure for $s>1-\alpha$. 

\begin{proposition}\label{prop:identity_neg_sobolev}
	Let $\alpha\in (0,1)$ and $W$ given by \eqref{eq:kraichnan_noise}. Let $s\in (1-\alpha,d/2)$, $\rho_0\in L^1\cap L^2$ and $\rho$ be the associated unique solution to \eqref{eq:transportnoise}. Then we have
	\begin{equation}\label{eq:fourier-balance}
    \frac{\dd}{\dd t} \EE\big[\| \rho_t\|_{\dot H^{-s}}^2\big]
    = \int_{\R^d} F(\xi)\, \EE\big[|\hat{\rho}_t(\xi)|^2\big] \dd \xi
\end{equation}
	for the \emph{flux function} $F(\xi)=F(\xi,s,\alpha)$ defined by
	\begin{equation}\label{eq:functions_Fn}
        F(\xi):= (2\pi)^{-d/2} \int_{\R^d} \frac{1}{\brak{ \xi-\eta}^{d+2\alpha}} |P_{\xi-\eta}^\perp \xi|^2
    \Big(\frac{1}{|\eta|^{2s}}-\frac{1}{|\xi|^{2s}}\Big) \dd \eta.
\end{equation}
\end{proposition}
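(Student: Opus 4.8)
The plan is to obtain \eqref{eq:fourier-balance} from \cref{lem:evol_regular_quantities} by testing against a sequence of \emph{integrable, compactly supported} approximations $\psi_n$ of the singular weight $|\xi|^{-2s}$ and passing to the limit. Concretely, fix smooth radial cutoffs with $0\le\chi_n\uparrow 1$ and $\chi_n$ supported in the annulus $\{1/n\le|\xi|\le 2n\}$, and set $\psi_n(\xi):=|\xi|^{-2s}\chi_n(\xi)$; each $\psi_n$ is admissible in \cref{lem:evol_regular_quantities}. Writing $\mathcal K(\xi,\eta):=\brak{\xi-\eta}^{-d-2\alpha}|P^\perp_{\xi-\eta}\xi|^2$ and $F_n(\xi):=(2\pi)^{-d/2}\int_{\R^d}\mathcal K(\xi,\eta)\pa{\psi_n(\eta)-\psi_n(\xi)}\dd\eta$, integrating \eqref{eq:evolution_observables} in time gives
\[
\int_{\R^d} a_t\psi_n\,\dd\xi-\int_{\R^d} a_0\psi_n\,\dd\xi=\int_0^t\int_{\R^d} a_r(\xi)\,F_n(\xi)\,\dd\xi\,\dd r .
\]
Since $\rho_0\in L^1\cap L^2\hookrightarrow\dot H^{-s}$ for $s<d/2$, monotone convergence identifies the left side, as $n\to\infty$, with $\EE\|\rho_t\|_{\dot H^{-s}}^2-\EE\|\rho_0\|_{\dot H^{-s}}^2$. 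The whole content is therefore to justify the passage to the limit on the right and then differentiate in $t$.

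The key analytic step is a uniform-in-$n$ pointwise bound on $F_n$ that exploits a cancellation. After the change of variables $h=\xi-\eta$, the kernel $h\mapsto\mathcal K(\xi,\xi-h)=\brak{h}^{-d-2\alpha}|P^\perp_h\xi|^2$ is \emph{even}, so the first–order Taylor term of $\eta\mapsto|\eta|^{-2s}$ at $\xi$ integrates to zero over any ball centred on the diagonal. I would split the $\eta$–integral into the near region $\{|\xi-\eta|\le|\xi|/2\}$ and its complement. On the near region, symmetrizing in $h\mapsto-h$ and bounding the second–order remainder by $\sup_{|\zeta-\xi|\le|\xi|/2}|\nabla^2|\zeta|^{-2s}|\lesssim|\xi|^{-2s-2}$ yields a contribution
\[
\lesssim |\xi|^{-2s-2}\,|\xi|^2\int_{|h|\le|\xi|/2}\brak{h}^{-d-2\alpha}|h|^2\,\dd h\ \lesssim\ |\xi|^{2-2s-2\alpha}\qquad(|\xi|\ge1),
\]
the $|h|^2$–integral producing the factor $|\xi|^{2-2\alpha}$ precisely because $\alpha<1$; on the far region the $|\eta|^{-2s}$ and $|\xi|^{-2s}$ terms are estimated directly and give the same order. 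This produces the bound $\sup_n|F_n(\xi)|\lesssim_{s,\alpha,d}|\xi|^{2-2s}\wedge\brak{\xi}^{2-2s-2\alpha}$, together with $F_n(\xi)\to F(\xi)$ pointwise; the same estimates applied to $|\cdot|^{-2s}$ itself show that $F$ in \eqref{eq:functions_Fn} is well defined by an absolutely convergent integral.

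To conclude I emphasize that one \emph{cannot} apply dominated convergence directly to the triple integral: the naive dominator $a_r(\xi)\,\mathcal K(\xi,\eta)\pa{|\eta|^{-2s}+|\xi|^{-2s}}$ is not integrable when $s<1/2$, since integrating the loss term in $\eta$ leaves $\int a_r(\xi)|\xi|^{2-2s}\dd\xi$, which is out of reach of the a priori $L^2$–bound. The cancellation hidden in $F_n$ is thus indispensable, and one must integrate $\eta$ out \emph{first} and then apply dominated convergence at the level of $(r,\xi)\in[0,t]\times\R^d$, with dominator $C\,a_r(\xi)\,\pa{|\xi|^{2-2s}\wedge\brak{\xi}^{2-2s-2\alpha}}$. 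Here the hypothesis $s>1-\alpha$ enters decisively: it makes $2-2s-2\alpha<0$, so on $\{|\xi|\ge1\}$ the dominator is controlled by $a_r$, whose integral is uniformly bounded by $\EE\|\rho_r\|_{L^2}^2\le\|\rho_0\|_{L^2}^2$ (via \eqref{eq:Lp_bound}), while on $\{|\xi|\le1\}$ one uses $\|a_r\|_{L^\infty}\le(2\pi)^{-d}\|\rho_0\|_{L^1}^2$ together with $\int_{|\xi|\le1}|\xi|^{2-2s}\dd\xi<\infty$. The right-hand side then converges to $\int_0^t\int_{\R^d}a_r F\,\dd\xi\,\dd r$, yielding the integrated identity, and differentiating in $t$ (using continuity of $t\mapsto\int a_t F\,\dd\xi$, which follows from \eqref{eq:spectrum} and the uniform bounds) gives \eqref{eq:fourier-balance}.

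The main obstacle is exactly the cancellation bound of the second paragraph, and making it \emph{uniform in the truncation}: the cutoffs must neither destroy the $|\xi|^{-2s-2}$ control of the Hessian (which holds because $\chi_n$ acts at scales $1/n$ and $n$, where the spurious derivative factors are compensated by the size of $|\xi|^{-2s}$) nor spoil the first-order cancellation (which survives because $\psi_n\equiv|\cdot|^{-2s}$ on $B(\xi,|\xi|/2)$ once $n$ is large). Recovering the sharp exponent $2-2s-2\alpha$ in place of the crude $1-2s$ is what lowers the admissible threshold from $s>1/2$ down to $s>1-\alpha$; the precise value and sign of the leading coefficient of $F$, needed for the quantitative balance \eqref{eq:intro_main_thm}, is left to \cref{sec:asymp}.
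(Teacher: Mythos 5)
Your proposal is correct and follows essentially the same route as the paper's proof: truncate $|\xi|^{-2s}$ to a compactly supported test function admissible in \cref{lem:evol_regular_quantities}, prove a uniform-in-$n$ pointwise bound on the truncated flux by exploiting the evenness of $h\mapsto\brak{h}^{-d-2\alpha}|P^\perp_h\xi|^2$ to kill the first-order Taylor term near the diagonal (which is exactly where $s>1-\alpha$ and $\alpha<1$ enter), and then apply dominated convergence after the $\eta$-integration. The only cosmetic differences are that you cut off at both small and large frequencies while the paper only truncates at infinity, and that the paper dispenses with the cancellation argument altogether in the easier range $s\in[1,d/2)$, where the crude bound $|\psi(\eta)-\psi(\xi)|\le|\eta|^{-2s}+|\xi|^{-2s}$ already suffices.
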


% Mario: ho ristretto l'intervallo per $s$ fino a $d/2$ (prima era fino a $d/2+1$): la ragione è che $|\xi|^{-2s}$ non è integrabile per $s\ge d/2$, quindi dovremmo usare un ulteriore cutoff nell'origine. Forse è possibile ma, poiché dopo non ci serve, ho evitato l'estensione.

\begin{proof}
    As mentioned above, \eqref{eq:fourier-balance}-\eqref{eq:functions_Fn} formally amounts to \eqref{eq:evolution_observables} for the choice $\psi(\xi)=|\xi|^{-2s}$, which however has not compact support; in order to prove the claim, we will argue by approximation, introducing suitable cutoffs $\psi^n$, applying \eqref{eq:evolution_observables} to them and then passing to the limit.
    
    Let $\varphi\in C^\infty_c(\R_+,\R_+)$ be a decreasing function such that $\varphi\equiv 1$ on $[0,1]$, $\varphi\equiv 0$ on $[2,+\infty)$ and set $\varphi_n(r):=\varphi(r/n)$.
    Define
    \begin{align}\label{eq:F_n}
    F^n(\xi) := (2\pi)^{-d/2} \int_{\R^d}  |P^\perp_{\xi-\eta}\xi|^2 \frac{1}{\brak{ \xi-\eta}^{d+2\alpha}} \bigg(\frac{\varphi_n(|\eta|)}{|\eta|^{2s}} - \frac{\varphi_n(|\xi|)}{|\xi|^{2s}} \bigg) \dd \eta.
\end{align}
    Then we can rigorously apply \eqref{eq:evolution_observables} (integrated in time), for $\psi_n(\xi):= \varphi_n(|\xi|)/|\xi|^{2s}$, to find
    \begin{equation}\label{eq:approximate_balance}
	\int_{\R^d} a_t(\xi) \psi_n(\xi) \dd \xi = \int_{\R^d} a_0(\xi) \psi_n(\xi) \dd \xi +  \int_0^t \int_{\R^{2d}} a_s(\xi) F^n(\xi) \dd \xi \dd s.
    \end{equation}
    By the assumptions on $\rho_0$ and monotone convergence, we have
    \begin{equation}\label{eq:conv_initial_data}
        \lim_{n\to\infty} \int_{\R^d} a_0(\xi) \psi_n(\xi) \dd \xi = \int_{\R^d} a_0(\xi) |\xi|^{-2s} \dd \xi = \| \rho_0\|_{H^{-s}}^2.
    \end{equation}
    In order to obtain \eqref{eq:fourier-balance}-\eqref{eq:functions_Fn} (or rather, their equivalent integrated-in-time version), it suffices to prove that for any $t>0$ it holds
    \begin{equation}\label{eq:fourier_balance_goal}
        \lim_{n\to\infty} \int_0^t \int_{\R^{2d}} a_s(\xi) F^n(\xi) \dd \xi \dd s
        = \int_0^t \int_{\R^{2d}} a_s(\xi) F(\xi) \dd \xi \dd s;
    \end{equation}
    indeed, the conclusion will then follow from combining \eqref{eq:approximate_balance}, \eqref{eq:conv_initial_data} and \eqref{eq:fourier_balance_goal} and applying monotone convergence to the l.h.s. of \eqref{eq:approximate_balance}.

    Since $\rho_0\in L^1\cap L^2$, by \cref{thm:wellposedness_kraichnan} the associated solution $\rho$ to \eqref{eq:transportnoise} satisfies $\| \rho_t\|_{L^1\cap L^2} \leq \| \rho_0\|_{L^1\cap L^2}$.
    By properties of the Fourier transform, $\hat\rho_t\in L^2\cap L^\infty$, so that $a$ as defined in \eqref{eq:defn_a} belongs to $L^\infty_t (L^1_x\cap L^\infty_x)$.
    By dominated convergence, in order to obtain \eqref{eq:fourier_balance_goal}, it is enough to show that: a) $F^n(\xi)\to F(\xi)$ for every $\xi\in \R^d$; b) $|F^n(\xi)|\leq C + g(\xi)$ for some finite constant $C>0$ and some $g\in L^1$.
    We focus on establishing b), as the convergence in a) is an easy consequence of the same bounds derived therein (again by dominated convergence).    
    We distinguish two cases.

    \textbf{Case 1: $s\in [1,d/2)$.} Observing that
    \[|P^\perp_{\xi-\eta} \xi|^2	= |P^\perp_{\xi-\eta}\eta|^2 \leq \min\{|\xi|^2,|\eta|^2\},\]
    we have the basic estimate
    \begin{align*}
        |P^\perp_{\xi-\eta} \xi|^2 \frac{1}{\brak{ \xi-\eta}^{d+2\alpha}} \Big|\frac{\varphi_n(\eta)}{|\eta|^{2s}}-\frac{\varphi_n(\xi)}{|\xi|^{2s}}\Big|
	& \leq \frac{1}{\brak{ \xi-\eta}^{d+2\alpha}} \Big( \frac{1}{|\eta|^{2s-2}} + \frac{1}{|\xi|^{2s-2}}\Big)
    \end{align*}
    uniformly in $n$. Under our conditions on $s$ and $\alpha$, $\brak{ \cdot}^{-d-2\alpha}\in L^1\cap L^\infty$ and $|\cdot|^{2-2s}=h_1+h_2$ with $h_1\in L^1$, $h_2\in L^\infty$; therefore by Young's convolution inequality
    \begin{align*}
        \sup_{\xi\in \R^d} \int_{\R^{d}} \frac{1}{\brak{ \xi-\eta}^{d+2\alpha}} \frac{1}{|\eta|^{2s-2}} \dd \eta
        \leq \|\brak{ \cdot}^{-d-2\alpha}\|_{L^\infty} \| h_1\|_{L^1} + \|\brak{ \cdot}^{-d-2\alpha}\|_{L^1} \| h_2\|_{L^\infty} <\infty.
    \end{align*}
    A similar bound holds with $|\eta|^{2-2s}$ replaced by $|\xi|^{2-2s}$: since $s>1$, we have
    \begin{align*}
        \int_{\R^d} \frac{1}{\brak{ \xi-\eta}^{d+2\alpha}} \frac{1}{|\xi|^{2s-2}} \dd \eta \lesssim \frac{1}{|\xi|^{2s-2}} \lesssim 1 + \frac{1}{|\xi|^{2s-2}} \one_{|\xi|\leq 1}
    \end{align*}
    where $g(\xi):=|\xi|^{2-2s} \one_{|\xi|\leq 1}\in L^1$. Overall, we obtain the conclusion in this case.

    \textbf{Case 2: $s\in (1-\alpha,1)$.}
    First observe that, for $|\xi|\le 1$, we can argue similarly as in Case 1 to find
    \begin{align*}
        |F^n(\xi)|
        \lesssim |\xi|^2 \int_{\R^d}\frac{1}{\brak{ \xi-\eta}^{d+2\alpha}|\eta|^{2s}} \dd \eta +|\xi|^{2-2s} \int_{\R^d}\frac{1}{\brak{ \xi-\eta}^{d+2\alpha}} \dd \eta
    \lesssim 1,
    \end{align*}
    uniformly in $n$; here we used the fact that $|\eta|^{-2s}\in L^1+L^\infty$ since $s<1$ and $d\geq 2$.

    For fixed $\xi$ with $|\xi|> 1$, it is convenient to split $\R^d$ in three subdomains, given by
    \begin{align*}
        D_1 & =\{\eta\in \R^d: |\eta-\xi|\wedge |\eta|> |\xi|/2\}\\
	D_2 & =\{\eta\in \R^d: |\eta-\xi|>|\xi|/2, \ |\eta|\leq |\xi|/2\}\\
	D_3 & =\{\eta\in \R^d: |\eta-\xi|\leq |\xi|/2\}.
    \end{align*}
    Correspondingly, we set $F^n(\xi)=F^{n,1}(\xi)+F^{n,2}(\xi)+F^{n,3}(\xi)$, where $F^{n,i}$ is the function defined similarly to \eqref{eq:F_n}, but with integral restricted to $\eta\in D_i$, $i=1,2,3$.

    Observing that on $D_1$ it holds $|\eta|>|\xi-\eta|/3$, we have
    \begin{align*}
        |F^{n,1}(\xi)|
        &\lesssim \int_{D_1} \frac{|\xi|^2}{\brak{ \xi-\eta}^{d+2\alpha}|\eta|^{2s}} \dd \eta +\int_{D_1}\frac{|\xi|^{2-2s}}{\brak{ \xi-\eta}^{d+2\alpha}} \dd \eta \\
        &\lesssim \int_{D_1} \frac{1}{\brak{ \xi-\eta}^{d+2\alpha-2+2s}} \dd \eta
        \leq \int_{\R^d} \frac{1}{\brak{\eta'}^{d+2\alpha-2+2s}} \dd \eta'
    \end{align*}
where the last integral is finite, due to the condition $s>1-\alpha$.

    On $D_2$ instead it holds
    \begin{align*}
    |F^{n,2}(\xi)|
        &\lesssim \int_{D_2} \frac{|\xi|^2}{\brak{ \xi-\eta}^{d+2\alpha}|\eta|^{2s}} \dd \eta +\int_{D_2}\frac{|\xi|^{2-2s}}{\brak{ \xi-\eta}^{d+2\alpha}} \dd \eta \\
        &\lesssim \int_{D_2} \frac{1}{\brak{ \xi-\eta}^{d+2\alpha-2}|\eta|^{2s}} \dd \eta +\int_{D_2}\frac{1}{\brak{ \xi-\eta}^{d+2\alpha+2s-2}} \dd \eta \\
        & \lesssim \frac{1}{|\xi|^{d+2\alpha-2}} \int_{|\eta|\le |\xi|/2} \frac{1}{|\eta|^{2s}}  \dd\eta
    +\int_{\R^d}\frac{1}{\brak{ \eta'}^{d+2\alpha+2s-2}} \dd \eta'\\
        &\lesssim \frac{1}{|\xi|^{2\alpha+2s-2}} + 1 \lesssim 1
    \end{align*}
    where the last passage relied on $|\xi|>1$ and $\alpha+s-1>0$.

    On $D_3$, we can expand $\psi_n(\eta)=\varphi_n(|\eta|)|\eta|^{-2s}$ around $\xi$ and apply mean-value theorem to find:
    \begin{align}
    \begin{aligned}\label{eq:F_n1_expansion}
        & \left|F^{n,3}(\xi) - (2\pi)^{-d/2} \int_{D_3}  |P^\perp_{\xi-\eta}\xi|^2 \frac{1}{\brak{ \xi-\eta}^{d+2\alpha}} \nabla \psi_n(\xi) \cdot (\eta-\xi) \dd \eta \right|\\
        &\qquad \lesssim
        \int_{D_3}  |P^\perp_{\xi-\eta}\xi|^2 \frac{1}{\brak{ \xi-\eta}^{d+2\alpha}} \sup_{z\in J_\xi}|D^2 \psi_n(z)| |\eta-\xi|^2 \dd \eta=: \tilde F^n(\xi)
    \end{aligned}
    \end{align}
    where the domain $J_\xi$ is defined by
    \begin{align*}
        J_\xi:=\big\{z=\xi + \theta(\eta-\xi) \text{ for some }\theta\in (0,1) \text{ and } \eta\in D_3\big\}.
    \end{align*}
    Note that
    \begin{align*}
        (\eta-\xi)\mapsto |P^\perp_{\xi-\eta}\xi|^2 \frac{1}{\brak{ \xi-\eta}^{d+2\alpha}} \nabla \psi_n(\xi) \cdot (\eta-\xi)
    \end{align*}
    is an odd function (for fixed $\xi$) and the domain $D_3$ is symmetric in $\eta-\xi$, hence the integral appearing on the left-hand side of the expansion \eqref{eq:F_n1_expansion} is zero:
    \begin{align*}
        \int_{D_3}  |P^\perp_{\xi-\eta}\xi|^2 \frac{1}{\brak{ \xi-\eta}^{d+2\alpha}} \nabla \psi_n(\xi) \cdot (\eta-\xi) \dd \eta =0,
        \quad |F^{n,3}(\xi)| \leq \tilde F^n(\xi).
    \end{align*}
    By construction we have
    \begin{align*}
        |\varphi_n'(r)|\lesssim \frac{1}{n} \one_{n\le r\le 2n} \lesssim \frac{1}{r},\quad |\varphi_n''(r)|\lesssim \frac{1}{n^2} \one_{n\le r\le 2n} \lesssim \frac{1}{r^2};
    \end{align*}
    moreover, since $|\eta-\xi|\leq |\xi|/2$, $|z|\ge |\xi|/2$ for every $z\in J_\xi$.
    Hence, by lengthy but elementary computations, for any $z\in J_\xi$ we get
    \begin{align*}
        |D^2\psi_n(z)| \lesssim \frac{1}{|z|^{2s+2}} +\frac{1}{n}1_{n\le |z|\le 2n}\frac{1}{|z|^{2s+1}} +\frac{1}{n^2}1_{n\le |z|\le 2n}\frac{1}{|z|^{2s}}
    \lesssim \frac{1}{|\xi|^{2s+2}}.
    \end{align*}
    Combining these facts, we obtain
    \begin{align*}
        \tilde F^n(\xi)
        \lesssim \frac{1}{|\xi|^{2s}}\int_{D_3} \frac{|\eta-\xi|^2}{\brak{ \xi-\eta}^{d+2\alpha}} \dd \eta
        \lesssim \frac{1}{|\xi|^{2s}} \int_{|\eta'|<|\xi|/2} \frac{1}{| \eta'|^{d+2\alpha-2}} \dd \eta
        \lesssim |\xi|^{2-2s-2\alpha} \lesssim 1
    \end{align*}
    where the last passage comes again from $|\xi|>1$ and $\alpha+s>1$.
    %Therefore $F_{n,D_1}$ is bounded on $|\xi|>1$, uniformly in $n$.

    Putting everything together, we conclude that the functions $F^n$ are uniformly bounded by a finite constant, which yields the conclusion.
\end{proof}

The procedure presented in \cref{prop:identity_neg_sobolev} is quite robust and extends to other contexts, as it exclusively relies on the knowledge that $\rho$ is an $L^p$-valued weak solution to \eqref{eq:transportnoise}. In particular it allows for the presence of additional, nonlinear terms in the SPDE, and it has been applied successfully in other contexts to show restoration of well-posedness by transport noise, cf. \cite{coghi2023existence,bagnara2024regularization}.

On the other hand, \cref{prop:identity_neg_sobolev} does not cover the values $s\in (0,1-\alpha]$.
To deal with this case, we employ a different strategy, based on the available knowledge that the solutions to our SPDE are unique and recovered by their viscous approximations, cf. \cref{subsec:vanishing_viscosity}.
In particular, we first pass to consider the viscous equation \eqref{eq:transportnoise_viscous}; thanks to the additional regularity bound \eqref{eq:bounds_viscous_kraichnan}, the rigorous justification of Sobolev balances here is more straightforward.
This will allow us to recover bounds on solutions $\rho$ to the inviscid SPDE by taking the limit $\nu\to 0$, see the proof of \cref{thm:main1} further below.

The next statement is the analogue of \cref{prop:identity_neg_sobolev} for the viscous SPDE:

\begin{proposition}\label{prop:identity_neg_sobolev_general}
    Let $\alpha\in (0,1)$, $\nu>0$ and $W$ given by \eqref{eq:kraichnan_noise}. Let $s\in (0,d/2)$, $\rho_0\in L^1\cap L^2$ and $\rho^\nu$ be the associated solution to equation \eqref{eq:transportnoise_viscous}. Then we have
	\begin{equation}\label{eq:fourier-balance-viscous}
    \frac{\dd}{\dd t} \EE\big[\| \rho^\nu_t\|_{\dot H^{-s}}^2\big]
    = \int_{\R^d} F(\xi)\, \EE\big[|\hat{\rho}^\nu_t(\xi)|^2\big] \dd \xi - 2\nu\, \EE\big[\| \rho^\nu_t\|_{\dot H^{-s+1}}^2\big]
\end{equation}
	for the \emph{flux function} $F(\xi)=F(\xi,s,\alpha)$ defined as in \eqref{eq:functions_Fn}.
\end{proposition}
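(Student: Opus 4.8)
The plan is to mimic the proof of \cref{prop:identity_neg_sobolev}, the only novelty being the genuine viscous term, which contributes both the extra dissipation in \eqref{eq:fourier-balance-viscous} and, crucially, the extra integrability that removes the restriction $s>1-\alpha$. First I would establish the viscous analogue of the spectral identity \eqref{eq:spectrum}. Running the same It\^o computation as in \cref{lem:evol_regular_quantities}, now with the deterministic drift $-(\tfrac c2+\nu)|\xi|^2\hat\rho^\nu(\xi)$ in place of $-\tfrac c2|\xi|^2\hat\rho(\xi)$, the square $|\hat\rho^\nu(\xi)|^2$ acquires the additional term $-2\nu|\xi|^2|\hat\rho^\nu(\xi)|^2$, while the martingale part and the noise-generated drift are unchanged. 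Taking expectations, the density $a^\nu_t(\xi):=\EE[|\hat\rho^\nu_t(\xi)|^2]$ solves
\begin{align*}
  \frac{\dd}{\dd t}a^\nu_t(\xi)
  = (2\pi)^{-d/2}\int_{\R^d}\frac{|P^\perp_{\xi-\eta}\xi|^2}{\brak{\xi-\eta}^{d+2\alpha}}\big(a^\nu_t(\eta)-a^\nu_t(\xi)\big)\dd\eta
  - 2\nu|\xi|^2 a^\nu_t(\xi),
\end{align*}
and since $\rho^\nu\in L^\infty_{\omega,t}L^1$ makes $a^\nu$ bounded, testing against any compactly supported integrable $\psi$ is legitimate and produces the tested version of the above, with the flux kernel acting through $\psi(\eta)-\psi(\xi)$ and an extra term $-2\nu\int|\xi|^2 a^\nu_t\,\psi\,\dd\xi$.

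Next I would insert the cutoff weights $\psi_n(\xi)=\varphi_n(|\xi|)|\xi|^{-2s}$ from \cref{prop:identity_neg_sobolev} and integrate in time, obtaining
\begin{align*}
  \int_{\R^d} a^\nu_t\,\psi_n\,\dd\xi
  = \int_{\R^d} a^\nu_0\,\psi_n\,\dd\xi
  + \int_0^t\!\!\int_{\R^d} a^\nu_s\,F^n\,\dd\xi\,\dd s
  - 2\nu\int_0^t\!\!\int_{\R^d}|\xi|^2 a^\nu_s\,\psi_n\,\dd\xi\,\dd s,
\end{align*}
with $F^n$ as in \eqref{eq:F_n}. Since $a^\nu\ge0$ and $\psi_n\nearrow|\xi|^{-2s}$, the left-hand side tends to $\EE[\|\rho^\nu_t\|_{\dot H^{-s}}^2]$ and, because $\rho_0\in L^1\cap L^2\hookrightarrow\dot H^{-s}$ for $s<d/2$, the initial-data term tends to $\|\rho_0\|_{\dot H^{-s}}^2$, both by monotone convergence. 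Likewise $|\xi|^2\psi_n\nearrow|\xi|^{2-2s}$, so the viscous term tends to $-2\nu\int_0^t\EE[\|\rho^\nu_s\|_{\dot H^{-s+1}}^2]\dd s$ by monotone convergence. It remains to pass to the limit in the flux term, for which I need an $n$-independent integrable majorant of $a^\nu_s F^n$ on $[0,t]\times\R^d$.

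The main point — and the reason the viscous case covers the whole range $s\in(0,d/2)$ while \cref{prop:identity_neg_sobolev} does not — is the interplay between a uniform pointwise bound on $F^n$ and the extra regularity of $\rho^\nu$. Writing $F^n$ as the difference of its $\psi_n(\eta)$- and $\psi_n(\xi)$-pieces and using $|P^\perp_{\xi-\eta}\xi|^2\le\min\{|\xi|^2,|\eta|^2\}$ together with $\brak{\cdot}^{-d-2\alpha}\in L^1$, the $\psi_n(\xi)$-piece is bounded at once by $|\xi|^{2-2s}\int\brak{\xi-\eta}^{-d-2\alpha}\dd\eta$, while for the $\psi_n(\eta)$-piece one splits the $\eta$-integral and uses $|P^\perp_{\xi-\eta}\xi|^2\le|\eta|^2$ on $\{|\eta|\le|\xi|\}$ (to absorb the $|\eta|^{-2s}$ singularity) and $|P^\perp_{\xi-\eta}\xi|^2\le|\xi|^2$ on $\{|\eta|>|\xi|\}$ (to retain the $|\eta|^{-2s}$ decay in the far field); this is exactly the regional analysis already carried out on $D_1,D_2,D_3$ in the proof of \cref{prop:identity_neg_sobolev}, except that without the constraint $s>1-\alpha$ the estimates now yield at worst the polynomial growth $|\xi|^{2-2s}$ instead of a constant. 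The resulting uniform bound is $|F^n(\xi)|\lesssim 1+|\xi|^{2-2s}$ for all $s\in(0,d/2)$, and the same estimates give $F^n\to F$ pointwise (dominated convergence in $\eta$). Since $s>0$ gives $2-2s<2$, one has $|a^\nu_s F^n|\lesssim a^\nu_s(1+|\xi|^2)$, and the viscous bound \eqref{eq:bounds_viscous_kraichnan} yields
\begin{align*}
  \int_0^t\!\!\int_{\R^d}(1+|\xi|^2)\,a^\nu_s(\xi)\,\dd\xi\,\dd s
  = \int_0^t\EE\big[\|\rho^\nu_s\|_{L^2}^2+\|\nabla\rho^\nu_s\|_{L^2}^2\big]\dd s
  \le \Big(t+\tfrac{1}{2\nu}\Big)\|\rho_0\|_{L^2}^2<\infty.
\end{align*}
Thus $a^\nu_s(1+|\xi|^2)$ is an $n$-independent integrable majorant, dominated convergence applies to the flux term, and combining the three limits gives the integrated form of \eqref{eq:fourier-balance-viscous}, from which the stated differential identity follows upon differentiating in $t$. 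The one genuine obstacle is keeping the uniform bound $|F^n(\xi)|\lesssim 1+|\xi|^{2-2s}$ clean across the full range of $s$ — in particular choosing $|P^\perp_{\xi-\eta}\xi|^2\le|\eta|^2$ near $\eta=0$ and $\le|\xi|^2$ in the far field, since the single crude choice $\le|\eta|^2$ fails to converge for $s\le 1-\alpha$ — but this is a routine refinement of the estimates already present in \cref{prop:identity_neg_sobolev}.
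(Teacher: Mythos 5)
Your proposal is correct and follows essentially the same route as the paper: derive the viscous spectral identity with the extra $-2\nu|\xi|^2 a^\nu_t(\xi)$ term, insert the cutoffs $\psi_n$, and pass to the limit using the key fact that the viscous energy bound \eqref{eq:bounds_viscous_kraichnan} makes $(1+|\xi|^2)a^\nu_t(\xi)$ integrable, so that the crude majorization $|P^\perp_{\xi-\eta}\xi|^2\lesssim\min\{|\xi|^2,|\eta|^2\}$ suffices without the symmetry/cancellation argument needed for $s\le 1-\alpha$ in the inviscid case. The only (immaterial) difference is that you integrate out $\eta$ first to get a pointwise bound on $F^n$ before dominating, whereas the paper dominates the joint $(\xi,\eta)$-integrand directly.
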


\begin{proof}
    Arguing as in \cref{lem:evol_regular_quantities}, setting $a^\nu_t(\xi):=\EE[|\hat\rho^\nu(\xi)|^2]$, for any sufficiently regular $\psi:\R^d\to\R$ it holds
    \begin{equation*}
	\frac{\dd}{\dd t } \int_{\R^d} a^\nu_t(\xi) \psi(\xi) \dd \xi
	= (2\pi)^{-d/2} \int_{\R^d\times \R^d} a^\nu_t(\xi) |P^\perp_{\xi-\eta}\xi|^2 \frac{1}{\brak{ \xi-\eta}^{d+2\alpha}} (\psi(\eta)-\psi(\xi)) \dd \xi \dd \eta -2\nu \int_{\R^d} a^\nu_t(\xi) |\xi|^2 \psi(\xi) \dd \xi.
\end{equation*}
    We can then argue as in \cref{prop:identity_neg_sobolev}, approximating $\psi(\xi)=|\xi|^{-2s}$ by suitable cutoffs $\psi^n$ and trying to pass to the limit as $n\to\infty$.
    In the regime $s\in (1-\alpha,d/2)$, the proof proceeds identically as therein, so we only need to consider the case $s\in (0,1-\alpha]$.

    As $\nu>0$, the dominated convergence argument is much simpler thanks to the bounds \eqref{eq:bounds_viscous_kraichnan}, ensuring that $(1+|\xi|^2) a_t(\xi)\in L^1_{t,\xi}$. In particular, it holds
    \begin{align*}
        \Big| a^\nu_t(\xi) |P^\perp_{\xi-\eta}\xi|^2 \frac{1}{\brak{ \xi-\eta}^{d+2\alpha}} (\psi^n(\eta)-\psi^n(\xi))\Big|
        & \leq a^\nu_t(\xi) |\xi|^2 \frac{1}{\brak{ \xi-\eta}^{d+2\alpha}} \Big( \frac{1}{|\eta|^{2s}} + \frac{1}{|\xi|^{2s}}\Big)\\
        & \leq a^\nu_t(\xi) |\xi|^2 \frac{1}{\brak{ \xi-\eta}^{d+2\alpha}} \frac{1}{|\eta|^{2s}} + a^\nu_t(\xi) |\xi|^{2-2s} \frac{1}{\brak{ \xi-\eta}^{d+2\alpha}}
    \end{align*}
    where the last term is integrable over $\R^d\times\R^d$ (again by the estimates \eqref{eq:bounds_viscous_kraichnan} and properties of convolutions) and independent of $n$.
    We can therefore apply dominated convergence to obtain the conclusion.
\end{proof}

% \begin{remark}
%     While we may have argued using the viscous approximation \eqref{eq:transportnoise_viscous} from the very beginning, we have still kept the proof of \cref{prop:identity_neg_sobolev} without extra viscosity, because it could be useful for uniqueness arguments.
% \end{remark}

%%%%%%%%%%%%%%%%%%%%%%%%%%%%%%%%%%%%%%%%%%%%%%%%%%%
\section{Asymptotic Behavior of Flux Functions}\label{sec:asymp}

Having established \cref{prop:identity_neg_sobolev,prop:identity_neg_sobolev_general}, the problem of understanding the evolution of negative Sobolev norms for $\rho$
is translated into the purely analytic problem of establishing estimates for $F(\xi)$.
%We will need in particular to understand its asymptotic behavior as $|\xi|\to\infty$ and as $|\xi|\to 0$ respectively.
We will need in particular to understand the leading order terms in the asymptotic expansion of $F$.
%The high-frequency asymptotic is the more challenging one:
We will apply an argument from asymptotic analysis, rewriting $F$ by means of Parseval formula for Mellin transforms, thus translating the problem into an Abelian theorem relying on properties of Mellin transforms which can be represented in terms of special functions.
The following paragraph collects all necessary preliminaries.

%---------------------------------------------------
\subsection{Mellin Transforms and Parseval Formula}\label{subsec:abeliantheorem}

Let
\begin{equation*}
    J(\lambda)=\int_0^\infty h(\lambda t) f(t) d t,
\end{equation*}
where $h,f$ are locally integrable functions such that $J(\lambda)$ 
is absolutely convergent for large enough $\lambda>0$.
The Mellin transform of $f$ is defined by
\begin{equation*}
    M[f,z]=\int_0^\infty t^{z-1} f(t) dt
\end{equation*}
for $z\in \C$ for which the integral is absolutely convergent. When $f(t)$ (resp. $h(t)$) satisfies
\begin{equation*}
    f(t)\sim t^{a}, \quad t\to 0; 
        \quad
        f(t)\sim t^{b}, \quad t\to \infty,
\end{equation*}
with $a>b$, its Mellin transform 
is well-defined (the integral absolutely converges) on the vertical strip $-a<\re z<-b$ (its \emph{fundamental strip}).
We will make essential use of the Parseval formula for Mellin transforms:
\begin{equation}\label{eq:parseval}
    J(\lambda)=\frac1{2\pi i}\int_{r-i\infty}^{r+i\infty} \frac{M[h,z]M[f,1-z]}{\lambda^z} dz,
\end{equation}
which holds for any $r\in\R$ such that the line $r+i\R$ is entirely included in the intersection of the fundamental strips of the
involved Mellin transforms, provided that the integral on the right-hand side is absolutely convergent
(see \cite[Section 3.1.3]{Paris2001}).

If both $M[f,z]$ and $M[h,z]$ are well-defined on non-empty fundamental strips with non-empty intersection, and they can be 
analytically continued to the whole $\C$ as meromorphic functions, the difference of the right-hand side of \eqref{eq:parseval} for two different values of $r$ can be evaluated by regarding it as a contour integral (over the boundary of a strip) and applying residue calculus. 
More specifically, if $r>0$ lies in the fundamental strips of both $M[h,z]$ and $M[f,1-z]$, and 
$r'>r$ is such that $r'+i\R$ does not contain poles of the meromorphic functions $M[f,z]$ and $M[h,z]$,
\begin{equation}\label{eq:parsevaldiff}\begin{split}
    J(\lambda) &=\frac1{2\pi i}\int_{r-i\infty}^{r+i\infty} \frac{M[h,z]M[f,1-z]}{\lambda^z} dz\\
    & =\sum_{r<\re z< r'} \res\set{-\lambda^{-z}M[h,z]M[f,1-z]}+\frac1{2\pi i}\int_{r'-i\infty}^{r'+i\infty} \frac{M[h,z]M[f,1-z]}{\lambda^z} dz,
%         +\frac1{2\pi i}\int_{r-i\infty}^{r+i\infty} \lambda^{-z}M[h,z]M[f,1-z] dz
\end{split}\end{equation}
so that a suitable bound for the integral on the right-hand side implies an asymptotic expansion for $J(\lambda)$. 
This is a classical procedure in asymptotic analysis, we refer to the monograph \cite{Paris2001} for a detailed treatment.

% Such an argument allows to prove the following result, for the proof of which (and a more detailed discussion on the topic) we refer to 

% \begin{theorem}\label{thm:asymp}\cite[Theorem 5]{Handelsman1971}
%     Assume that $M[f,z],M[h,z]$ can be analytically continued to meromorphic functions of $\C$.
%     Assume moreover that, for $x$ in a set $X\subseteq \R$ intersecting the strips in which
%     $M[f,z],M[h,z]$ converge absolutely and including arbitrarily large positive values,
%     it holds $M[h,x+iy]=o(|y|^{-n})$ for arbitrarily large $n$ as $y\to \pm\infty$.
%     Then, for  $r>(1+b_0)\vee 0$ it holds
%     \begin{multline}\label{eq:parseval}
%         J(\lambda)=\sum_{(1+b_0)\vee 0<\re z< r} \res\set{-\lambda^{-z}M[h,z]M[f,1-z]}\\
%         +\frac1{2\pi i}\int_{r-i\infty}^{r+i\infty} \lambda^{-z}M[h,z]M[f,1-z] dz,
%         \quad \lambda\to\infty,
%     \end{multline}
%     where the integral is $O(\lambda^{-r})$ as $\lambda\to\infty$.
% \end{theorem}

In order to apply this argument in our specific case we will resort to representations of Mellin transforms in terms of special functions.
We will make extensive use of Euler's Gamma function $\Gamma(z)=M[e^{-t},z]$, and we recall its functional equation, the reflection and duplication formulae,
\begin{equation*}
    z\Gamma(z)=\Gamma(z+1),\quad 
    \pi=\Gamma(z)\Gamma(1-z)\sin(\pi z), \quad
    \Gamma(z)\Gamma(z+1/2)=\sqrt\pi 2^{1-2z}\Gamma(2z),
\end{equation*}
which we will often use (without further mention) to perform calculations and simplify formulae.
We recall the standard Beta integral: for $a>0$, $0<\re z<2a$,    \begin{equation}\label{eq:mellinbeta}
        \int_0^\infty \frac{t^{z-1}}{(1+t^2)^a}dt=
        \frac{\Gamma(z/2)\Gamma(a-z/2)}{2\Gamma(a)}
    \end{equation} 
(see for instance \cite[6.2 (30)]{Erdelyi1954}). 
We will also use the following non-trivial integral representation:

\begin{lemma}\label{lem:integral}
    For $0<s<d/2+1$, $0<\re w<2s$,
    \begin{equation}\label{eq:mellinf}
        \int_0^\infty t^{w-1} \int_0^\pi \frac{(\sin\theta)^d }{(1-2t \cos \theta+t^2)^s}d\theta dt
        =\frac{\sqrt\pi \gamhalf{d-2s+2}\gamhalf{d+1}}{2\Gamma(s)}
        \cdot \frac{\gamhalf{2s-w}\gamhalf{w}}{\gamhalf{d-w+2}\gamhalf{d-2s+2+w}}.
    \end{equation}
\end{lemma}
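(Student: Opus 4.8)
The plan is to evaluate the double integral by introducing a Schwinger (Gamma-function) parameter, so that the angular integral collapses to a modified Bessel function. First I would write, for each fixed $t,\theta$,
\begin{equation*}
    \frac{1}{(1-2t\cos\theta+t^2)^s}=\frac{1}{\Gamma(s)}\int_0^\infty u^{s-1}e^{-u(1-2t\cos\theta+t^2)}\dd u,
\end{equation*}
and interchange the order of integration so that the $\theta$-integral sits innermost. Pulling out $e^{-u(1+t^2)}$, the angular integration is the classical Poisson-type representation of the modified Bessel function: with $z=2ut$ and $\nu=d/2$ (legitimate since $d\geq 2$),
\begin{equation*}
    \int_0^\pi (\sin\theta)^d\, e^{z\cos\theta}\dd\theta=\sqrt\pi\,\gamhalf{d+1}\,(2/z)^{d/2}\,I_{d/2}(z).
\end{equation*}
This rewrites the left-hand side of \eqref{eq:mellinf} as a positive two-fold integral in $(t,u)$ of $t^{\,w-1-d/2}u^{\,s-1-d/2}e^{-u-ut^2}I_{d/2}(2ut)$ up to the explicit constant $\sqrt\pi\,\gamhalf{d+1}/\Gamma(s)$.

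Next I would carry out the $t$- and $u$-integrals in turn using standard special-function integrals. The $t$-integral is of the form $\int_0^\infty t^{\mu-1}e^{-\alpha t^2}I_\nu(\beta t)\dd t$ with $\mu=w-d/2$, $\nu=d/2$, $\alpha=u$, $\beta=2u$; evaluating it (so that $\tfrac{\mu+\nu}{2}=\tfrac w2$ and $\tfrac{\beta^2}{4\alpha}=u$) produces a confluent hypergeometric factor and an explicit power, namely $\tfrac{1}{2}\Gamma(\tfrac w2)\,\Gamma(\tfrac d2+1)^{-1}\,u^{(d-w)/2}\,{}_1F_1(\tfrac w2;\tfrac d2+1;u)$. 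Substituting back, the power of $u$ collapses to $s-\tfrac w2-1$ and the remaining $u$-integral is the Laplace-type transform
\begin{equation*}
    \int_0^\infty u^{\,s-w/2-1}e^{-u}\,{}_1F_1\!\left(\tfrac w2;\tfrac d2+1;u\right)\dd u=\Gamma\!\left(s-\tfrac w2\right)\,{}_2F_1\!\left(\tfrac w2,\,s-\tfrac w2;\,\tfrac d2+1;\,1\right).
\end{equation*}
The Gaussian series at argument $1$ is then summed by Gauss's theorem, which under $\re(\tfrac d2+1-s)>0$ gives
\begin{equation*}
    {}_2F_1\!\left(\tfrac w2,\,s-\tfrac w2;\,\tfrac d2+1;\,1\right)=\frac{\gamhalf{d+2}\,\gamhalf{d-2s+2}}{\gamhalf{d-w+2}\,\gamhalf{d-2s+2+w}}.
\end{equation*}
Collecting the prefactors and cancelling the common $\Gamma(\tfrac d2+1)=\gamhalf{d+2}$ then reproduces exactly the right-hand side of \eqref{eq:mellinf}, a computation one can check Gamma-factor by Gamma-factor.

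The hard part is not any single evaluation but the justification of the interchanges and the convergence bookkeeping. After the Schwinger step the integrand is genuinely nonnegative for real $w,s$ (and $I_{d/2}>0$, and ${}_1F_1$ has positive coefficients), so Tonelli applies directly on the range where each iterated integral converges; the complex-$w$ statement then follows by analytic continuation across the strip. The $t$-integral needs only $\re w>0$, while the $u$-integral and the Gauss sum impose $\re w<2s$ and $s<\tfrac d2+1$, so the natural domain is the full strip $0<\re w<2s$, of which the stated hypothesis $0<\re w<s$ is a convenient subrange. The one delicate point is that the Laplace transform of ${}_1F_1$ is being used at the boundary value $1$ of its argument, precisely at the edge of absolute convergence; I would handle this as the limit $p\downarrow 1$ of $\int_0^\infty u^{\gamma-1}e^{-pu}{}_1F_1(a;b;u)\dd u=\Gamma(\gamma)p^{-\gamma}{}_2F_1(a,\gamma;b;1/p)$, using continuity of ${}_2F_1(\cdot;z)$ up to $z=1$ when $\re(b-a-\gamma)>0$. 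Finally, the involution $t\mapsto 1/t$ shows the original integral is invariant under $w\mapsto 2s-w$, which matches the manifest symmetry of the claimed formula and provides both a sanity check and a way to extend the identity across the whole strip.
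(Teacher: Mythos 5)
Your argument is correct and reproduces \eqref{eq:mellinf} exactly: with $a=\tfrac w2$, $b=s-\tfrac w2$, $c=\tfrac d2+1$ the Gauss evaluation yields $\gamhalf{d+2}\gamhalf{d-2s+2}\big/\big(\gamhalf{d-w+2}\gamhalf{d-2s+2+w}\big)$, the factor $\gamhalf{d+2}=\Gamma(\tfrac d2+1)$ cancels against the denominator produced by the Weber-type $t$-integral, and $\Gamma(s-\tfrac w2)=\gamhalf{2s-w}$ supplies the remaining numerator. The route, however, is genuinely different from the paper's. The paper performs the $t$-integration first via the tabulated Mellin transform \eqref{eq:mellinlegendre}, which produces an associated Legendre function $P^{1/2-s}_{z-s-1/2}(\cos\theta)$, and then disposes of the angular integral with \eqref{eq:powerlegendre}; you instead decouple the denominator with a Schwinger parameter, collapse the angular integral to $I_{d/2}$ by the Poisson representation, and then run the chain Weber integral $\to$ Laplace transform of ${}_1F_1$ $\to$ Gauss summation. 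What each buys: the paper's proof is essentially a two-formula table lookup, but it leans on an identity whose standard tabulations are misprinted and require an errata (as the paper itself warns); your chain uses only bulletproof classical identities with a manifestly nonnegative integrand, so Tonelli justifies every interchange for real $w\in(0,2s)$ and analytic continuation (or direct domination of $|t^{w-1}|$ by $t^{\re w-1}$) handles complex $w$, and your Abel-limit treatment of the boundary case $p\downarrow1$ is sound (one can even apply Tonelli to the full triple integral and sum the ${}_1F_1$ series term by term). A further merit of your derivation is that the hypothesis $\re(c-a-b)=\tfrac d2+1-s>0$ needed for Gauss's theorem is exactly the condition for absolute convergence of the left-hand side near $t=1$, $\theta=0$, visible on the right-hand side as the pole of $\gamhalf{d-2s+2}$ at $s=\tfrac d2+1$; the lemma's bare hypothesis ``$s>0$'' silently assumes this, though it is harmless in the application where $s<d/2$. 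Your observations that the natural strip is $0<\re w<2s$ (which is in fact the strip the paper uses when quoting the lemma for $M[f,1-z]$) and that the involution $t\mapsto 1/t$ yields the $w\mapsto 2s-w$ symmetry of both sides are also correct.
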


\begin{proof}
    \Cref{eq:mellinf} can be obtained by combining known integral representations of special functions. In particular, we recall that:
\begin{itemize}
    \item for $s>0,$ $\theta \in (-\pi,\pi)$ and $0<\re z<2s$,
    \begin{align}\label{eq:mellinlegendre}
        \int_0^\infty \frac{t^{z-1}}{(1+2t \cos\theta+t^2)^s}dt
        &=
        \frac{2^s \Gamma(s+1/2) \Gamma(z)\Gamma(2s-z)}{\sqrt 2 \Gamma(2s)} (\sin\theta)^{s-1/2} P^{1/2-s}_{z-s-1/2}(\cos \theta)\\ \nonumber
        &=
        \frac{\sqrt{2\pi} \Gamma(z)\Gamma(2s-z)}{2^s \Gamma(s)} (\sin\theta)^{s-1/2} P^{1/2-s}_{z-s-1/2}(\cos \theta),
    \end{align}
    where $P^\mu_\nu$ is the associated Legendre function of first kind, see \cite[14.3]{Olver2010};
    \item for $2\re \eta> |\re \mu|$,
    \begin{multline}\label{eq:powerlegendre}
        \int_{-1}^1 (1-x^2)^{\eta-1} P^\mu_\nu(x)dx
        =\frac{\pi 2^\mu \Gamma(\eta+\mu/2)\Gamma(\eta-\mu/2)}{\Gamma(\eta+\nu/2+1/2)\Gamma(\eta-\nu/2)\Gamma(1+\nu/2-\mu/2)\Gamma(1/2-\nu/2-\mu/2)}.
    \end{multline}
\end{itemize}
The formula \eqref{eq:powerlegendre} is found in \cite[7.132 (1)]{Gradshteyn2007}. The formula \eqref{eq:mellinlegendre} is reported in
\cite[6.2 (22)]{Erdelyi1954} and \cite[3.252 (10)]{Gradshteyn2007}, but those references contain typographical errors: an
\emph{errata corrige} including a derivation was published in \cite{Young1970} for an older edition of \cite{Gradshteyn2007}.
\end{proof}

%---------------------------------------------------
\subsection{Asymptotic expansion of flux functions}\label{subsec:highlow}

\begin{proposition}\label{prop:highfreqasymp}
    Let $\alpha\in (0,1)$ and $s\in (0,d/2)$, $F$ be defined as in \eqref{eq:functions_Fn}. 
    There exist finite constants $K_{d,\alpha,s},C_{d,\alpha,s}>0$ depending on their subscripts such that
    \begin{equation}\label{eq:mainasymp}
        |F(\xi) +K_{d,\alpha,s} |\xi|^{2-2\alpha-2s}|\leq C_{d,\alpha,s} |\xi|^{-2s} \quad \forall\,\xi\in \R^d\setminus\{0\},
    \end{equation}
    where
    \begin{equation}\label{eq:formula_K_dalphas}
        K_{d,\alpha,s}= -
        \frac{(d+1) \Gamma(s+\alpha)\Gamma(-\alpha)\gamhalf{d-2s+2}}{2^{d/2}\pi^{1/2}\Gamma(s)\gamhalf{d+2\alpha+2}\gamhalf{d-2s+2-2\alpha}}>0.
    \end{equation}
\end{proposition}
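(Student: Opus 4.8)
The plan is to use the isotropy of the integrand to reduce $F$ to a one-dimensional integral in $\lambda:=|\xi|$, and then extract its large-$\lambda$ behaviour through the Mellin--Parseval machinery of \cref{subsec:abeliantheorem}. First I would note that the integrand in \eqref{eq:functions_Fn} is invariant under the simultaneous rotation $\xi\mapsto O\xi,\ \eta\mapsto O\eta$ (using $P^\perp_{Ov}O=OP^\perp_v$ and $|Ov|=|v|$), so that $F(\xi)=F(\lambda e)$ depends only on $\lambda$, for $e$ a fixed unit vector. Substituting $u=\xi-\eta$ and passing to polar coordinates $u=r\omega$, with $\theta$ the angle between $\omega$ and $e$, one has $|P^\perp_u\xi|^2=\lambda^2\sin^2\theta$ and $|\xi-u|^2=r^2\bigl(1-2(\lambda/r)\cos\theta+(\lambda/r)^2\bigr)$. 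Integrating out the $S^{d-2}$-factor turns the angular integral into $\Psi(\lambda/r):=\int_0^\pi(\sin\theta)^d\bigl(1-2(\lambda/r)\cos\theta+(\lambda/r)^2\bigr)^{-s}\,\dd\theta$, which is exactly the function whose Mellin transform is computed in \cref{lem:integral}. After the change of variable $t=1/r$ this brings $F$ to the form
\begin{equation*}
    F(\lambda e)=C_0\,\lambda^2\Big[\int_0^\infty \Psi(\lambda t)\,f(t)\,\dd t - \frac{B_d}{\lambda^{2s}}\int_0^\infty \frac{r^{d-1}}{(1+r^2)^{d/2+\alpha}}\,\dd r\Big],
\end{equation*}
where $C_0=(2\pi)^{-d/2}\omega_{d-2}$, $B_d=\int_0^\pi(\sin\theta)^d\,\dd\theta$, and $f(t)=t^{2s+2\alpha-1}(1+t^2)^{-d/2-\alpha}$; the subtracted term, coming from the $-|\xi|^{-2s}$ in \eqref{eq:functions_Fn}, is a pure power $\mathrm{const}\cdot\lambda^{-2s}$ evaluated by the Beta integral \eqref{eq:mellinbeta}.

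The first integral $G(\lambda):=\int_0^\infty\Psi(\lambda t)f(t)\,\dd t$ is precisely of the Parseval form $J(\lambda)=\int_0^\infty h(\lambda t)f(t)\,\dd t$ with $h=\Psi$. I would compute $M[\Psi,z]$ from \cref{lem:integral} and $M[f,1-z]$ from \eqref{eq:mellinbeta}, obtaining
\begin{equation*}
    M[f,1-z]=\frac{\gamhalf{2s+2\alpha-z}\,\gamhalf{d-2s+z}}{2\,\gamhalf{d+2\alpha}},
\end{equation*}
and then check that the fundamental strips of $M[\Psi,\cdot]$ and $M[f,1-\cdot\,]$ overlap in $0<\re z<2s$ (here $s<d/2$ is used), so that \eqref{eq:parseval} represents $G$ on any line $\re z=r\in(0,2s)$. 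Since $\Psi(t)\to B_d$ as $t\to0$ and $\Psi(t)\sim B_d t^{-2s}$ as $t\to\infty$ (by the symmetry $\Psi(t)=t^{-2s}\Psi(1/t)$), the transform $M[\Psi,z]$ has simple poles at $z=0$ and $z=2s$ with residues $B_d$ and $-B_d$; this is the structural fact driving the cancellation below.

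The core of the argument is the contour shift \eqref{eq:parsevaldiff}: I would move the line of integration from $\re z=r\in(0,2s)$ to $\re z=r'\in(2s+2,\,2s+2\alpha+2)$, which is legitimate because $\alpha\in(0,1)$ guarantees that the simple poles crossed are exactly $z=2s,\ 2s+2\alpha,\ 2s+2$, in this order. The residue at $z=2s$ equals $B_d\,\lambda^{-2s}M[f,1-2s]$ and, after multiplication by $C_0\lambda^2$, \emph{exactly cancels} the subtracted Beta term (since $M[f,1-2s]$ coincides with that integral)---this is precisely why the flux $F$, being a difference, decays faster than either of its two pieces. The residue at $z=2s+2\alpha$, where the pole now comes from $\gamhalf{2s+2\alpha-z}$ in $M[f,1-z]$, produces the leading contribution $-K_{d,\alpha,s}\lambda^{2-2\alpha-2s}$; a direct residue computation, simplified through the functional, reflection and duplication formulae for $\Gamma$, gives the leading coefficient in the closed form \eqref{eq:formula_K_dalphas}, whose positivity follows from $\Gamma(-\alpha)<0$ for $\alpha\in(0,1)$. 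Finally the residue at $z=2s+2$ contributes a term of order $\lambda^{-2s}$, and the shifted integral over $\re z=r'$ is $O(\lambda^{-r'})$, hence $O(\lambda^{-2s})$ after multiplying by $C_0\lambda^2$ because $r'>2s+2$.

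The main obstacle is the quantitative last step: to upgrade the asymptotic expansion to the clean bound \eqref{eq:mainasymp} valid for \emph{all} $\xi$, one must control the remainder $\tfrac1{2\pi i}\int_{r'+i\R}\lambda^{-z}M[\Psi,z]M[f,1-z]\,\dd z$ uniformly in $\lambda\ge1$. This needs a Stirling-type estimate showing that, along the vertical line, the four $z$-dependent numerator $\Gamma$-factors and the two $z$-dependent denominator $\Gamma$-factors combine to decay like $e^{-\pi|\im z|/2}$, so the integral converges absolutely to a constant multiple of $\lambda^{-r'}$; this is what makes the error genuinely $O(\lambda^{-2s})$ rather than merely asymptotic. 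The complementary regime $|\xi|\le1$ is then easy: bounding $|P^\perp_{\xi-\eta}\xi|^2\le|\xi|^2$ and splitting $|\eta|^{-2s}\in L^1+L^\infty$ (using $2s<d$) gives $|F(\xi)|\lesssim|\xi|^{2-2s}\le|\xi|^{-2s}$, while $K_{d,\alpha,s}|\xi|^{2-2\alpha-2s}\le K_{d,\alpha,s}|\xi|^{-2s}$ since $2-2\alpha>0$, so \eqref{eq:mainasymp} holds there with room to spare.
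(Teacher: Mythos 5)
Your proposal is correct and follows essentially the same route as the paper: the same splitting of $F$ into a Mellin--Parseval convolution part and an explicit Beta-integral part, the same use of \cref{lem:integral} and \eqref{eq:mellinbeta}, and the same contour shift \eqref{eq:parsevaldiff} across three simple poles, with the first residue cancelling the subtracted term, the second producing the leading $-K_{d,\alpha,s}|\xi|^{2-2\alpha-2s}$, and the third plus the Stirling-controlled shifted integral giving the $O(|\xi|^{-2s})$ error. The only (immaterial) difference is that you scale the angular factor $\Psi$ and keep the Beta kernel as the fixed profile, so your poles sit at $2s,\ 2s+2\alpha,\ 2s+2$ instead of the paper's $d,\ d+2\alpha,\ d+2$ --- this is the paper's setup read through the substitution $t=1/r$ --- and your separate elementary treatment of the regime $|\xi|\leq 1$ is a sensible extra precaution.
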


\begin{proof}
For $\xi\neq0$, $\alpha\in (0,1)$ and $s\in (0,d/2)$,
we may rewrite the integral \eqref{eq:functions_Fn} that defines $F$ as
\begin{align}\label{eq:Fdecomp}
    F(\xi)&= (2\pi)^{-d/2}\int_{\R^d} \frac{|P_{\xi-\eta}^\perp \xi|^2}{\brak{ \xi-\eta}^{d+2\alpha}} 
    \pa{\frac{1}{|\eta|^{2s}}-\frac{1}{|\xi|^{2s}}} \dd \eta\\ \nonumber
    %&= \int_{\R^d} \frac{|n|^2|h|^2-(n\cdot h)^2}{\brak{ h}^{d+2\alpha}|h|^2} 
    %\pa{\frac{1}{|n-h|^{2s}}-\frac{1}{|n|^{2s}}} dk.
    &= (2\pi)^{-d/2} \bigg[ \int_{\R^d} \frac{|P_{\eta'}^\perp \xi|^2}{\brak{\eta'}^{d+2\alpha}} \frac{1}{|\xi-\eta'|^{2s}} \dd \eta' 
    - \frac{1}{|\xi|^{2s}} \int_{\R^d} \frac{|P_{\eta'}^\perp \xi|^2}{\brak{\eta'}^{d+2\alpha}} \dd \eta'\bigg]=: (2\pi)^{-d/2} [I(\xi)-G(\xi)].
\end{align}
We will analyze the two summands separately.
Using the symmetries of the projection operator $P^\perp$ and passing to polar coordinates, we can compute:
\begin{align*}
    G(\xi)
    & = \frac{1}{|\xi|^{2s}} \int_{\R^d} \frac{|P_{\eta}^\perp(|\xi| e_1)|^2}{\brak{\eta}^{d+2\alpha}} \dd \eta
    = |\xi|^{2-2s} \int_{\R^d} \frac{1-\eta_1^2/|\eta^2|}{\brak{\eta}^{d+2\alpha}} d\eta\\
    & = |\xi|^{2-2s} \omega_{d-2} \int_0^{+\infty} \frac{r^{d-1}dr}{(1+r^2)^{d/2+\alpha}} \int_0^\pi \sin^d\theta d\theta
    = \omega_{d-2} \frac{\Gamma\left(d/2\right) \Gamma\left(\alpha\right)}{2 \Gamma\left(d/2+\alpha\right)}\cdot \frac{\sqrt\pi \gamhalf{d+1}}{\gamhalf{d+2}} |\xi|^{2-2s},
\end{align*}
the last step following from \eqref{eq:mellinbeta} and another standard Beta integral:
for $\gamma,\eta>-1$,
\begin{equation}\label{eq:mellinbetatrig}
        \int_0^\pi (\sin \theta)^\gamma (\cos \theta)^\eta d \theta=
        \frac{\gamhalf{\gamma+1}\gamhalf{\eta+1}}{\gamhalf{\gamma+\eta+2}}
\end{equation} 
(see \cite[3.621 (5)]{Gradshteyn2007}).
% \red{OLD LUCIO che a Francesco non tornava
% \begin{align*}
%     G(\xi)
%     & =  \frac{1}{|\xi|^{2s}} \int_{\R^d} \frac{|P_{\eta}^\perp(|\xi| e_1)|^2}{\brak{\eta}^{d+2\alpha}} \dd \eta
%     = \frac{d-1}{d} |\xi|^{2-2s} \int_{\R^d} \frac{d\eta}{\brak{\eta}^{d+2\alpha}}\\
%     & = \frac{d-1}{d} |\xi|^{2-2s} \omega_{d-1} \int_0^{+\infty} \frac{r^{d-1}}{(1+r^2)^{d/2+\alpha}} \dd r,
% \end{align*}
% }
We are thus left to study the term $I(\xi)$, which by using radial symmetries and polar coordinates can be written as
\begin{align*}
    I(\xi)
    & = \int_{\R^d} \frac{|P_{\eta}^\perp (|\xi|e_1)|^2}{\brak{\eta}^{d+2\alpha}} \frac{1}{||\xi|e_1-\eta|^{2s}} \dd \eta
      = |\xi|^{d+2-2s} \int_{\R^d} \frac{|P_{\eta}^\perp (e_1)|^2}{\brak{|\xi|\eta}^{d+2\alpha}} \frac{1}{|e_1-\eta|^{2s}} \dd \eta\\
    & = |\xi|^{d+2-2s} \int_{\R^d} \frac{1-\eta_1^2/|\eta|^2}{(1 + |\xi|^2 |\eta|^2)^{d/2+\alpha}}\, \frac{1}{|1-2\eta_1 + |\eta|^2|^s} \dd \eta\\
    & = |\xi|^{d+2-2s} \int_0^{+\infty} \frac{r^{d-1}}{(1+|\xi|^2 r^2)^{d/2+\alpha}} \int_{S^{d-1}} \frac{\sin^2 \theta_1}{|1-2r\cos\theta_1+r^2|^s}\dd \sigma_{d-1} \dd r\\
    & = \omega_{d-2} |\xi|^{d+2-2s} \int_0^{+\infty} \frac{r^{d-1}}{(1+|\xi|^2 r^2)^{d/2+\alpha}} \int_0^\pi \frac{(\sin\theta)^d }{|1-2r \cos \theta+r^2|^{s}} d\theta \dd r.
\end{align*}
Setting $\lambda=|\xi|$, we are therefore reduced to study the integral
\begin{equation*}
    J(\lambda)=\int_0^\infty h(\lambda r) f(r) dr,\quad 
    f(r) =r^{d-1} \int_0^\pi \frac{\sin^d\theta d\theta}{|1-2r \cos \theta+r^2|^s},
    \quad 
    h(r)=\frac{1}{\left(1+r^2\right)^{d / 2+\alpha}},
\end{equation*}
to which we apply the strategy outlined in the previous paragraph.

By \eqref{eq:mellinbeta}, the Mellin transform of $h$ is analytically continued to the meromorphic function
\begin{equation}\label{eq:mellinh}
     M[h, z]=\frac{\Gamma\left(z/2\right) \Gamma\left(d/2+\alpha-z/2\right)}{2 \Gamma\left(d/2+\alpha\right)},
\end{equation}
having simple poles at points $-2\N \cup (d+2\alpha+2\N)$, with fundamental strip $0<\re z< d+2\alpha$.
The relevant pole in our computation is $z=d+2\alpha$, for which:
\begin{equation*}
    \res_{z=d+2\alpha}\bra{M[h,z]}=-1.
\end{equation*}
By \eqref{eq:mellinf}, the Mellin transform of $f$ is analytically continued to the meromorphic function
\begin{equation*}
    M[f,1-z]
    =\int_0^\infty r^{d-1-z} \int_0^\pi \frac{\sin^d\theta d\theta}{|1-2r \cos \theta+r^2|^s}
    % &=\frac{\sqrt \pi\Gamma\pa{\frac{d+1}{2}}\Gamma\pa{\frac{d+2s}{2}}}{2\Gamma(s)}
    %     \cdot \frac{\Gamma\pa{\frac{d-z}{2}}\Gamma\pa{s-\frac{d-z}{2}}}{\Gamma\pa{d+\frac{1-z}{2}}\Gamma\pa{s+\frac{1+z}{2}}}\\
    =\frac{\sqrt\pi \gamhalf{d-2s+2}\gamhalf{d+1}}{2\Gamma(s)}
        \cdot \frac{\gamhalf{2s-d+z}\gamhalf{d-z}}{\gamhalf{z+2}\gamhalf{2d-2s+2-z}},    
\end{equation*}
having simple poles at points $(d+2\N)\cup (d-2s-2\N)$, with fundamental strip $d-2s< \re z <d$ (a subset of that of $M[h,z]$).
The relevant poles in our computation are $z=d$ and $z=d+2$, for which:
\begin{equation*}
    \res_{z=d}\bra{M[f,1-z]}=
        -\frac{\sqrt\pi \gamhalf{d+1}}{\gamhalf{d+2}},\qquad
    \res_{z=d+2}\bra{M[f,1-z]}=
        \frac{\sqrt\pi (d-2s+2)(s+1) \gamhalf{d+1}}{4\gamhalf{d+4}}.
\end{equation*}

We can now apply \eqref{eq:parsevaldiff} in order to obtain an asymptotic expansion for $J(\lambda)$. 
We choose $r\in (d-2s,d)$, the latter being the intersection of fundamental strips,
and as the right extremum we choose $r'\in (d+2, d+2\alpha+2)$. 
Since $\alpha\in (0,1)$, the analytic continuation of $M[h,z]$ and $M[f,1-z]$ have disjoint sets of singularities in $(r,r')$.
In the strip $d-2s<\re z < d+2\alpha+2$, $M[h,z]$ has a single simple pole in $z=d+2\alpha$ and
\begin{align*}
    \res_{z=d+2\alpha}\bra{-\lambda^{-z}M[h,z]M[f,1-z]}
    & =\frac{1}{\lambda^{d+2\alpha}}M[f,1-d-2\alpha]\\
    & =\frac{1}{\lambda^{d+2\alpha}} 
    \frac{\sqrt\pi \Gamma(s+\alpha)\Gamma(-\alpha)\gamhalf{d-2s+2}\gamhalf{d+1}}{2\Gamma(s)\gamhalf{d+2\alpha+2}\gamhalf{d-2s+2-2\alpha}}.
    % \frac{\sqrt \pi\Gamma\pa{\frac{d+1}{2}}\Gamma\pa{\frac{d+2s}{2}}}{2\Gamma(s)}
    %     \cdot \frac{\Gamma(-\alpha)\Gamma(s+\alpha)}{\Gamma\pa{\frac{d+1}{2}-\alpha}\Gamma\pa{\frac{d+1}{2}+s+\alpha}}<0.
\end{align*}
In the chosen strip, $M[f,1-z]$ has a simple pole in $z=d$ with
\begin{equation*}
    \res_{z=d}\bra{-\lambda^{-z}M[h,z]M[f,1-z]}
    = \frac{1}{\lambda^d} \cdot
    \frac{\Gamma\left(d/2\right) \Gamma\left(\alpha\right)}{2 \Gamma\left(d/2+\alpha\right)}\cdot \frac{\sqrt\pi \gamhalf{d+1}}{\gamhalf{d+2}},
\end{equation*}
and another simple pole in $z=d+2$, with
\begin{equation*}
    \res_{z=d+2}\bra{-\lambda^{-z}M[h,z]M[f,1-z]}
    = -\frac{1}{\lambda^{d+2}} \cdot \frac{\gamhalf{d+2} \Gamma(\alpha-1)}{2 \Gamma\left(d/2+\alpha\right)} \cdot \frac{\sqrt\pi (d-2s+2)(s+1) \gamhalf{d+1}}{4\gamhalf{d+4}}
\end{equation*}
(which is a positive constant).
We are left to bound the contribution of the integral on the right-hand side of \eqref{eq:parsevaldiff}, in order to do so we recall the standard asymptotic relation for the Gamma function
\begin{equation}\label{eq:gammaasymp}
    \lim_{|y|\to \infty} \abs{\Gamma(x+iy)} e^{\pi |y|/2} |y|^{1/2-x}=\sqrt{2\pi}
\end{equation}
(the asymptotic relation being uniform for bounded $x\in\R$, see \cite[5.11.9]{Olver2010}).
The meromorphic functions $M[h,z]$, $M[f,1-z]$ do not have poles on $r'+i\R$, $r'\in (d+2,d+2\alpha+2)$, and therefore they are bounded on bounded intervals of that line,
so by \eqref{eq:gammaasymp} we have
\begin{equation*}
    |M[h,r'+iy]|\lesssim_{d,\alpha,s}  (1\vee |y|)^{d/2+\alpha-1}e^{-\pi |y|/2},\quad
    |M[f,1-r'-iy]|\lesssim_{d,\alpha,s} (1\vee |y|)^{2s-2-d},
\end{equation*}
uniformly in $y\in\R$.
As a consequence, we obtain the following bound on the integral on the right-hand side of \eqref{eq:parsevaldiff}:
\begin{equation*}
    \abs{\frac1{2\pi i}\int_{r'-i\infty}^{r'+i\infty} \frac{M[h,z]M[f,1-z]}{\lambda^z} dz}
    =O_{d,\alpha,s}(\lambda^{-r'}),
\end{equation*}
in particular the integral absolutely converges.

We have thus proved that
\begin{equation*}
    J(\lambda)= \frac{1}{\lambda^d} \cdot
    \frac{\Gamma\left(d/2\right) \Gamma\left(\alpha\right)}{2 \Gamma\left(d/2+\alpha\right)}\cdot \frac{\sqrt\pi \gamhalf{d+1}}{\gamhalf{d+2}}
    +\frac{1}{\lambda^{d+2\alpha}} 
    \frac{\sqrt\pi \Gamma(s+\alpha)\Gamma(-\alpha)\gamhalf{d-2s+2}\gamhalf{d+1}}{2\Gamma(s)\gamhalf{d+2\alpha+2}\gamhalf{d-2s+2-2\alpha}}
    +O_{d,\alpha,s}(\lambda^{-d-2}).
\end{equation*}
Recalling now $I(\xi)=\omega_{d-2} |\xi|^{d+2-2s} J(|\xi|)$, we observe that the leading order term cancels exactly with $G(\xi)$,
and the remaining terms of the asymptotic relation we established are those appearing in the thesis.
\end{proof}

\section{Rigorous statements and proofs of the main results}\label{subsec:mainproofs}

We are now ready to present the rigorous versions of our main  results, that were informally stated in the introduction as \cref{thm:main_informal}.

\begin{theorem}\label{thm:main1}
	Let $d\geq 2$, $\alpha\in (0,1)$, $s\in (0,d/2)$ and $W$ be given by \eqref{eq:kraichnan_noise}; let $\rho_0\in L^1\cap L^2$ and let $\rho$ be the unique solution to \eqref{eq:transportnoise}, given by \cref{thm:wellposedness_kraichnan}.
	Then in the sense of distributions, it holds
	\begin{equation}\label{eq:main_ineq1}
	 	\frac{\dd}{\dd t} \EE[\|\rho_t \|_{\dot H^{-s}}^2] + K_{d,\alpha,s}\, \EE[\|\rho_t \|_{\dot H^{-s-\alpha+1}}^2] \leq C_{d,\alpha,s}\, \EE[\|\rho_t \|_{\dot H^{-s}}^2].
    \end{equation}
    where $K_{d,\alpha,s}$, $C_{d,\alpha,s}>0$ are the same constants appearing in \cref{prop:highfreqasymp}.
	As a consequence, for any $\rho_0\in L^1\cap L^2$, $T\in (0,+\infty)$ and $s$ as above, we have
	\begin{equation}\label{eq:main_ineq2}
		\sup_{t\in [0,T]} \EE[\|\rho_t \|_{\dot H^{-s}}^2] + K_{d,\alpha,s} \int_0^T \EE[\|\rho_t \|_{\dot H^{-s-\alpha+1}}^2] \dd t \leq 2 e^{C_{d,\alpha,s} T} \, \EE[\|\rho_0 \|_{\dot H^{-s}}^2].
	\end{equation}
\end{theorem}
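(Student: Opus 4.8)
The engine of the proof is the pointwise flux bound \eqref{eq:mainasymp} from \cref{prop:highfreqasymp}, which I would first rewrite as
\[
F(\xi) \le -K_{d,\alpha,s}\, |\xi|^{2-2\alpha-2s} + C_{d,\alpha,s}\, |\xi|^{-2s},\qquad \xi\neq 0.
\]
Since $2(-s-\alpha+1) = 2-2\alpha-2s$, integrating this against the nonnegative spectral density $\EE[|\hat\rho_t(\xi)|^2]$ turns the two powers of $|\xi|$ into exactly $\|\cdot\|_{\dot H^{-s-\alpha+1}}^2$ and $\|\cdot\|_{\dot H^{-s}}^2$. For $s\in(1-\alpha,d/2)$ one may insert this directly into the identity \eqref{eq:fourier-balance} of \cref{prop:identity_neg_sobolev} and read off \eqref{eq:main_ineq1} on the nose. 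To cover the full range $s\in(0,d/2)$ --- in particular the low-regularity window $s\le 1-\alpha$ not reached by \cref{prop:identity_neg_sobolev} --- the plan is to run the same computation for the viscous equation, where \cref{prop:identity_neg_sobolev_general} supplies the analogous identity \eqref{eq:fourier-balance-viscous} for \emph{every} $s\in(0,d/2)$, and then to send $\nu\to0^+$.

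Concretely, I would fix $\nu>0$, discard the manifestly nonpositive term $-2\nu\,\EE[\|\rho^\nu_t\|_{\dot H^{-s+1}}^2]$ in \eqref{eq:fourier-balance-viscous}, and apply the rewritten flux bound to reach the viscous differential inequality
\[
\frac{\dd}{\dd t}\EE[\|\rho^\nu_t\|_{\dot H^{-s}}^2] + K_{d,\alpha,s}\,\EE[\|\rho^\nu_t\|_{\dot H^{-s-\alpha+1}}^2] \le C_{d,\alpha,s}\,\EE[\|\rho^\nu_t\|_{\dot H^{-s}}^2].
\]
Writing $y_\nu(t):=\EE[\|\rho^\nu_t\|_{\dot H^{-s}}^2]$ and $z_\nu(t):=\EE[\|\rho^\nu_t\|_{\dot H^{-s-\alpha+1}}^2]$, this reads $y_\nu' + K z_\nu \le C y_\nu$. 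Multiplying by $e^{-Ct}$ and integrating gives $y_\nu(t) + K\int_0^t e^{C(t-r)} z_\nu(r)\dd r \le e^{Ct}\,y_\nu(0)$, hence (using $e^{C(t-r)}\ge 1$) $y_\nu(t) + K\int_0^t z_\nu \le e^{CT} y_\nu(0)$ for $t\le T$. Evaluating at $t=T$ bounds the time integral, while dropping the integral bounds the supremum; adding the two produces the factor $2$ and yields the viscous version of \eqref{eq:main_ineq2}, uniformly in $\nu$. Here $y_\nu(0)=\EE[\|\rho_0\|_{\dot H^{-s}}^2]$ is finite because $L^1\cap L^2\hookrightarrow \dot H^{-s}$ for $s\in(0,d/2)$.

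It then remains to pass to the limit $\nu\to0^+$. The terms involving $\dot H^{-s}$ are immediate: \cref{prop:vanishing_viscosity_approximation} gives $\rho^\nu\to\rho$ in $L^2_\omega L^\infty_t \dot H^{-s}$, so $y_\nu\to\EE[\|\rho_\cdot\|_{\dot H^{-s}}^2]$ uniformly in $t$ and thus $\sup_t y_\nu(t)\to \sup_t \EE[\|\rho_t\|_{\dot H^{-s}}^2]$. The delicate point --- and the main obstacle --- is the $\dot H^{-s-\alpha+1}$ term, since $-s-\alpha+1>-s$ and the vanishing-viscosity convergence is only known in the weaker $\dot H^{-s}$ topology. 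I would resolve this by weak compactness plus lower semicontinuity: the uniform-in-$\nu$ viscous bound shows $\{\rho^\nu\}$ is bounded in $L^2_{\omega,t}\dot H^{-s-\alpha+1}$, so along a subsequence it converges weakly there to some $\chi$; testing against Schwartz functions and comparing with the strong $\dot H^{-s}$ limit forces $\chi=\rho$, and weak lower semicontinuity of the norm then gives
\[
\int_0^T\EE[\|\rho_t\|_{\dot H^{-s-\alpha+1}}^2]\dd t \le \liminf_{\nu\to 0}\int_0^T\EE[\|\rho^\nu_t\|_{\dot H^{-s-\alpha+1}}^2]\dd t.
\]
Choosing the subsequence to realize this $\liminf$ and combining with the (uniform) convergence of the $\dot H^{-s}$ terms in the viscous bound yields \eqref{eq:main_ineq2}.

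Finally, the distributional inequality \eqref{eq:main_ineq1} follows from the same scheme: I would test the viscous differential inequality against an arbitrary nonnegative $\phi\in C^\infty_c((0,\infty))$ and let $\nu\to0^+$. The two $\dot H^{-s}$ contributions converge by \cref{prop:vanishing_viscosity_approximation}, while lower semicontinuity controls the good $\dot H^{-s-\alpha+1}$ term in the correct direction (it sits on the left of the inequality with a favourable sign, so passing to the $\liminf$ only helps). This delivers \eqref{eq:main_ineq1} for the full range $s\in(0,d/2)$, the hardest ingredient throughout being precisely the identification of the weak $\dot H^{-s-\alpha+1}$-limit with $\rho$ and the ensuing lower semicontinuity.
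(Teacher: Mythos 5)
Your proposal is correct and follows the same overall architecture as the paper: direct combination of \cref{prop:identity_neg_sobolev} with \cref{prop:highfreqasymp} for $s\in(1-\alpha,d/2)$, and the viscous route via \cref{prop:identity_neg_sobolev_general} plus a vanishing-viscosity limit for the remaining range, with the identical Gr\"onwall step producing \eqref{eq:main_ineq2}. The only point where you genuinely diverge is the passage to the limit $\nu\to0^+$ in the $\dot H^{-s-\alpha+1}$ term. You handle it by weak compactness in $L^2_{\omega,t}\dot H^{-s+1-\alpha}$ (from the uniform viscous bound), identification of the weak limit with $\rho$ through the strong $\dot H^{-s}$ convergence of \cref{prop:vanishing_viscosity_approximation}, and weak lower semicontinuity of the convex functional $f\mapsto\int\phi(t)\,\EE[\|f_t\|^2]\dd t$ --- which suffices exactly because that term sits on the favourable side of the inequality. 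The paper instead interpolates the strong convergence in $L^2_{\omega,t}\dot H^{-s}$ (after a small shift $s\mapsto s-\eps$) against the uniform $L^2_{\omega,t}\dot H^{-\tilde s+1-\alpha}$ bound to upgrade to \emph{strong} convergence $\rho^\nu\to\rho$ in $L^2_{\omega,t}\dot H^{-s+1-\alpha}$, see \eqref{eq:main_proof_eq3}. Your argument is the more elementary and standard one and fully proves the theorem; the paper's stronger mode of convergence is exploited afterwards (taking $s=1-\alpha$ in \eqref{eq:main_proof_eq3}) to deduce strong $L^2_{\omega,t,x}$ convergence of the viscous approximations, a byproduct your route does not yield. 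One small caveat: when identifying the weak limit, test against (dense) functions whose Fourier transform is supported away from the origin, since for $-s-\alpha+1>-s$ there is no inclusion between the two homogeneous spaces; this is routine but worth a line.
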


\begin{proof}
% In the case $s\in (1-\alpha,d/2)$, we apply \cref{prop:identity_neg_sobolev} and \cref{prop:highfreqasymp} and we obtain
% \begin{equation}
%     \frac{\dd}{\dd t} \EE[\|\rho_t \|_{\dot H^{-s}}^2] + K_{d,\alpha,s} \EE[\|\rho_t \|_{\dot H^{-s-\alpha+1}}^2] \leq C_{d,\alpha,s} \EE[\|\rho_t \|_{\dot H^{-s}}^2].
% \end{equation}
% Estimate \eqref{eq:main_ineq2} follows by Gr\"onwall's lemma.
%
% Instead, in the case $s\in (0,1-\alpha]$, we first consider the viscous solution $\rho^\nu$ to \eqref{eq:transportnoise_viscous} and apply \cref{prop:identity_neg_sobolev_general}-\cref{prop:highfreqasymp} to find
% \begin{align*}
%     \frac{\dd}{\dd t} \EE[\|\rho^\nu_t \|_{\dot H^{-s}}^2] + K_{d,\alpha,s} \EE[\|\rho^\nu_t \|_{\dot H^{-s-\alpha+1}}^2] + 2\nu \EE[\|\rho^\nu_t \|_{\dot H^{-s+1}}^2] \leq C_{d,\alpha,s} \EE[\|\rho^\nu_t \|_{\dot H^{-s}}^2];
% \end{align*}
% by Gr\"onwall's lemma we get
% \begin{equation*}
%     \sup_{t\in [0,T]} \EE[\|\rho^\nu_t \|_{\dot H^{-s}}^2] + K_{d,\alpha,s} \int_0^T \EE[\|\rho^\nu_t \|_{\dot H^{-s-\alpha+1}}^2] \dd t \leq 2 e^{C_{d,\alpha,s} T}  \EE[\|\rho_0 \|_{\dot H^{-s}}^2].
% \end{equation*}
% Finally we pass to the limit $\nu\to 0$, aided by \cref{lem:vanishing_viscosity_approximation}, to deduce \eqref{eq:main_ineq2} also for $s\in (0,1-\alpha]$.
%
Once we prove \eqref{eq:main_ineq1}, estimate \eqref{eq:main_ineq2} is an immediate consequence of Gr\"onwall's lemma.

In the case $s\in (1-\alpha,d/2)$, estimate \eqref{eq:main_ineq1} follows from combining \cref{prop:identity_neg_sobolev} and \cref{prop:highfreqasymp}.
Instead, in the case $s\in (0,1-\alpha]$, we first consider the viscous solution $\rho^\nu$ to \eqref{eq:transportnoise_viscous} and apply \cref{prop:identity_neg_sobolev_general}-\cref{prop:highfreqasymp} to find
\begin{equation}\label{eq:main_proof_eq1}
    \frac{\dd}{\dd t} \EE[\|\rho^\nu_t \|_{\dot H^{-s}}^2] + K_{d,\alpha,s} \EE[\|\rho^\nu_t \|_{\dot H^{-s-\alpha+1}}^2] \leq -2\nu \EE[\|\rho^\nu_t \|_{\dot H^{-s+1}}^2] + C_{d,\alpha,s} \EE[\|\rho^\nu_t \|_{\dot H^{-s}}^2];
\end{equation}
notice that, again by Gr\"onwall, this implies the uniform-in-$\nu$ estimate
\begin{equation}\label{eq:main_proof_eq2}
    \sup_{t\in [0,T]} \EE[\|\rho^\nu_t \|_{\dot H^{-s}}^2] + K_{d,\alpha,s} \int_0^T \EE[\|\rho^\nu_t \|_{\dot H^{-s-\alpha+1}}^2] \dd t \lesssim \| \rho_0\|_{\dot H^{-s}}^2.
\end{equation}
 We now want to pass to the limit in \eqref{eq:main_proof_eq1} as $\nu\to 0$, aided by the a priori estimates \eqref{eq:bounds_viscous_kraichnan} and  \cref{prop:vanishing_viscosity_approximation}.
Indeed, notice that by interpolation
\begin{align*}
    2\nu \int_0^T \EE[\|\rho^\nu_t \|_{\dot H^{-s+1}}^2] \dd t
    \lesssim \bigg( 2\nu \int_0^T \EE[\|\rho^\nu_t \|_{\dot H^1}^2] \dd t\bigg)^{1-s} \bigg( 2\nu \int_0^T \EE[\|\rho^\nu_t \|_{L^2}^2] \dd t\bigg)^s
    \lesssim (2\nu)^s T^s \|\rho_0\|_{L^2}^2 \to 0
\end{align*}
as $\nu\to 0^+$, so that the r.h.s. of \eqref{eq:main_proof_eq1} converges strongly in $L^2([0,T])$ to $C_{d,\alpha,s} \EE[\|\rho_t \|_{\dot H^{-s}}^2]$ as $\nu\to 0^+$.
On the other hand, since $\rho^\nu\to \rho$ strongly in $C_t \dot H^{-s}$ and by \eqref{eq:main_proof_eq2} the sequence is bounded in $L^2_{\omega,t} \dot H^{-s+1-\alpha}$, $\rho^\nu \in L^2_{\omega,t} \dot H^{-s+1-\alpha}$ as well. Upon replacing $s$ with $\tilde s=s-\eps>0$ for $\eps$ small enough, we can interpolate between the convergence in $L^2_{\omega,t} \dot H^{-1/2}$ (coming from \cref{prop:vanishing_viscosity_approximation}) and the uniform bound in $L^2_{\omega,t} \dot H^{-\tilde s -\alpha+1}$ to conclude that
\begin{equation}\label{eq:main_proof_eq3}
    \lim_{\nu\to 0^+} \int_0^T \EE\big[\,\| \rho^\nu_t-\rho_t\|_{\dot H^{-s+1-\alpha}}^2\big] = 0 \quad \forall\, s\in (0,d/2).
\end{equation}
All in all, this implies that all terms appearing in \eqref{eq:main_proof_eq1} converge in distribution to their inviscid counterparts and so that inequality \eqref{eq:main_ineq1} holds for $s\in (0,1-\alpha]$ as well.
\end{proof}

\begin{remark}
    Upon taking $s=1-\alpha$, estimate \eqref{eq:main_proof_eq3} in particular implies that
    \begin{equation*}
    \lim_{\nu\to 0^+} \int_0^T \EE\big[\,\| \rho^\nu_t-\rho_t\|_{L^2}^2\big] = 0,
\end{equation*}
    namely viscous solutions $\rho^\nu$ converge strongly to $\rho$ in $L^2_{\omega,t,x}$. This is not trivial, in light of the fact that $\rho$ is not renormalized and anomalous dissipation of energy takes place in the Kraichnan model.
   %  Also notice that, by the same proof, $\rho^\nu$ satisfy the estimate
   %  \begin{equation*}
	 	% \frac{\dd}{\dd t} \EE[\|\rho^\nu_t \|_{\dot H^{-s}}^2] + K_{d,\alpha,s}\, \EE[\|\rho^\nu_t \|_{\dot H^{-s-\alpha+1}}^2] + 2\nu \EE[\|\rho^\nu_t \|_{\dot H^{1-s}}^2] \leq C_{d,\alpha,s}\, \EE[\|\rho^\nu_t \|_{\dot H^{-s}}^2].
   %  \end{equation*}
\end{remark}

We denote by $S:L^1\cap L^2 \to L^\infty_{\omega,t}(L^1\cap L^2)$ the solution map to SPDE \eqref{eq:transportnoise} coming from \cref{thm:wellposedness_kraichnan}, that sends every initial condition $\rho_0\in L^1\cap L^2$ to the corresponding unique solution $\rho=S\rho_0$. As equation \eqref{eq:transportnoise} is linear, so is the map $S$.

\begin{theorem}\label{thm:main2}
	Let $d$, $\alpha$, $s$ and $W$ be as in \cref{thm:main1}.
	Then the solution map $S$ extends uniquely to a linear map from $\dot H^{-s}$ to $L^\infty_t L^2_\omega \dot H^{-s}$; for every $\rho_0\in \dot{H}^{-s}$, the process $\rho = S\rho_0$ is a $\dot{H}^{-s}$-solution to \eqref{eq:transportnoise} (in the sense of \cref{defn:Kraichnan}) and still satisfies \eqref{eq:main_ineq2} (for such fixed value of $s$).

    The process $\rho$ has the properties that, for every $\eps>0$ and every $0<\delta<T$, 
	\begin{equation}\label{eq:arbitrary_gain}
		\int_\delta^T \EE[\|\rho_t\|_{\dot{H}^{1-\alpha-\eps}}^2] \dd t <\infty
	\end{equation}
    and $\PP$-a.s. it holds that
    \begin{equation}\label{eq:arbitrary_gain2}
		\sup_{t\geq \delta} \|\rho_t\|_{L^2} \leq \|\rho_\delta\|_{L^2}<\infty.
	\end{equation}
\end{theorem}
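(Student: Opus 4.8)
The plan is to bootstrap everything from the a priori estimate \eqref{eq:main_ineq2} of \cref{thm:main1}, via a density extension, a restart-and-interpolate argument, and a final $L^2$-energy step; the recurring difficulty is to propagate constants that depend only on $\|\rho_0\|_{\dot H^{-s}}$, so that they survive the density limit. First I would observe that \eqref{eq:main_ineq2} says precisely that $S$ is a bounded linear operator from $(L^1\cap L^2,\|\cdot\|_{\dot H^{-s}})$ into $L^\infty_t L^2_\omega \dot H^{-s}$, with norm at most $(2e^{C_{d,\alpha,s}T})^{1/2}$. Since $C^\infty_c \subset L^1\cap L^2$ is dense in $\dot H^{-s}$ for $0<s<d/2$, $S$ extends uniquely to a bounded linear map $\dot H^{-s}\to L^\infty_t L^2_\omega \dot H^{-s}$, and \eqref{eq:main_ineq2} passes to the extension by lower semicontinuity of the norms. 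To see that $\rho=S\rho_0$ is a $\dot H^{-s}$-solution in the sense of \cref{defn:Kraichnan}, I would approximate $\rho_0$ by $\rho_0^n\in C^\infty_c$ and pass to the limit in \eqref{eq:def_sol} for $\rho^n=S\rho_0^n$: as $\rho^n\to\rho$ in $L^2_{\omega,t}\dot H^{-s}\hookrightarrow L^2_{\omega,t}H^{-s}$, \cref{lem:stoch_int_distributions} yields convergence of the stochastic integrals in $L^2_\omega C_t H^{-s-1}$, while the drift converges in $L^2_\omega C_t H^{-s-2}$ by boundedness of $\Delta:\dot H^{-s}\to H^{-s-2}$; progressive measurability and weak continuity are inherited in the limit.

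For the gain \eqref{eq:arbitrary_gain} the idea is to iterate the regularity gain of order $1-\alpha$ encoded in \eqref{eq:main_ineq2}, restarting the equation at finitely many deterministic times $0<\tau_1<\dots<\tau_N<\delta$. I would run the iteration at the level of approximants $\rho_0^n\in L^1\cap L^2$, where the flow property is available: at a fixed time $\tau$ the shifted process $\rho_{\tau+\cdot}$ solves \eqref{eq:transportnoise} with $\mathcal{F}_\tau$-measurable datum $\rho_\tau\in L^1\cap L^2$ driven by $W_{\tau+\cdot}-W_\tau$, so by pathwise uniqueness (\cref{thm:wellposedness_kraichnan}) it coincides with the solution restarted from $\rho_\tau$, and \eqref{eq:main_ineq2} applies to it (the expectation-based derivation is unchanged for random $\mathcal{F}_\tau$-measurable data). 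The crucial point is to interpolate at each restart: on $[\tau_{j-1},T]$ one has $\rho\in L^\infty_t L^2_\omega \dot H^{-s_{j-1}}\cap L^2_{\omega,t}\dot H^{-s_{j-1}+1-\alpha}$ with norms $\lesssim\|\rho_0\|_{\dot H^{-s}}^2$, so by averaging in time one selects $\tau_j\in(\tau_{j-1},\delta)$ with $\EE[\|\rho_{\tau_j}\|_{\dot H^{-s_{j-1}+1-\alpha}}^2]$ controlled, and then the log-convexity inequality $\|f\|_{\dot H^{-\sigma}}\le\|f\|_{\dot H^{-s_{j-1}}}^{\theta}\|f\|_{\dot H^{-s_{j-1}+1-\alpha}}^{1-\theta}$ with Hölder gives $\EE[\|\rho_{\tau_j}\|_{\dot H^{-\sigma}}^2]\lesssim\|\rho_0\|_{\dot H^{-s}}^2$ for all $\sigma\in[s_{j-1}-(1-\alpha),s_{j-1}]$. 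Choosing $s_j=s_{j-1}-(1-\alpha)$ lowers the base index by $1-\alpha$ per step, so after $N\approx\lceil s/(1-\alpha)\rceil$ restarts it drops below $1-\alpha$; at that final stage the interpolation range opens down to $0$, so one may restart with a small index $\eps$ and apply \eqref{eq:main_ineq2} once more to obtain $\int_{\tau_N}^T\EE[\|\rho_t\|_{\dot H^{1-\alpha-\eps}}^2]\dd t\lesssim_{\delta,\eps,T}\|\rho_0\|_{\dot H^{-s}}^2$ with $\tau_N<\delta$. As this bound depends only on $\|\rho_0\|_{\dot H^{-s}}$, approximating a general $\rho_0\in\dot H^{-s}$ by $L^1\cap L^2$ data and using weak lower semicontinuity in $L^2_{\omega,t}\dot H^{1-\alpha-\eps}$ (identifying the weak limit with $\rho$ through the strong $\dot H^{-s}$-convergence) yields \eqref{eq:arbitrary_gain}.

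For the $L^2$-contraction \eqref{eq:arbitrary_gain2} I would fix $\delta>0$, choose $\delta'\in(0,\delta)$, and first upgrade to $L^2$: combining \eqref{eq:arbitrary_gain} (with some $\eps<1-\alpha$) with $\rho\in L^\infty_t L^2_\omega\dot H^{-s}$ and interpolating—now with $0$ strictly between $-s$ and $1-\alpha-\eps$—shows $\rho_t\in L^2$ for a.e.\ $t>0$ with locally integrable $\EE\|\rho_t\|_{L^2}^2$, so in particular $\rho_{\delta'}\in L^2\cap\dot H^{-s}$ a.s. Next I would establish the flow identity $\rho|_{[\delta',T]}=S_{\delta'}\rho_{\delta'}$ for the extension, where $S_{\delta'}$ denotes the solution operator on $[\delta',T]$ associated to $W_{\delta'+\cdot}-W_{\delta'}$, by passing to the limit in the corresponding identity for $S\rho_0^n$ (valid by \cref{thm:wellposedness_kraichnan}) using the $\dot H^{-s}$-continuity of $S_{\delta'}$. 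Finally I would approximate $\rho_{\delta'}$ by $g^k\in C^\infty_c$ converging both in $L^2$ and in $\dot H^{-s}$: the genuine solutions $S_{\delta'}g^k$ satisfy, by \eqref{eq:Lp_bound} and the flow property, the pathwise monotonicity $\|(S_{\delta'}g^k)_t\|_{L^2}\le\|(S_{\delta'}g^k)_{t'}\|_{L^2}$ for $t\ge t'\ge\delta'$, and by linearity and the pathwise $L^2$-contraction they are a.s.\ Cauchy in $C([\delta',T];L^2)$; their limit inherits a nonincreasing $L^2$-norm and, by $\dot H^{-s}$-continuity of $S_{\delta'}$, equals $\rho$. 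Thus $t\mapsto\|\rho_t\|_{L^2}$ is a.s.\ finite and nonincreasing on $[\delta',T]$, giving \eqref{eq:arbitrary_gain2}.

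The main obstacle is exactly the constant-tracking: a single restart would only bound the gain by $\|\rho_{\delta'}\|_{L^1\cap L^2}$, which is uncontrolled in the density limit, so the whole weight of the argument falls on the interpolation performed at every restart, together with the observation that once the base index drops below $1-\alpha$ the admissible interpolation range reaches down to $0$—this is what makes the bootstrap simultaneously \emph{finite} and \emph{stable} under approximation. A secondary difficulty is that $S\rho_0^n\to\rho$ only in the weak topology of $\dot H^{-s}$, so that deriving the genuinely pathwise statement \eqref{eq:arbitrary_gain2} forces one to first regularize to $L^2$ at a positive time and then re-run an $L^2$-energy argument through the flow property, rather than passing the $L^2$-contraction directly to the limit.
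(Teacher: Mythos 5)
Your proposal is correct and follows essentially the same route as the paper: extend $S$ by density using \eqref{eq:main_ineq2}, pass to the limit in \eqref{eq:def_sol} via \cref{lem:stoch_int_distributions}, bootstrap the $1-\alpha$ regularity gain by restarting at finitely many small times with interpolation, and obtain \eqref{eq:arbitrary_gain2} by restarting once $\rho$ lands in $L^2$ and invoking \eqref{eq:Lp_bound}. The only cosmetic difference is that the paper runs the iteration directly on the extended solution via conditional expectations, whereas you run it on $L^1\cap L^2$ approximants with uniform constants and pass to the limit; both are valid instances of the same argument.
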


\begin{proof}
The unique extension of the map $S$ comes from its linearity and estimate \eqref{eq:main_ineq2}. The fact that, for $\rho_0\in \dot{H}^{-s}$, $S\rho_0$ is a solution to \eqref{eq:transportnoise} follows by a standard approximation procedure: we take $\rho_0^n\in L^1\cap L^2$ converging to $\rho_0$ in $\dot{H}^{-s}$ and we pass to the limit, with respect to the $L^2_\omega C_t H^{-s-2}$ topology, in the equation \eqref{eq:def_sol} for $S\rho^n_0$; the convergence of the stochastic integral in \eqref{eq:def_sol} is a consequence of \cref{lem:stoch_int_distributions}.

Finally \eqref{eq:arbitrary_gain} follows by bootstrapping the regularity gain coming from \eqref{eq:main_ineq2}. Precisely, if we start with $\rho_0 \in \dot{H}^{-s}$, by \eqref{eq:main_ineq2} there exists $t_1$ arbitrarily small such that $\EE[\|\rho_{t_1}\|_{\dot{H}^{-s_1}}^2]<\infty$, where $s_1=s+\alpha-1$. Restarting from $t_1$, by \eqref{eq:main_ineq2} we have
\begin{align}\label{eq:iteration}
    \EE\left[\int_{t_1}^T \|\rho_r\|_{\dot{H}^{-s_2}}^2 \mid \mathcal{F}_{t_1} \dd t \right]= \EE\left[\EE\left[\int_{t_1}^T \|\rho_r\|_{\dot{H}^{-s_2}}^2 \dd t \mid \mathcal{F}_{t_1} \right] \right] \le 2e^{C_{d,\alpha,s_1}T} \EE[\|\rho_{t_1}\|_{\dot{H}^{-s_1}}^2]<\infty,
\end{align}
so there exists $t_2$ arbitrarily small such that $\EE[\|\rho_{t_2}\|_{\dot{H}^{-s_2}}^2]<\infty$, where $s_2=s+2(\alpha-1)$. Iterating this procedure a finite number of times (until $-s_n$ becomes non-negative), and interpolating with $\EE[\|\rho_t\|_{\dot{H}^{-s}}^2]<\infty$ for every $t$, we obtain that, for every $\eps>0$, there exists $t_n$ arbitrarily small (in particular, $t_n\leq \delta$) such that $\EE[\|\rho_{t_n}\|_{L^2}^2]<\infty$ and $\EE[\|\rho_{t_n}\|_{\dot{H}^{-\eps}}^2]<\infty$; applying once more \eqref{eq:iteration} with $s_1$ replaced by $\eps$ and $t_1$ replaced by $t_n$, we reach \eqref{eq:arbitrary_gain}.

By the same argument (and possibly interpolating between $\dot H^{-\alpha}$ and $\dot H^\beta$), we can find $\tau\leq \delta$ small enough such that $\EE[\|\rho_\tau\|_{L^2}^2]<\infty$. In this case, restarting from $\tau$, estimate \eqref{eq:Lp_bound} guarantees that $\rho_t\in L^2$ for all $t\geq \tau$; restarting from $\delta\geq \tau$ then yields \eqref{eq:arbitrary_gain2}.
% The claim that solutions become instantaneously $L^2$ comes from bootstrapping the regularity gain coming from \eqref{eq:main_ineq1}-\eqref{eq:main_ineq3}: if we start at $-s$, then we immediately reach $-s+\alpha-1$, and we can restart from there and iterate to reach $-s_j=-s+j(\alpha-1)$ in arbitrarily small time, as long as $s_j>0$. In the final step we reach $L^2$ by applying \eqref{eq:main_ineq3}. Once we have found $t>0$ such that $\rho_t\in L^2$, restarting the dynamics from there will imply the solution being $L^2$ at all subsequent times.
\end{proof}

% \blue{L: concerning the question whether in the negative regularity regime the solutions defined in this way honestly satisfy the SPDE, see the new \cref{lem:stoch_int_distributions}. In order to define
% \begin{align*}
%     \int_0^\cdot \nabla \rho_r\cdot \dd W_r
% \end{align*}
% as a $C_t H^{-\beta-1}$-valued process, it suffices to require that $\rho_t\in L^2_{\omega,t} H^{-\beta}$ with $ \beta<d/2+\alpha$. 
% In our setting, if $\rho_0\in \dot H^{-s}$, then the solution belongs to $L^2_{\omega,t} \dot H^{1-s-\alpha}$ and so the above condition is satisfied as soon as
% \begin{align*}
%     1-s-\alpha>-\frac{d}{2}-\alpha
%     \quad \Leftrightarrow \quad
%     s<1+\frac{d}{2}
% \end{align*}
% which is always the case in the assumptions of our theorem. In other words, the candidate solutions we obtain by the extension procedure genuinely satisfy the SPDE in a weak sense (which also means their trajectories must lie in $C_t H^{-k}$ for sufficiently large $k$).
% }

\section{On the statistically self-similar rough Kraichnan case}\label{sec:self-similar}

As already mentioned in the introduction, in the Physics literature authors have often restricted themselves to the \emph{statistically self-similar} Kraichnan noise. This formally amounts to replacing the noise $W$ in the SPDE \eqref{eq:transportnoise} with $\tilde W$, whose covariance $\tilde Q$ has Fourier transform given by (compare with \eqref{eq:kraichnan_noise})
\begin{align*}
    \widehat{\tilde Q}(\xi) = \frac{1}{|\xi|^{d+2\alpha}} P^\perp_\xi.
\end{align*}
Although idealized, self-similar noise has the advantage of often allowing to perform explicit computations by enforcing a self-similar Ansatz, while leaving the small-scale structure of the advected scalar roughly unaffected.

More rigorously, in order to study solutions $\tilde \rho$ to the SPDE \eqref{eq:transportnoise} associated to $\tilde W$, one should perform an approximation procedure, by first considering solutions $\rho^m$ to the SPDE
\begin{equation}\label{eq:transport_equation_m}
    \dd \rho^m +\circ \dd W^m\cdot\nabla \rho^m = 0, \quad \rho^m\vert_{t=0}=\rho_0
\end{equation}
with noises $W^m$ whose covariance $Q^m$ is determined by
\begin{align*}
    \widehat{Q}^m(\xi) = \frac{1}{(m^2 + |\xi|^2)^{d/2+\alpha}} P^\perp_\xi,
\end{align*}
and then passing to the limit as $m\to 0^+$.
This limit however is far from trivial, as the SPDEs \eqref{eq:transport_equation_m} do not make sense after the limit procedure: when writing them in It\^o form (cf. \eqref{eq:transportnoise}), the It\^o--Stratonovich corrector ${\rm Tr}\,(Q^m(0))\Delta \rho^m$ does not have a limit in the sense of distributions, since ${\rm Tr}\,(Q^m(0))\to +\infty$ as $m\to 0^+$. In other contexts, this can lead to triviality results, i.e. the limit solutions $\rho$ being spatially constant, cf. \cite[Thm. 4.1]{FGL2021}.

Nevertheless, many quantities of interest (like the evolution of $\dot H^{-s}$-norms, Lyapunov exponents, or the generator of the associated two-point motion \cite{CaFaGa2008,zelati2023statistically}) are not governed by $Q^m(0)$, but rather by the functions $x\mapsto Q^m(0)-Q^m(x)$, which admit a pointwise limit:
\begin{equation}\label{eq:covariance_selfsimilar}
    \lim_{m\to 0} (Q^m(0)-Q^m(z)) = D_{\alpha,d}\, |z|^{2\alpha} \Big[ I_d + \frac{2\alpha}{d-1} P^\perp_z\Big]\quad \forall\, z\in\R^d
\end{equation}
for a suitable finite constant $D_{\alpha,d}$.

In light of the above facts, it is of interest to understand what happens to the evolution of Sobolev norms in the self-similar case, i.e. in the limit $m\to 0^+$.
Since we do not have a well-defined limit for the SPDEs \eqref{eq:transport_equation_m}, the content of this section is partially heuristical, thus we will refrain from giving fully rigorous statements.
We will simply assume that, for fixed deterministic $\rho_0\in L^1\cap L^2$, the solutions $\rho^m$ to \eqref{eq:transport_equation_m} converge in suitable topologies to some limit $\tilde\rho$, which we will somewhat regard as a solution to the transport SPDE driven by the statistically self-similar noise $\tilde W$ of parameter $\alpha\in (0,1)$. The results obtained, albeit formal, are still informative when regarded as ideal limits of more practical cases.

In analogy to \eqref{eq:fourier-balance}-\eqref{eq:functions_Fn}, going through the same proof as in \cref{prop:identity_neg_sobolev}, one finds
\begin{equation}\label{eq:fourier-balance_m}
    \frac{\dd}{\dd t} \EE\big[\| \rho^m_t\|_{\dot H^{-s}}^2\big]
    = (2\pi)^{-d/2} \int_{\R^d} F^m(\xi)\, \EE\big[|\widehat{\rho}^m_t(\xi)|^2\big] \dd \xi
\end{equation}
for the flux functions $F^m$ given by
\begin{equation}\label{eq:functions_Fn_m}
        F^m(\xi):= \int_{\R^d} \frac{1}{(m^2 + |\eta|^2)^{d/2+\alpha}} |P_{\eta}^\perp \xi|^2
    \Big(\frac{1}{|\eta-\xi|^{2s}}-\frac{1}{|\xi|^{2s}}\Big) \dd \eta.
\end{equation}
A simple rescaling $\eta=m\eta'$ reveals the identity
\begin{align*}
    F^m(\xi) = m^{2-2s-2\alpha} F\Big(\frac{\xi}{m}\Big);
\end{align*}
once we plug it in \eqref{eq:mainasymp}, we obtain 
\begin{equation}\label{eq:functions_Fn_m_limit}
    |F^m(\xi) + K_{d,\alpha,s} |\xi|^{2-2\alpha-2s} | \leq C_{d,\alpha,s} m^{2-2\alpha} |\xi|^{-2s} \quad \forall \xi\in\R^d\setminus\{0\},\, m>0.
\end{equation}
In particular, $F^m(\xi)$ converge to $-K_{d,\alpha,s} |\xi|^{2-2\alpha-2s}$ as $m\to 0^+$, uniformly outside neighborhoods of $0$.
As a consequence, passing to the limit as $m\to 0$ in \eqref{eq:fourier-balance_m}, we expect the evolution of negative Sobolev norms to solutions $\tilde \rho$ transported by the $\alpha$-self-similar Kraichnan noise $\tilde W$ to be given exactly by
\begin{equation}\label{eq:energy-balance-selfsimilar}
    \frac{\dd}{\dd t} \EE\big[\| \tilde \rho_t\|_{\dot H^{-s}}^2\big]
    = - K_{d,\alpha,s} \EE\big[\| \tilde \rho_t\|_{\dot H^{1-\alpha-s}}^2\big]
\end{equation}
for any $s\in (0,d/2)$.

\begin{remark}
    A posteriori, the limit $m\to 0^+$ provides another way to compute the leading constant $K_{d,\alpha,s}$ from \cref{prop:highfreqasymp}: by \eqref{eq:functions_Fn_m}-\eqref{eq:functions_Fn_m_limit}, it holds
    \begin{align*}
        K_{d,\alpha,s} |\xi|^{2-2\alpha-2s}
        = (2\pi)^{-d/2} \int_{\R^d} \frac{1}{|\eta|^{d+2\alpha}} |P_{\eta}^\perp \xi|^2 \Big(\frac{1}{|\eta-\xi|^{2s}}-\frac{1}{|\xi|^{2s}}\Big) \dd \eta,
    \end{align*}
    so that after rescaling and rotating one finds
    \begin{align*}
        K_{d,\alpha,s} = (2\pi)^{-d/2} \int_{\R^d} \frac{1}{|\eta|^{d+2\alpha}} |P_{\eta}^\perp e_1|^2 \Big(1 -\frac{1}{|\eta-e_1|^{2s}}\Big) \dd \eta.
    \end{align*}
\end{remark}

\begin{remark}[Energy decay]
    Upon integrating \eqref{eq:energy-balance-selfsimilar} on $(0,+\infty)$, for any deterministic $\rho_0\in L^1\cap L^2$ one finds
    \begin{equation}\label{eq:anomalous_dissipation_selfsimilar}
        \int_0^{+\infty} \EE[\| \tilde \rho_t\|_{L^2}^2] \dd t = \frac{1}{\tilde K_{d,\alpha}} \| \rho_0\|_{\dot H^{\alpha-1}}^2.
    \end{equation}
    Eq. \eqref{eq:anomalous_dissipation_selfsimilar} reveals in a more quantitative way the anomalous dissipation property of rough Kraichnan noise. In the statistically self-similar case, not only $\| \tilde\rho_t\|_{L^2}^2$ is not preserved by the dynamics, but it also has to decay sufficiently fast so as to make the integral in \eqref{eq:anomalous_dissipation_selfsimilar} finite. This is consistent with \cite{EyiXin2000}, predicting an asymptotic decay for $\EE[\| \tilde\rho\|_{L^2}^2]$ of order at least $t^{-1-\frac{d}{2-2\alpha}}$.
\end{remark}

\begin{remark}\label{rem:selfsimilar_forcing}
    Similarly to \cite{zelati2023statistically}, one could consider the SPDE with diffusion and an additive Brownian forcing $B_t=\sum_k q_k B^k_t$ independent of $\tilde W$, formally given by
    \begin{equation}\label{eq:selfimilar_SPDE_forcing}
        \dd \tilde\rho_t + \circ \dd \tilde W_t\cdot \nabla \tilde\rho_t = \nu \Delta \tilde\rho_t \dd t +  \dd B_t.
    \end{equation}
    Going through the same computations yielding \eqref{eq:energy-balance-selfsimilar}, one then finds
    \begin{equation}\label{eq:balance_selfsimilar_forcing}
        \frac{\dd}{\dd t} \| \tilde\rho_t\|_{\dot H^{-s}}^2 = - K_{d,\alpha,s} \EE [\| \tilde\rho_t\|_{\dot H^{1-s-\alpha}}^2] - 2\nu\, \EE [\| \tilde\rho_t\|_{\dot H^{1-s}}^2] + \sum_k \| q_k\|_{\dot H^{-s}}^2 
    \end{equation}
    which is in line with the case $\alpha=1$ treated in \cite{zelati2023statistically}.
    The Laplacian $\Delta$ may also be replaced by other operators (e.g. fractional Laplacian), yielding similar relations.    
\end{remark}

\begin{remark}[Invariant measures]\label{rem:invariant_measures}
    In the setting of \cref{rem:selfsimilar_forcing}, relation \eqref{eq:balance_selfsimilar_forcing} and the Krylov-Bogoliubov suggest the existence of an invariant measure $\mu^\nu$ for \eqref{eq:selfimilar_SPDE_forcing} such that
    \begin{equation*}
    K_{d,\alpha,s} \EE_{\mu^\nu} [\| \tilde\rho_t\|_{\dot H^{1-s-\alpha}}^2] + 2\nu\, \EE_{\mu^\kappa} [\| \tilde\rho_t\|_{\dot H^{1-s}}^2] = C_s \quad \forall\, s\in (0,d/2)
    \end{equation*}
    Sending $\nu\to 0^+$, one would expect the existence of an invariant measure $\mu$ for the inviscid, self-similar forced Kraichnan model such that
    \begin{equation*}
    K_{d,\alpha,s}\, \EE_{\mu} [\| \tilde\rho_t\|_{\dot H^{1-s-\alpha}}^2] = C_s \quad \forall\, s\in (0,d/2).
    \end{equation*}
    In particular, $\mu$ is supported on $H^{1-s-}$, which matches Kraichnan's original prediction \cite{Kraichnan1968} on the second order structure function at equilibrium scaling like $S_2(r)\sim r^{2-2\alpha}$ (see also \cite[eq. (296)]{MajKra1999}).

    Let us stress however that all of the above is still mathematically very formal.
    In the Physics literature, the existence of a unique statistically stationary state for \eqref{eq:selfimilar_SPDE_forcing}, which is approached at long times by all solutions with sufficient spatial decay, is often taken for granted, cf. \cite[Sec. 4.3.1.2]{MajKra1999}.
    %We leave the problem of rigorously establishing existence and uniqueness of invariant measures for the (non-self similar!) Kraichnan model for future investigations.
    Recently, rigorous results for the invariant measure of the (non-self-similar) Kraichnan model in the torus $\T^d$ have been established in \cite{Rowan2023}.
\end{remark}

\appendix

\section{Formal derivation in the self-similar case}\label{app:formal_derivation}

%..............................

The goal of this appendix is to present a much shorter, very formal, derivation of the evolution of negative Sobolev norms in Kraichnan.
This is accomplished by running computations at the Eulerian level, in the statistically self-similar case, in which one obtains
\begin{equation}\label{eq:app_exact_balance}
    \frac{\dd}{\dd t} \EE[\|\rho_t \|_{\dot H^{-s}}^2] + K_{d,\alpha,s}\, \EE[\|\rho_t \|_{\dot H^{-s-\alpha+1}}^2] = 0 
\end{equation}
for the same constant $K_{d,\alpha,s}$ appearing in \eqref{eq:main_ineq1}. Let us stress that, for statistically self-similar $W$, the SPDE \eqref{eq:intro_kraichnan} is not even well-defined; the computations below should be regarded as a tool for driving the intuition, momentarily disregarding the more technical and rigorous mathematical details.

Recall the following general fact: given a smooth convolution kernel $G$, the evolution of the associated quadratic quantity along the inviscid Kraichnan SPDE \eqref{eq:intro_kraichnan} in expectation is given by
\begin{equation}\label{eq:app_kernel_balance}\begin{split}
    \frac{\dd}{\dd t} \mathbb{E} [\langle G \ast \rho_t, \rho_t \rangle]
    & =\mathbb{E} \int_{\mathbb{R}^d \times \mathbb{R}^d} {\rm Tr} ((Q (0) - Q (x  - y)) D^2 G (x - y)) \rho_t (x) \rho_t (y) \dd x \dd y\\
    & =\mathbb{E} [\langle \mathcal{H} \ast \rho_t, \rho_t \rangle],
\end{split}\end{equation}
where $Q$ is the covariance function of $W$ determined by \eqref{eq:covariance_W} and the new kernel $\mathcal{H}$ is defined by
\begin{align*}
    \mathcal{H} (z) = (Q (0) - Q (z)) : D^2 G (z),
\end{align*}
where $A:B$ denotes the Frobenius inner product for matrices, i.e. $A:B={\rm Tr} (A^T B) = \sum_{i,j} A_{ij} B_{ij}$. For alternative rigorous justifications of \eqref{eq:app_kernel_balance}, see \cite[Lem. 4.1]{coghi2023existence}, \cite[Lem. 1]{zelati2023statistically} or \cite[Lem. 4.4]{bagnara2024regularization}.
Recall that in the statistically self-similar case, up to the multiplicative constant $D_{\alpha,d}$ (set equal to $1$ here for simplicity), by \eqref{eq:covariance_selfsimilar} we have
\begin{align*}
    Q (0) - Q (z) = | z |^{2 \alpha} \left( P^{\|}_z + \left( 1 + \frac{2
   \alpha}{d - 1} \right) P^{\perp}_z \right).
\end{align*}
For $s\in (0,d/2)$, let $G^s (z) := |z|^{2s-d}$, which is the Riesz potential associated to $(-\Delta)^{-s}$ up to a multiplicative constant; it holds
\begin{align*}
    D^2 G^s (z) = (2s-d) |z|^{2s-d-2}\, \big[ (2s-d-1) P^{\|}_z +
   P^{\perp}_z\big].
\end{align*}
Using the relations
\begin{align*}
    P^{\|}_z : P^{\|}_z = 1, \quad
    P^{\perp}_z : P^{\perp}_z = d - 1, \quad
    P^{\|}_z : P^{\perp}_z = 0,
\end{align*}
we find the new kernel $\mathcal{H}^s$ associated to $G^s$ to be given by
\begin{equation}\label{eq:formula.H}
    \mathcal{H}^s(z) = 2 (2s-d) (s+\alpha-1) |z|^{2s+2\alpha-2-d} 
    %= 2 (2s-d) (s+\alpha-1) G_{s+\alpha-1}(z)
\end{equation}
Balance \eqref{eq:app_kernel_balance} rigorously holds for regular $G$ and non-self-similar noise, but formally assuming it extends to this setting, one would find
\begin{equation}\label{eq:app_proto_balance}
    \frac{\dd}{\dd t} \mathbb{E} [\langle G^s \ast \rho_t, \rho_t \rangle]
    = -2 (d-2s) (s+\alpha-1)\, \mathbb{E}\bigg[ \int_{\mathbb{R}^d \times
  \mathbb{R}^d} | x - y |^{2 s + 2 \alpha - 2 - d} \rho_t (x) \rho_t (y)
  \dd x \dd y \bigg].
\end{equation}
To deduce \eqref{eq:app_exact_balance} from \eqref{eq:app_proto_balance}, we need to distinguish two regimes.

%
% CASE 1
%
\textbf{Case 1:  $s+\alpha-1>0$.}
Here we can identify $\mathcal{H}^s$ as a multiple of $G^{s+\alpha-1}$.
Recalling that
\begin{equation}\label{eq:app_riesz_relation}
    \| \varphi\tilde\|_{\dot H^s}^2 := \langle G^s\ast \varphi, \varphi\rangle
    = \pi^{d/2}\, 2^{2s}\, \frac{\Gamma(s)}{\Gamma(d/2-s)} \| \varphi\|_{\dot H^s}^2,
\end{equation}
one finds the balance
\begin{equation}\label{eq:app_balance_case1}
    \frac{\dd}{\dd t} \mathbb{E} \big[\| \rho_t\tilde\|_{\dot H^s}^2\big]
    + 2(d-2s)(s+\alpha-1) \mathbb{E} \big[\| \rho_t \tilde\|_{\dot{H}^{1-\alpha-s}}^2\big] = 0.
\end{equation}
The constant $K_{\alpha,s,d}$ can then be recovered by using the relations \eqref{eq:app_riesz_relation}. Notice that for $\alpha=1$, this computation recovers the result from \cite{zelati2023statistically}, which implies exponential decay of negative Sobolev norms (in expectation).

%
% CASE 2
%
\textbf{Case 2:  $s+\alpha-1<0$.} Here $\mathcal{H}^s$ is not a Riesz kernel anymore and has a non-integrable singularity in $0$; passing directly to \eqref{eq:app_proto_balance} is too formal and one needs to be more careful, arguing by approximations.
Consider a regularization $G^{s,\varepsilon}$ of $G^s$ (e.g. by cutting off the kernel close to the origin), such that $G^{s,\varepsilon}$ is a smooth, radial function.
Then $D^2 G^{s,\varepsilon}$ is also even and so must be
\begin{align*}
    \mathcal{H}^{s,\varepsilon} := ((Q (0) - Q (z)) : D^2 G^{s,\varepsilon} (z) .
\end{align*}
%For $\varepsilon > 0$, we still have the balance
%
%\begin{align*}
%  \frac{\dd}{\dd t} \mathbb{E} [\langle G^{\varepsilon} \ast \rho_t,
%  \rho_t \rangle] & =\mathbb{E} [\langle H^{\varepsilon} \ast \rho_t, \rho_t
%  \rangle] .
%\end{align*}
%
Since $G^s(z) = |z|^{2s-d}$, so that
$| \cH^s (z) | \lesssim |z|^{2s+2\alpha-2-d}$;
the approximations $G^{s,\varepsilon}$ can be constructed in such a way that we still have the uniform-in-$\varepsilon$ pointwise bounds
\begin{align*}
    G^{s,\eps}(z)\lesssim  |z|^{2s-d}, \quad
    |\nabla G^{s,\eps}(z)|\lesssim  |z|^{2s-d-1}, \quad
    |\cH^{s,\varepsilon}(z)| \lesssim |z|^{2s+2\alpha-2-d}.
\end{align*}
Notice moreover that, since $Q$ is divergence-free and $s+\alpha<1$, the kernel $\cH^{s,\varepsilon}$ is mean-free: by the divergence theorem
\begin{align*}
    \left| \int_{\mathbb{R}^d} \cH^{s,\eps}(z) \dd z \right|
    & = \lim_{R \rightarrow \infty} \left| \int_{B_R} \nabla \cdot ((Q (0) - Q (z)) \nabla G^{s,\varepsilon} (z)) \dd z \right|\\
    & \leq  \lim_{R \rightarrow \infty} \int_{\partial B_R} | ((Q (0) - Q (z)) \nabla G^{s,\varepsilon} (z))| \dd z
    \lesssim \lim_{R \rightarrow \infty} R^{d-1} R^{2\alpha} R^{2s-d-1} = 0.
\end{align*}
Combining the even symmetry and mean-free properties of $\cH^{s,\eps}$, we get the following cancellations:
\begin{equation}\label{eq:app_cancellation}\begin{split}
  \langle \cH^{s,\varepsilon} \ast \rho_t, \rho_t \rangle & = \int_{\mathbb{R}^{d
  \times d}} H^{\varepsilon} (x - y) \rho_t (x) \rho_t (y) \dd x \dd y\\
  & = \int_{\mathbb{R}^{d \times d}} \cH^{s,\varepsilon} (x - y) (\rho_t (x) -
  \rho_t (y)) \rho_t (y) \dd x \dd y\\
  & = - \frac{1}{2} \int_{\mathbb{R}^{d \times d}} \cH^{s,\varepsilon} (x - y) |
  \rho_t (x) - \rho_t (y) |^2 \dd x \dd y.
\end{split}\end{equation}
If $\rho$ is regular enough (e.g. $\rho \in L^2_t H^{1-\alpha-}$, which a posteriori is indeed true, or arguing by approximation with the viscous SPDE), we can then pass to the limit as $\varepsilon \rightarrow 0^+$, combining formulas \eqref{eq:app_kernel_balance}, \eqref{eq:formula.H} and \eqref{eq:app_cancellation}, to find
\begin{align*}
  \frac{\dd}{\dd t} \mathbb{E} [\| \rho_t \tilde\|_{\dot{H}^{- s}}^2]
  = - (d-2s) (1-\alpha-s) \mathbb{E} \left[ \int_{\mathbb{R}^{d \times d}}   \frac{| \rho_t (x) - \rho_t (y) |^2}{| x - y |^{d + 2 (1 - \alpha - s)}} \dd x \dd y \right].
  %\\ & = - (d - 2 s) (1 - \alpha - s) \mathbb{E} [\| \rho_t \|_{\dot{H}^{1 - \alpha - s}}^2]
\end{align*}
The right-hand side is now a Gagliardo--Nirenberg seminorm which is equivalent to $\| \rho_t\|_{\dot H^{1-\alpha-s}}$, providing the desired conclusion \eqref{eq:app_kernel_balance} also in this case.

%----------------------------------------------
\begin{acknowledgements}
    LG was supported by the SNSF Grant 182565 and by the Swiss State Secretariat for Education, Research and lnnovation (SERI) under contract number MB22.00034 through the project TENSE. LG and MM acknowledge support from the Italian Ministry of Research through the project PRIN 2022 ``Noise in fluid dynamics and related models'', project number I53D23002270006. MM acknowledges support from Istituto Nazionale di Alta Matematica (INdAM) 
    through the project GNAMPA 2024 ``Fluidodinamica stocastica'', project number E53C23001670001. LG, FG and MM are members of Istituto Nazionale di Alta Matematica (INdAM), group GNAMPA.

    The authors are grateful to Theodore Drivas and Umberto Pappalettera for several fruitful and stimulating discussions on the topic.
    We thank Marco Bagnara for raising an interesting point concerning Remark \ref{rem:optimality}-ii).
\end{acknowledgements}
%----------------------------------------------

\end{document}